\newcommand{\cx}{{\mathbb{C}}}
\newcommand {\Q}{\mathcal Q}
\newtheorem{thm}{Theorem}[section]
\newtheorem{propos}[thm]{Proposition}
\newtheorem{corol}[thm]{Corollary}
\theoremstyle{definition}
\newtheorem{dfn}[thm]{Definition}
\newtheorem{rema}[thm]{Remark}
\newtheorem{lem}[thm]{Lemma}
\newcommand{\im}{\ensuremath{\mbox{\rm Im}\,}}
\newcommand{\re}{\ensuremath{\mbox{\rm Re}\,}}
\newcommand{\CC}[1]{\mathbb{C}^{#1}}
\newcommand{\CP}[1]{\mathbb{CP}^{#1}}
\newcommand{\RR}[1]{\mathbb{R}^{#1}}
\newcommand{\dw}{\frac{\partial}{\partial w}}
\newcommand{\dz}{\frac{\partial}{\partial z}}
\newcommand{\lr}{\longrightarrow}
\numberwithin{equation}{section}
\title[Divergent CR-Equivalences]{Divergent CR-Equivalences and Meromorphic Differential Equations}
\author {I. Kossovskiy}
\address{Department of Mathematics, University of Vienna}
\email{ilya.kossovskiy@univie.ac.at}
\author {R. Shafikov}
\address{Department of Mathematics, The University of Western Ontario, London, Ontario N6A 5B7 Canada}
\email{shafikov@uwo.ca}
\begin{document}

\date{\today}

\begin{abstract}
Using the analytic theory of differential equations, we construct
examples of formally but not holomorphically equivalent
real-analytic Levi nonflat hypersurfaces in $\CC{n}$ together with
examples of such hypersurfaces with divergent formal
CR-automorphisms.
\end{abstract}

\maketitle

\tableofcontents

\section{Introduction}

Let $M, M'$ be two smooth real-analytic generic CR-submanifolds in
$\CC{N},\,N\geq 2$, passing through the origin (in what follows we
assume all CR-submanifolds to be generic). The germs $(M,0)$ and
$(M',0)$ of these hypersurfaces at the origin are called \it
holomorphically equivalent, \rm if there exists a germ of an
invertible holomorphic mapping $F:\,(\CC{N},0)\lr(\CC{N},0)$,
called \it a holomorphic equivalence between $(M,0)$ and $(M',0)$,
\rm such that $F(M)\subset M'$. Starting with the celebrated
papers of Poincare\,\cite{poincare}, E.~Cartan\,\cite{cartan},
Tanaka\,\cite{tanaka}, Chern and Moser\,\cite{chern} the
holomorphic equivalence problem for real submanifolds in complex
spaces has been intensively studied. In particular, the following
remarkable fact, demonstrating the difference between complex
analysis in one and several variables, was discovered
in~\cite{chern}. To describe it, we give a few definitions. \it
The type \rm of a CR-submanifold is the pair $(n,k)$, where $n$ is
the CR-dimension and $k$ is the CR-codimension of $M$. A
submanifold of type $(n,k)$ is generic if $N=n+k$. A \it formal
mapping $F:\,(\CC{N},0)\lr(\CC{N},0)$ \rm is an $N$-tuple of
formal power series in $N$ variables without a constant term.  If
$(M,0)$ and $(M',0)$ are the germs at the origin of two smooth
real-analytic CR submanifolds of type $(n,k)$, given by the
defining equations $\theta(z,z)=0$ and $\theta'(z,z)=0$
respectively, we say that $(M,0)$ and $(M',0)$ are \it formally
equivalent, \rm if there exists a formal invertible mapping
$F:\,(\CC{N},0)\lr(\CC{N},0)$, called \it a formal equivalence
between $(M,0)$ and $(M',0)$, \rm and a $k\times k$ matrix-valued
formal power series $\lambda(z,\bar z)$ with an invertible
constant term such that $\theta'\left(F(z),\bar F(\bar
z)\right)=\lambda(z,\bar z)\cdot\theta(z,\bar z)$. Holomorphic
equivalence of hypersurfaces obviously implies that in the formal
category. In the other direction, the convergence of the
Chern-Moser~\cite{chern} normalizing transformation for real
hypersurfaces implies

\smallskip

{\it If two real-analytic hypersurfaces $M,M'\subset \cx^N$ are
Levi nondegenerate, then any formal equivalence between them is in
fact convergent.}

\smallskip

The convergence problem for formal mappings between real
submanifolds is closely related to the problem of analyticity of
smooth CR-mappings (see \cite{ber}). An additional motivation
comes from the fact that, if one has the convergence phenomenon
for some class of CR-submanifolds established, then even a \it
formal \rm normal form  solves the holomorphic equivalence problem
for this class of CR-submanifolds.

Starting with the work of Baouendi, Ebenfelt and Rothschild
\cite{ber0},\cite{ber1} where essential generalizations of the
Chern-Moser convergence phenomenon were obtained, a large number
of papers has been dedicated to the investigation of the
convergence phenomenon. To outline currently known results we
recall that a real-analytic submanifold $M\subset\CC{N}$ is called
\it holomorphically nondegenerate at a point $p\in M$, \rm if it
is not locally holomorphically equivalent near $p$ to a product of
a positive-dimensional complex space and a real submanifold of
smaller dimension (see also \cite{ber} for an alternative
definition). $M$ is called \it minimal at a point $p\in M$, \rm if
there is no germ at $p$ of a proper real submanifold $T\subset M$
of the same CR-dimension (see \cite{tumanov}). Note that
holomorphic nondegeneracy, as well as minimality at a generic
point $q\in M$, are clearly necessary for convergence of all
formal automorphisms of $(M,0)$, and that if $M\subset\CC{N}$ is a
hypersurface, then minimality at $p\in M$ is equivalent to
nonexistence of a complex hypersurface $X\subset M$ passing
through $p$. The study of convergence in the minimal case was
completed in the work of Baouendi, Mir and Rothschild~\cite{bmr},
who proved that formal equivalences between real-analytic minimal
holomorphically nondegenerate CR-submanifolds in complex spaces
always converge. In the context of automorphisms the result was
also obtained by Juhlin and Lamel in \cite{jl1}. In the nonminimal
case, Juhlin and Lamel~\cite{jl2} showed convergence of formal
equivalences for 1-nonminimal hypersurfaces in $\CC{2}$. The main
technical tool employed in the proofs of these positive results is
the finite jet determination phenomenon, discovered in \cite{ber0}
and further developed in subsequent publications (see, e.g.,
\cite{ber1}, \cite{lm},\cite{jl1} and references therein).
Nevertheless, the general question of convergence of formal
equivalences between merely holomorphically nondegenerate
hypersurfaces remained open (see, e.g., \cite{bmr}). In
particular, it was conjectured in \cite{bmr} that the groups
$\mbox{Aut}\,(M,p)$ and $\mathcal F(M,p)$ of respectively
holomorphic and formal self-equivalences of the germ $(M,p)$
 must coincide for a holomorphically nondegenerate hypersurface.

A related question, which remained open for  any type
$(n,k),\,n,k>0$, can be formulated as follows: does the local
holomorphic classification of real-analytic CR-manifolds of type
$(n,k)$ (in particular, real hypersurfaces) coincide with the
formal classification? Other notable results in this direction
were obtained by Moser and Webster \cite{wm}, and by Gong
\cite{gong3}, who found examples of formally but not
holomorphically equivalent real surfaces in $\CC{2}$ near complex
points. We remark that complex points constitute {\it
CR-singularities}, and so such surfaces do not fall into the
category of CR manifolds. Similar results for Lagrangian
submanifolds in $\CC{2}$ are contained in Webster
\cite{websterlagrangian} and Gong \cite{gong2}.

The main results of this paper give the negative answer to both
stated questions and the conjecture in \cite{bmr}. To formulate
the results precisely we need the following definition. Let
$M\subset\CC{2}$ be a real-analytic nonminimal at the origin Levi
nonflat hypersurface (the latter condition means that $M$ is
holomorphically nondegenerate for $M\subset\CC{2}$). Then in
suitable coordinates $(z,w)\in\CC{2}$ near the origin (see, e.g.,
\cite{ebenfelt}) $M$ can be represented by a defining equation
\begin{equation}\label{nonminimality}
\im w = (\re w)^m\Phi(z,\bar z,\re w),
\end{equation}
where the power series $\Phi(z,\bar z,\re w)$  contains no
pluriharmonic terms and also $\Phi(z,\bar z,0)\not\equiv 0$.   The
integer $m\geq 1$ in \eqref{nonminimality}, known to be a
biholomorphic invariant of $(M,0)$, is called \it the
nonminimality order of $M$ at $0$. \rm $M$ given
by~\eqref{nonminimality} is called \it $m$-nonminimal. \rm The
existence of the representation \eqref{nonminimality} is
equivalent to the fact that $M$ is not Levi flat.

\medskip

\noindent \bf Theorem A. \it For any integer $m\geq 2$ there exist
$m$-nonminimal at the origin real-analytic hypersurfaces
$M,M'\subset\CC{2}$ such that the germs $(M,0)$ and $(M',0)$ are
equivalent formally, but are inequivalent holomorphically. \rm

\medskip

The real hypersurfaces in Theorem A can be described explicitly,
namely, using elementary functions and solutions of rational
complex differential equations (see Theorem 4.7 below and also
Remark 5.2). To the best of our knowledge, Theorem A provides the
first known examples of formally but not holomorphically
equivalent CR-manifolds.

The next result shows that the answer is also negative for
automorphisms. Recall that a (formal) holomorphic vector field $L$
near the origin such that its real part $L+\bar L$ is (formally)
tangent to $M$ is called a (formal) infinitesimal automorphism of
$M$.

\medskip

\noindent \bf Theorem B. \it For any integer $m\geq 2$ there
exists an $m$-nonminimal at the origin real-analytic hypersurface
$M\subset\CC{2}$ with a divergent formal infinitesimal
automorphism $L$, vanishing to order $m$ at $0$. In particular,
the real flow $H^t(z,w)$, generated by $L$, consists of divergent
formal automorphisms of the germ $(M,0)$ for any
$t\in\RR{}\setminus \{c\mathbb Z\}$ for some $c\in \RR{}$. \rm

\medskip

In Theorem 5.1 of Section~5 we give a more precise technical
restatement of Theorem B.

It is possible to give a generalization of the phenomenon in
Theorems A and B for higher dimensions. For a real submanifold
$M\subset\CC{N},\,M\ni 0$, we distinguish its stability algebra
$\mathfrak{aut}\,(M,0)$ at the origin and the formal stability
algebra $\mathfrak f\,(M,0)$ (see Section 2 for more details).

\medskip

\noindent \bf Theorem C. \it

\smallskip\noindent (a) For any integers $n,k>0$ there exist real-analytic holomorphically nondegenerate CR-submanifolds
$M,M'\subset\CC{n+k}$ of type $(n,k)$ through the origin such that
the germs $(M,0)$ and $(M',0)$ are equivalent formally but are
inequivalent holomorphically. In particular, the holomorphic and
formal equivalence problems for real-analytic holomorphically
nondegenerate CR-submanifolds of type $(n,k)$ do not coincide.

\smallskip\noindent (b) For any integers $N,m\geq 2$ there exists a real-analytic holomorphically nondegenerate
hypersurface $M\subset\CC{N}$ through the origin with a divergent
formal infinitesimal automorphism $L$, vanishing to order $m$. The
real flow $H^t$ of $L$ consists of divergent formal automorphisms
of the germ $(M,0)$ for any $t\in\RR{}\setminus \{c\mathbb Z\}$
for some $c\in\RR{}$. In particular, the correspondences $\mathcal
H_{N}:\,M\lr\mathfrak{aut}\,(M,0)$ and $\mathcal { F}_{N}:\,
M\lr\mathfrak{f}(M,0)$ between the class of real-analytic
holomorphically nondegenerate real hypersurfaces
$M\subset\CC{N},\,M\ni 0$  and the class of subalgebras in the
algebra $\mathfrak f(\CC{N},0)$ do not coincide. \rm

\medskip

For a real-analytic submanifold $M\subset\CC{N}$ through the
origin one can consider its \it holomorphic isotropy dimension \rm
$\mbox{dim}\,\mathfrak{aut}\,(M,0)$ as well as its \it formal
isotropy dimension \rm $\mbox{dim}\,\mathfrak{f}\,(M,0)$.

\begin{corol} The holomorphic and the formal isotropy dimensions  do not
coincide in general  for a holomorphically nondegenerate
hypersurface $M\subset\CC{N}$.
\end{corol}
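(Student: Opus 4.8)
The plan is to deduce the Corollary directly from Theorem~C(b) (and, for $N=2$, from Theorem~B), which already supply the example one needs. First I would record the tautological inclusion: every holomorphic, i.e.\ convergent, infinitesimal automorphism of the germ $(M,0)$ is in particular a formal one, so $\mathfrak{aut}(M,0)\subseteq\mathfrak{f}(M,0)$ and hence $\dim\mathfrak{aut}(M,0)\leq\dim\mathfrak{f}(M,0)$ for \emph{every} real-analytic germ. The whole content of the Corollary is therefore to exhibit one holomorphically nondegenerate hypersurface for which this inequality is strict.

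For this I take the hypersurface $M\subset\CC{N}$ produced by Theorem~C(b), together with its divergent formal infinitesimal automorphism $L$ vanishing to order $m$ at the origin. Since $L$ is genuinely divergent, it is not the germ of a holomorphic vector field, so $L\in\mathfrak{f}(M,0)\setminus\mathfrak{aut}(M,0)$. Consequently $\mathfrak{aut}(M,0)$ is a \emph{proper} subspace of $\mathfrak{f}(M,0)$: the subspace $\mathfrak{aut}(M,0)+\langle L\rangle$ strictly contains $\mathfrak{aut}(M,0)$, because no combination of convergent fields with $L$ can be convergent unless the coefficient of $L$ vanishes. Thus $L$ contributes a direction transverse to the holomorphic isotropy algebra.

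To convert this proper inclusion into a strict inequality of the isotropy dimensions I would invoke finite-dimensionality of both algebras, which is where the hypothesis of holomorphic nondegeneracy enters: for a holomorphically nondegenerate hypersurface the infinitesimal automorphism algebra, in both its holomorphic and its formal version, is finite-dimensional (this is the standard characterization of holomorphic nondegeneracy in terms of finiteness of $\mathfrak{aut}$, together with its formal analogue). Given $\dim\mathfrak{f}(M,0)<\infty$, the proper containment $\mathfrak{aut}(M,0)\subsetneq\mathfrak{f}(M,0)$ forces $\dim\mathfrak{aut}(M,0)<\dim\mathfrak{f}(M,0)$, which is exactly the assertion that the two isotropy dimensions fail to coincide.

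The main obstacle is precisely this last step, namely ensuring that the single transverse divergent direction $L$ actually registers as a dimension gap rather than being absorbed into an infinite-dimensional ambient space; once finiteness of $\mathfrak{f}(M,0)$ is secured the conclusion is immediate. Everything else is formal bookkeeping resting entirely on Theorems~B and~C(b), so I would present the argument as a short corollary-style deduction rather than a standalone construction.
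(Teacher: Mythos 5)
Your overall strategy is the same as the paper's (the paper disposes of the Corollary in one line, ``follows directly from Theorem C''), and the first two thirds of your argument are fine: $\mathfrak{aut}(M,0)\subseteq\mathfrak{f}(M,0)$ always, and the divergent formal infinitesimal automorphism $L$ from Theorem~C(b) (or Theorem~B for $N=2$) lies in $\mathfrak{f}(M,0)\setminus\mathfrak{aut}(M,0)$, so the inclusion is proper. The problem is the step you yourself flag as the crux: you convert the proper inclusion into a strict inequality of dimensions by invoking finite-dimensionality of the \emph{formal} stability algebra, attributing it to ``the formal analogue'' of the standard characterization of holomorphic nondegeneracy. No such result is available here. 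The paper states explicitly, just before Conjecture~5.3, that for the formal isotropy dimension ``even its finiteness does not seem to follow from any known results,'' and the bound $\dim\mathfrak{f}(M,0)<\infty$ is posed as an open conjecture precisely for the $m$-nonminimal, $m\geq 2$, hypersurfaces used in the construction (the known finiteness results via convergence of formal maps cover only the minimal and $1$-nonminimal cases). So as written your argument rests on an unproven statement.

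The gap is easily repaired by running the dimension count on the other side. What \emph{is} available is $\dim\mathfrak{aut}(M,0)<\infty$: holomorphic nondegeneracy of $M$ gives finite-dimensionality of $\mathfrak{hol}(M,0)$, hence of its subalgebra $\mathfrak{aut}(M,0)$ (the paper even quotes the bound $\dim\mathfrak{aut}(M,0)\leq 5$ for Levi nonflat hypersurfaces in $\CC{2}$ from the authors' forthcoming work). Once $d:=\dim\mathfrak{aut}(M,0)<\infty$ is known, the element $L\in\mathfrak{f}(M,0)\setminus\mathfrak{aut}(M,0)$ yields a $(d+1)$-dimensional subspace $\mathfrak{aut}(M,0)\oplus\langle L\rangle\subseteq\mathfrak{f}(M,0)$, so $\dim\mathfrak{f}(M,0)\geq d+1>\dim\mathfrak{aut}(M,0)$ whether or not $\mathfrak{f}(M,0)$ is finite-dimensional. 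This gives the Corollary without appealing to Conjecture~5.3.
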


The main tool of the paper is the fundamental connection between
CR-geometry and the geometry of completely integrable systems of
complex PDEs, first observed by E.~Cartan and B.~Segre. In
particular, the geometry of real-analytic Levi nondegenerate
hypersurfaces in $\CC{2}$ is closely related to that of second
order ODEs, as discussed in Section~2. For modern treatment of the
subject see also Sukhov~\cite{sukhov1, sukhov2}, Gaussier and
Merker~\cite{gm, merker}, Nurowski and Sparling~\cite{nurowski}.
By discovering a way to connect a certain class of \it nonminimal
\rm real-analytic hypersurfaces in $\CC{2}$ with a class of \it
singular \rm complex linear second order ODEs with a reality
condition, we obtain the desired counterexamples. These examples
arise from certain singular second order ODEs with an isolated
non-Fuchsian (irregular) \rm singularity at the origin.

We point out that the paper contains an intermediate result which
is the characterization of nonminimal at the origin and spherical
at a generic point real hypersurfaces having the infinitesimal
automorphism $iz\dz$ (``rotations inside the complex tangent"),
see Theorem~3.15 and the algorithm at the end of Section~3.
Real-analytic hypersurfaces of this type were intensively studied
in the work of Ebenfelt, Lamel and Zaitsev \cite{elz}, Beloshapka
\cite{belnew}, Kolar and Lamel \cite{kl} and the authors in
\cite{nonminimal}. As the construction of each single example in
the cited papers is technically quite involved, the explicit
description, given in Section 3 of this paper, is of independent
interest. In fact, one can show that this description is \it
complete \rm (see Remark 3.19).

\smallskip

The paper is organized as follows. Because we use tools from a
broad range of topics in complex analysis and dynamical systems,
we provide in Section 2 relevant background material. In Section 3
we introduce a class of 2-parameter families of planar holomorphic
curves, that can be potentially the Segre families of nonminimal
at the origin and \it spherical at a generic point \rm real
hypersurfaces, and, at the same time, serve as a family of
integral curves for certain second order linear ODEs with an
isolated meromorphic singularity (we call these {\it
$m$-admissible ODEs with a real structure}). The explicit
characterization of these ODEs, given in Theorem 3.15, allows us
to obtain in Section~4 nonminimal real hypersurfaces, for which
the associated ODE has, essentially, the prescribed behaviour of
solutions. Then, by finding a divergent formal equivalence between
holomorphically inequivalent ODEs with a real structure, we obtain
in Propositions 4.2 and 4.3 the potential formal equivalence, and
the rest of the section is dedicated to proving that this formal
mapping is the mapping between the initial real hypersurfaces,
which proves Theorem~A and the first statement of Theorem~C. In
Section 5 we apply the divergent transformation from Theorem~A to
the infinitesimal automorphisms, which gives the proof of
Theorem~B and the second statement in Theorem~C. We also give a
description of the hypersurface $M'$ from Theorem~A by elementary
functions, and a hint of similar description for $M$ (see Remark
5.2). Finally, we formulate some open problems and conjectures,
arising from the results of this paper.

\smallskip

\begin{center}\bf Acknowledgments \end{center}

\smallskip

We would like to thank I.~Vyugin from the Steklov Institute,
Moscow and S.~Yakovenko from Weizmann Institute of Science,
Rehovot for useful discussions.

The first author is supported by the Austrian Science Fund (FWF),
and the second author is partially supported by the Natural
Sciences and Engineering Research Council of Canada.

\section{Preliminaries and background material}

\subsection{Segre varieties.}
Let $M$ be a smooth connected real-analytic hypersurface in
$\cx^{n+k}$ of type $(n,k)$, $n,k>0$, $0\in M$, and $U$ a
neighbourhood of the origin where $M\cap U$ admits a real-analytic
defining function $\phi(Z,\overline Z)$. For every point $\zeta\in
U$ we can associate to $M$ its so-called Segre variety in $U$
defined as
$$
Q_\zeta= \{Z\in U : \phi(Z,\overline \zeta)=0\}.
$$
Segre varieties depend holomorphically on the variable $\overline
\zeta$. One can find  a suitable pair of neighbourhoods $U_2={\
U_2^z}\times U_2^w\subset \cx^{n}\times \CC{k}$ and $U_1 \Subset
U_2$ such that
$$
  Q_\zeta=\left \{(z,w)\in U^z_2 \times U^w_2: w = h(z,\overline \zeta)\right\}, \ \ \zeta\in U_1,
$$
is a closed complex analytic graph. Here $h$ is a holomorphic
function. Following \cite{DiPi} we call $U_1, U_2$ a {\it standard
pair of neighbourhoods} of the origin. The antiholomorphic
$(n+k)$-parameter family of complex hypersurfaces
$\{Q_\zeta\}_{\zeta\in U_1}$ is called \it the Segre family of $M$
at the origin. \rm From the definition and the reality condition
on the defining function the following basic properties of Segre
varieties follow:
\begin{equation}\label{e.svp}
  Z\in Q_\zeta \ \Leftrightarrow \ \zeta\in Q_Z,
\end{equation}
\begin{equation*}
  Z\in Q_Z \ \Leftrightarrow \ Z\in M,
\end{equation*}
\begin{equation*}
  \zeta\in M \Leftrightarrow \{Z\in U_1: Q_\zeta=Q_Z\}\subset M.
\end{equation*}
The fundamental role of Segre varieties for holomorphic mappings
is illuminated by their invariance property: if $f: U \to U'$ is a
holomorphic map sending a smooth real-analytic submanifold
$M\subset U$ into another such submanifold $M'\subset U'$, and $U$
is as above, then
$$
f(Z)=Z' \ \ \Longrightarrow \ \ f(Q_Z)\subset Q'_{Z'}.
$$ For
the proofs of these and other properties of Segre varieties see,
e.g., \cite{webster}, \cite{DiFo}, \cite{DiPi}, \cite{shaf}, or
\cite{ber}.

In the particularly important case when $M$ is a \it real
hyperquadric, \rm i.e., when $$M=\left\{
[\zeta_0,\dots,\zeta_N]\in \cx\mathbb P^{N} : H(\zeta,\bar \zeta)
=0  \right\},$$ where $H(\zeta,\bar \zeta)$ is a nondegenerate
Hermitian form in $\CC{N+1}$ with $k+1$ positive and $l+1$
negative eigenvalues, $k+l=N-1,\,0\leq l\leq k\leq N-1$, the Segre
variety of a point $\zeta \in\CP{N}$ is the projective hyperplane
$ Q_\zeta = \{\xi\in \cx\mathbb P^N: H(\xi,\bar\zeta)=0\}$. The
Segre family $\{Q_\zeta,\,\zeta\in\CP{N}\}$  coincides in this
case with the space $(\CP{N})^*$ of all projective hyperplanes in
$\CP{N}$.

The space of Segre varieties $\{Q_Z : Z\in U_1\}$ can be
identified with a subset of $\cx^K$ for some $K>0$ in such a way
that the so-called \it Segre map \rm $\lambda : Z \to Q_Z$ is
holomorphic (see \cite{DiFo}). For a Levi nondegenerate at a point
$p$ hypersurface $M$ its Segre map is one-to-one in a
neighbourhood of $p$. When $M$ contains a complex hypersurface
$X$, for any point $p\in X$ we have $Q_p = X$ and $Q_p\cap
X\neq\emptyset\Leftrightarrow p\in X$, so that the Segre map
$\lambda$ sends the entire $X$ to a unique point in $\CC{N}$ and,
accordingly, $\lambda$ is not even finite-to-one near each $p\in
X$ (i.e., $M$ is \it not essentially finite \rm at points $p\in
X$). For a hyperquadric $\Q\subset\CP{N}$ the Segre map $\lambda'$
is a global natural one-to-one correspondence between $\CP{N}$ and
the space $(\CP{N})^*$.

\subsection{Real hypersurfaces and second order differential equations.}
Using the Segre family of a Levi nondegenerate real hypersurface
$M\subset\CC{N}$ , one can associate to it a system of
second-order holomorphic PDEs with $1$ dependent and $N-1$
independent variables. The corresponding remarkable construction
goes back to E.~Cartan \cite{cartanODE},\cite{cartan} and Segre
\cite{segre}, and was recently revisited in
\cite{sukhov1},\cite{sukhov2},\cite{nurowski},\cite{gm},\cite{merker}
(see also references therein). We describe here the procedure for
the case $N=2$, which will be relevant for our purposes. In what
follows we denote the coordinates in $\CC{2}$ by $(z,w)$, and put
$z=x+iy,\,w=u+iv$. Let $M\subset\CC{2}$ be a smooth real-analytic
hypersurface, passing through the origin, and let $(U_1,U_2)$ be
its standard pair of neighbourhoods. In this case
 one associates to $M$ a second-order holomorphic ODE, uniquely determined by the condition that it is satisfied by the
Segre family $\{Q_\zeta\}_{\zeta\in U_1}$ of $M$ in a
neighbourhood of the origin where the Segre varieties are
considered as graphs $w=w(z)$. More precisely, it follows from the
Levi nondegeneracy of $M$ near the origin that the Segre map
$\zeta\lr Q_\zeta$ is injective and also that the Segre family has
the so-called transversality property: if two distinct Segre
varieties intersect at a point $q\in U_2$, then their intersection
at $q$ is transverse. Thus, $\{Q_\zeta\}_{\zeta\in U_1}$ is a
2-parameter holomorphic w.r.t. $\bar\zeta$ family of holomorphic
curves in $U_2$ with the transversality property. It follows from
the holomorphic version of the fundamental ODE theorem (see, e.g.,
\cite{ilyashenko}) that there exists a unique second order
holomorphic ODE $w''=\Phi(z,w,w')$, satisfied by the graphs
$\{Q_\zeta\}_{\zeta\in U_1}$.

This procedure can be made more explicit if one considers the
so-called \it complex defining
 equation \rm (see, e.g., \cite{ber})\,
$w=\rho(z,\bar z,\bar w)$ \, of $M$ near the origin, which one
obtains by substituting $u=\frac{1}{2}(w+\bar
w),\,v=\frac{1}{2i}(w-\bar w)$ into the real defining equation and
applying the holomorphic implicit function theorem. The complex
defining function $\rho$ here satisfies an additional reality
condition
\begin{equation}\label{realty}
w\equiv\rho(z,\bar z,\bar\rho(\bar z,z,w)),
\end{equation}
reflecting the fact that $M$ is a real hypersurface. The Segre
variety $Q_p$ of a point $p=(a,b)$, close to the origin, is given
by
\begin{equation} \label{segre}w=\rho(z,\bar a,\bar b). \end{equation}
Differentiating \eqref{segre} once, we obtain
\begin{equation}\label{segreder} w'=\rho_z(z,\bar a,\bar b). \end{equation}
Considering \eqref{segre} and \eqref{segreder}  as a holomorphic
system of equations with the unknowns $\bar a,\bar b$, and
applying the implicit function theorem near the origin, we get
$$
\bar a=A(z,w,w'),\,\bar b=B(z,w,w').
$$
The implicit function theorem here is applicable because the
Jacobian of the system coincides with the Levi determinant of $M$
for $(z,w)\in M$ (see, e.g., \cite{merker}). Differentiating
\eqref{segre} twice and plugging there the expressions for $\bar
a,\bar b$ finally yields
\begin{equation}\label{segreder2}
w''=\rho_{zz}(z,A(z,w,w'),B(z,w,w'))=:\Phi(z,w,w').
\end{equation}
Now \eqref{segreder2} is the desired holomorphic second-order ODE
$\mathcal E$.

The concept of a PDE system associated with a CR-manifold can be
generalized for the class of arbitrary $l$-nondegenerate, $l\geq
1$, CR-submanifolds (see \cite{ber} for the definition of this
nondegeneracy condition). Namely, for any $l$-nondegenerate
CR-submanifold $M\subset\CC{n+k}$ of the CR-dimension $n$ and the
codimension $k$ one can assign a completely integrable system
$\mathcal E(M)$ of holomorphic PDEs with $n$ independent and $k$
dependent variables. The correspondence $M\lr \mathcal E(M)$ has
the following fundamental properties:

\begin{enumerate}

\item[(1)] Every local holomorphic equivalence $F:\, (M,0)\lr (M',0)$
between two $l$-nondegenerate CR-submanifolds is an equivalence
between the corresponding PDE systems $\mathcal E(M),\mathcal
E(M')$;

\item[(2)] The complexification of the infinitesimal automorphism algebra
$\mathfrak{hol}(M,0)$ of $M$ at the origin coincides with the Lie
symmetry algebra  of the associated PDE system $\mathcal E(M)$
(see, e.g., \cite{olver} for the details of the concept).

\end{enumerate}

For the proof and applications of the properties (1) and (2) we
refer to
\cite{sukhov1},\cite{sukhov2},\cite{nurowski},\cite{gm},\cite{merker}.
We emphasize that for a nonminimal at the origin hypersurface
$M\subset\CC{2}$ there is \it no \rm a priori way to associate to
$M$ a second-order ODE or even a more general PDE system near the
origin. However, in Section 3 we provide a way to connect a
special class of nonminimal real hypersurfaces in $\CC{2}$ with a
class of complex linear differential equations with an isolated
singularity.

\subsection{Equivalences of differential equations} For simplicity we consider here only
scalar ordinary differential equations, even though all the
constructions below can be applied for arbitrary systems of PDEs.
We refer to the book of Olver~\cite{olver} as a general reference
to this subsection. Also note that these constructions are nothing
but a simple interpretation of a more general concept of a \it jet
bundle \rm.

Consider two ODEs $\mathcal
E=\{y^{(n)}=\Phi(x,y,y',...,y^{(n-1)})\}$ and $\mathcal
E^*=\{y^{(n)}=\Phi^*(x,y,y',...,y^{(n-1)})\}$, where the functions
$\Phi$ and $\Phi^*$ are holomorphic in some neighbourhood the
origin in $\CC{n}$. We say that a biholomorphism
$F:\,(\CC{n+1},0)\lr(\CC{n+1},0)$  transforms $\mathcal E$ into
$\mathcal E^*$, if it sends (locally) graphs of solutions of
$\mathcal E$ into graphs of solutions of $\mathcal E^*$.
Introducing the $(n+2)$-dimensional \it $n$-jet space \rm
$J^{(n)}$, which is a linear space with the coordinates
$x,y,y_1,...,y_{n}$, corresponding to the independent variable
$x$, the dependent variable $y$ and its derivatives up to order
$n$, one can naturally consider $\mathcal E$ and $\mathcal E^*$ as
complex submanifolds in $J^{(n)}$. Moreover, for any
biholomorphism $F$ as above, sufficiently close to the origin, one
may consider its \it $n$-jet prolongation
$F^{(n)}:\,(J^{(n)},0)\lr (J^{(n)},0)$. \rm The jet prolongation
procedure can be conveniently interpreted as follows. The first
two components of the mapping $F^{(n)}$ coincide with those of
$F$. To obtain the remaining components  we  denote the
coordinates in the preimage by $(x,y)$ and in the target domain by
$(X,Y)$. Then the derivative $\frac{dY}{dX}$ can be symbolically
recalculated, using the chain rule, in terms of $x,y,y'$, so that
the third coordinate $Y_1$ in the target jet space becomes a
function of $x,y,y_1$. In the same manner one obtains all the $n$
missing components of the prolongation of the mapping $F$. It is
then nothing but a tautology to say that \it the mapping $F$
transforms the ODE $\mathcal E$ into $\mathcal E^*$ if and only if
the prolonged mapping $F^{(n)}$ transforms $(\mathcal E,0)$ into
$(\mathcal E^*,0)$ as submanifolds in the jet space $J^{(n)}$\rm.
A similar statement can be formulated for certain singular
differential equations, for example, for linear ODEs (see, e.g.,
\cite{ilyashenko}).

For $n=2$ the local equivalence problem for nonsingular ODEs was
solved in the celebrated papers of E.~Cartan \cite{cartanODE} and
A.~Tresse \cite{tresse}. Of particular interest to us is the
special case when the ODE is equivalent to the simplest (flat)
equation $y''=0$. We refer to the book of Arnold \cite{arnoldgeom}
for a modern treatment of the problem and some further
developments.

\subsection{Formal power series, formal equivalences and formal flows}
For the set-up and basic properties of  formal power series and
formal mappings we refer to \cite{ilyashenko} and \cite{ber}. We
give below a list of statements that will be useful for us in what
follows.

\smallskip

$\bullet$ The substitution of a formal mapping
$(\CC{n},0)\lr(\CC{n},0)$ into a formal power series is
well-defined. In particular, a composition of two formal mappings
$(\CC{n},0)\lr(\CC{n},0)$ is always well-defined (recall that for
a formal mapping $(\CC{n},0)\lr(\CC{m},0)$ we always assume the
absence of the constant term).

\smallskip

$\bullet$ A formal mapping $F:(\CC{n},0)\lr(\CC{n},0)$ is called
\it invertible \rm if there exists a formal mapping
$G:\,(\CC{n},0)\lr(\CC{n},0)$ with $F\circ G$ being the identity
map. Note that any formal mapping $(\CC{n},0)\lr(\CC{n},0)$  is
formally invertible, provided its linear part is invertible as an
element of $\mbox{GL}_n(\CC{})$.

\smallskip

$\bullet$ For any formal mapping
$F(z,w):\,(\CC{m}\times\CC{n},0)\lr(\CC{n},0)$ the following
formal version of the implicit function theorem holds: if the
linear part $\frac{\partial F}{\partial w}(0)$ of $F$ w.r.t. $w$
is invertible, then there exists a unique formal mapping
$\varphi:\,(\CC{m},0)\lr(\CC{n},0)$  such that $F(z,\varphi(z))=0$
as a formal vector power series.

\smallskip

Let $X=f_1(z)\frac{\partial}{\partial
z_1}+...+f_n(z)\frac{\partial}{\partial z_n}$ be a formal vector
field with $X(0)=0$. \it A formal flow of $X$ \rm is a holomorphic
w.r.t. $t\in\CC{}$ one-parameter family of formal mappings
$F^t(z):\,(\CC{n},0)\lr(\CC{n},0)$ such that
$\frac{d}{dt}F^t(z)|_{t=0}=X$ and the mapping $t\lr F^t$ is a
group homomorphism between $(\CC{},+)$ and the group of formal
invertible mappings $(\CC{n},0)\lr(\CC{n},0)$. A 1-parameter group
$F^t(z)$ as above is called \it holomorphic, \rm if all the
truncations $j^k F^t(z)$ are holomorphic w.r.t. $t$.

\smallskip

$\bullet$ For any formal vector field $X$ with $X(0)=0$ its formal
flow always exists and can be uniquely determined.

\smallskip

Recall that for a real submanifold $M\subset\CC{N}$ with $M\ni 0$
its \it infinitesimal automorphism algebra at the origin \rm is
the real Lie algebra $\mathfrak{hol}\,(M,0)$ of holomorphic vector
fields $X$ near the origin such that their real parts $X+\bar X$
are tangent to $M$ at each point. The \it stability algebra \rm
$\mathfrak{aut}\,(M,0)\subset\mathfrak{hol}\,(M,0)$ is the
subalgebra of vector fields, vanishing at $0$. Infinitesimal
automorphisms are exactly the vector fields with a flow,
generating local automorphisms $F:\,(M,0)\lr(M,0)$. One can
further define the \it formal infinitesimal automorphism algebra
\rm $\mathfrak f\,(M,0)$, \rm which consists of formal vector
fields in $\CC{N}$, formally satisfying the tangency condition to
$M$, and the \it formal stability algebra \rm  $\mathfrak
f\,(M,0)$, which consists of formal vectors fields $X\in f\,(M,0)$
with $X(0)=0$.

\smallskip

$\bullet$ A formal vector field $X$ with $X(0)=0$ is a formal
infinitesimal automorphism of $(M,0)$ if and only if the formal
flow of $X$ formally preserves the germ $(M,0)$.

\smallskip

Finally, we will need the following property of formal
CR-mappings. For a real-analytic submanifold $M\subset\CC{N}$,
passing through the origin and given in some neighbourhood $U\ni
0$ by the defining equation $\theta(z,\bar z)=0$, we define its
\it complexification \rm to be the complex submanifold
$$M^{\CC{}}=\{(z,\zeta)\in U\times
U:\,\theta(z,\zeta)=0\}\subset\CC{2N}.$$

\smallskip

$\bullet$ Let $M_1,M_2\subset\CC{N}$ be real-analytic
submanifolds, passing through the origin. A (formal)
transformation $F:\,(\CC{N},0)\lr(\CC{N},0)$ without a free term
sends (formally) $(M_1,0)$ into $(M_2,0)$ if and only if the
direct product
 $\left(F(z),\bar
F(\zeta)\right):\,(\CC{2N},0)\lr(\CC{2N},0)$ (called \it the
complexification of $F$) \rm sends (formally) $(M_1^{\CC{}},0)$
into $(M_2^{\CC{}},0)$.

\subsection{Complex linear differential equations with an isolated singularity}
Perhaps the most important and geometrical class of complex
differential equations is the class of complex linear ODEs. We
refer to \cite{ilyashenko}, \cite{ai}, \cite{bolibruh},
\cite{vazow},\cite{coddington} and references therein for various
facts and problems, concerning complex linear differential
equations. A first order linear system of $n$ complex ODEs in a
domain $G\subset\CC{}$ (or simply a linear system in a domain $G$
in what follows) is a holomorphic ODE system $\mathcal L$ of the
form $y'(w)=A(w)y$, where $A(w)$ is an $n\times n$ matrix-valued
holomorphic in $G$
 function and $y(w)=(y_1(w),...,y_n(w))$
is an $n$-tuple of unknown functions. Solutions of $\mathcal L$
near a point $p\in G$ form a linear space of dimension~$n$.
Moreover, all the solution $y(w)$ of $\mathcal L$ are defined
globally in $G$ as (possibly multiple-valued) analytic functions,
i.e., any germ of a solution near a point $p\in G$ of $\mathcal L$
extends analytically along any path $\gamma\subset G$, starting
at~$p$. A \it fundamental system of solutions for $\mathcal L$ \rm
is a matrix whose columns form some collection of $n$ linearly
independent solutions of $\mathcal L$.

If the case when $G$ is a punctured disc, centred at $0$, we call
$\mathcal L$ \it a system with an isolated singularity at $w=0$.
\rm An important (and sometimes even a complete) characterization
of an isolated singularity is its \it monodromy operator, \rm
defined as follows. If $Y(w)$ is some fundamental system of
solutions of $\mathcal L$ in $G$, and $\gamma$ is a simple loop
about the origin, then it is not difficult to see that the
monodromy of $Y(w)$ w.r.t. $\gamma$ is given by the right
multiplication by a constant nondegenerate matrix $M$, called \it
the monodromy matrix. \rm The matrix $M$ is defined up to a
similarity, so that it defines a linear operator
$\CC{n}\lr\CC{n}$, which is  called the monodromy operator of the
singularity.

If the matrix-valued function $A(w)$ is meromorphic at the
singularity $w=0$, we call it a {\it meromorphic singularity}. As
the solutions of $\mathcal L$ are holomorphic in any proper sector
$S\subset G$ of a sufficiently small radius with the vertex at
$w=0$, it is important to study the behaviour of the solutions as
$w\rightarrow 0$. If all solutions of $\mathcal L$ admit a bound
$||y(w)||\leq C|w|^A$  in any such sector (with some constants
$C>0,\ A\in \mathbb R$, depending possibly on the sector), then
$w=0$ is called \it a regular singularity, \rm otherwise it is
called \it an irregular singularity. \rm In particular, in the
case of the trivial monodromy the singularity is regular if and
only if all the solutions of $\mathcal L$ are meromorphic in $G$.
L.~Fuchs introduced the following condition: a singular point
$w=0$ is called \it Fuchsian, \rm if $A(w)$ is meromorphic  at
$w=0$ and has a pole of order $\leq 1$ there. The Fuchsian
condition turns out to be sufficient for the regularity of a
singular point. Another remarkable property of Fuchsian
singularities can be described as follows. We call two complex
linear systems with an isolated singularity $\mathcal L_1,\mathcal
L_2$ \it (formally) equivalent, \rm if there exists a (formal)
transformation $F:\,(\CC{n+1},0)\lr(\CC{n+1},0)$ of the form
$F(w,y)=(w,H(w)y)$ for some (formally) invertible matrix-valued
function $H(w)$, which transforms (formally) $\mathcal L_1$ into
$\mathcal L_2$. It turns out that two Fuchsian systems are
formally equivalent if and only if they are equivalent
holomorphically (moreover, any formal equivalence between them as
above must be convergent). However, this is \it not \rm the case
for non-Fuchsian systems (see \cite{vazow} for some related
constructions).

A scalar linear complex ODE of order $n$ in a
 domain $G\subset\CC{}$ is an ODE $\mathcal E$ of the form
$$z^{(n)}=a_{n}(w)z+a_{n-1}(w)z'+...+a_1(w)z^{(n-1)},$$ where $\{a_j(w)\}_{j=1,...,n}$ is
a given collection of holomorphic functions in $G$ and $z(w)$ is
the unknown function. By a reduction of $\mathcal E$ to a first
order linear system (see the above references and
also~\cite{vyugin} for various approaches of doing that) one can
naturally transfer most of the definitions and facts, relevant to
linear systems, to scalar equations of order $n$. The main
difference here is contained in the appropriate definition of
Fuchsian: a singular point $w=0$ for an ODE $\mathcal E$ is called
\it Fuchsian, \rm if the orders of poles $p_j$ of the functions
$a_j(w)$ satisfy the inequalities $p_j\leq j$, $j=1,2,\dots,n$. It
turns out that the condition of Fuchs becomes also necessary for
the regularity of a singular point in the case of $n$-th order
scalar ODEs.

Further information on the classification of isolated
singularities (including Poincare-Dulac normalization) can be
found in \cite{ilyashenko}, \cite{vazow} or \cite{coddington}.

\section{Meromorphic linear differential equations with real
structure}

The main purpose of this section is to establish a class of
 complex linear second order ODEs with a meromorphic
 singularity, that generate, in a certain sense, nonminimal at the origin and
 spherical at a generic point real hypersurfaces. We start with a  number of definitions.
Denote by $\Delta_\varepsilon$ a disc in $\CC{}$, centred at $w=0$
of radius $\varepsilon$, and by $\Delta^*_\varepsilon$ the
corresponding punctured disc.

 \begin{dfn}
 A complex linear second order ODE with an isolated singularity at the origin is called
 \it $m$-admissible, \rm if it is of the form
 \begin{equation}\label{admissibleODE}
 z''=\frac{P(w)}{w^m}z'+\frac{Q(w)}{w^{2m}}z,
 \end{equation}
 where $m\geq 1$ is an integer and $P(w),Q(w)\in\mathcal O(\Delta_\varepsilon)$ for some
 $\varepsilon>0$.
 \end{dfn}

Direct calculations show that if a germ $z(w)$ of a solution of
\eqref{admissibleODE} is
 invertible in some domain, then the inverse function $w(z)$
 satisfies in the image domain the ODE \begin{equation}
 \label{inverse}w''=-\frac{P(w)}{w^m}(w')^2-\frac{Q(w)}{w^{2m}}(w')^3\,z.\end{equation}
 We call \eqref{inverse} \it the inverse ODE \rm for \eqref{admissibleODE}.
Note that in \eqref{admissibleODE} the independent variable is
$w$, while $z$ is the independent variable for the inverse ODE.
Also note that without the requirement that $\frac{P(w)}{w^m}$ and
$\frac{Q(w)}{w^{2m}}$ are irreducible, a meromorphic at the origin
complex linear ODE is admissible for different integers $m\geq 1$.

 We next introduce a class of anti-holomorphic 2-parameter families of planar
 complex  curves that potentially can be the family of solutions for an
 $m$-admissible ODE and, at the same time, the family of Segre varieties of a real hypersurface in $\CC{2}$.

 \begin{dfn} \it An $m$-admissible Segre family \rm is a
 2-parameter antiholomorphic family of planar holomorphic curves
 in a polydisc $\Delta_\delta\times\Delta_\varepsilon$
 of the form
 \begin{equation} \label{admissiblefamily} w=\bar\eta
 e^{\pm i\bar\eta^{m-1}\psi(z\bar\xi,\bar\eta)},\end{equation} where
 $m\geq 1$ is an integer, $\xi\in\Delta_\delta,\eta\in \Delta_\varepsilon$ are holomorphic parameters, and the function $\psi(x,y)$ is holomorphic
 in the polydisc $\Delta_{\delta^2}\times\Delta_\varepsilon$ and has there an expansion
 $$
 \psi(x,y)=x+\sum\limits_{k\geq 2}\psi_k(y)x^k, \ \ \psi_k(y)\in\mathcal O(\Delta_\varepsilon) .
 $$
 \end{dfn}

 It follows then that an $m$-admissible Segre family has the form
 \begin{equation}\label{phi}
 \mathcal S = \left\{w=\bar\eta e^{\pm i\bar\eta^{m-1}\left(z\bar\xi+\sum_{k\geq
 2}\psi_k(\bar\eta)z^k\bar\xi^k\right)},\ (\xi,\eta)\in \Delta_{\delta}\times\Delta_\varepsilon \right\}.
 \end{equation}

The fact that an anti-holomorphic 2-parameter family of planar
complex curves is $m$-admissible can be easily checked: a family
 $w=\rho(z\bar\xi,\bar\eta)$, where $\rho$ is holomorphic in some polydisc $U\subset\CC{2}$, centred
 at the origin, is $m$-admissible if and only if the defining function
 $\rho$ has the expansion
 $\rho(z\bar\xi,\bar\eta)=\bar\eta\pm i\bar\eta^m
 z\bar\xi+O(\bar\eta^m z^2\bar\xi^2)$.

 For any real-analytic nonminimal at the origin hypersurface $M\subset\CC{2}$ with nonminimality
 order~$m$ of the form
 \begin{equation}\label{admissiblehyper}
 v=u^m\left(\pm |z|^2+\sum\limits_{k\geq 2}h_{k}(u)|z|^{2k}\right),
 \end{equation}
 it is not difficult to check that its Segre family is an $m$-admissible Segre family. We call a real hypersurface of
 the form \eqref{admissiblehyper} an \it  $m$-admissible nonminimal hypersurface. \rm Note that in the case of
 $m$-admissible Segre families (respectively, nonminimal hypersurfaces) the integer $m$ is uniquely
 determined by the Segre family (respectively, by the hypersurface). Depending on the sign in the exponent
 $e^{\pm i\bar\eta^{m-1}\psi(z\bar\xi,\bar\eta)}$ we call an $m$-admissible Segre family
 \it positive \rm or \it negative \rm respectively, and the same for real hypersurfaces. In analogy with the case of real hypersurfaces, we call the holomorphic curve in the family
 \eqref{admissiblefamily},  corresponding to the values
 $\xi=a,\eta=b$ of parameters, \it the Segre variety of a point
 \rm  $p=(a,b)\in\Delta_\delta\times\Delta_\varepsilon$ and denote it by $Q_p$.
We call the hypersurface
 $$
 X=\{w=0\}\subset\Delta_\delta\times\Delta_\varepsilon
 $$
\it the singular locus \rm of an $m$-admissible Segre family. The
following proposition provides some simple properties of Segre
families.

 \begin{propos} The following properties hold for an
 $m$-admissible family:

 \smallskip

 (i) $Q_p\cap X\neq\emptyset \Longleftrightarrow p\in X \Longleftrightarrow Q_p=X$.

 \smallskip

 (ii) The Segre mapping $\lambda:\,p\longrightarrow Q_p$ is
 injective in $(\Delta_\delta\times\Delta_\varepsilon)\setminus X$.


\end{propos}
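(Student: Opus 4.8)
The plan is to work directly from the explicit parametrization of the family; both assertions reduce to elementary manipulations of the defining exponential, and no deep machinery is needed. For part (i), I would start from the observation that for $p=(a,b)$ the Segre variety $Q_p$ is the graph
\[
w=\bar b\, e^{\pm i\bar b^{m-1}\psi(z\bar a,\bar b)},
\]
and that the exponential factor is never zero. Hence along $Q_p$ the coordinate $w$ vanishes at some point precisely when the prefactor $\bar b$ vanishes, i.e. when $b=0$, i.e. $p\in X$; this gives $Q_p\cap X\neq\emptyset\Leftrightarrow p\in X$. For the remaining equivalence, when $b=0$ the prefactor annihilates the whole right-hand side while the exponent stays finite (note $\psi(0,\cdot)=0$, and for $m\geq 2$ the factor $\bar b^{m-1}$ vanishes as well), so $Q_p=\{w=0\}=X$; conversely $Q_p=X$ forces $\bar b=0$, again because the exponential cannot vanish. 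The only point deserving care is checking that the exponent stays well-defined and bounded as $b\to 0$, which follows from the holomorphy of $\psi$ on the polydisc $\Delta_{\delta^2}\times\Delta_\varepsilon$.

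For part (ii), take $p=(a,b)$ and $p'=(a',b')$ outside $X$ with $Q_p=Q_{p'}$, so the two graphs coincide as functions of $z$ on their common domain, which contains a neighbourhood of $z=0$. Evaluating at $z=0$ and using $\psi(0,\cdot)=0$, the exponent is zero and I read off $\bar b=\bar b'$, hence $b=b'$. Dividing by the common nonzero prefactor $\bar b$ then yields
\[
e^{\pm i\bar b^{m-1}\psi(z\bar a,\bar b)}=e^{\pm i\bar b^{m-1}\psi(z\bar{a'},\bar b)}
\]
identically in $z$ (with the same sign throughout, since both curves belong to one fixed $m$-admissible family). Passing from equality of the exponentials to equality of the exponents, both exponents are holomorphic in $z$ and vanish at $z=0$, so their difference is a continuous $2\pi i\mathbb Z$-valued function vanishing at the origin, hence identically $0$. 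Since $b\neq 0$ gives $\bar b^{m-1}\neq 0$, I may cancel it to obtain $\psi(z\bar a,\bar b)=\psi(z\bar{a'},\bar b)$ for all $z$. Finally the normalization $\psi(x,y)=x+\sum_{k\geq 2}\psi_k(y)x^k$ forces the coefficient of $z$ on the two sides to be exactly $\bar a$ and $\bar{a'}$; comparing these linear terms gives $\bar a=\bar{a'}$, so $a=a'$ and $p=p'$.

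The main, and indeed essentially the only, obstacle is the branch-of-logarithm step in (ii): one must argue that coincidence of the two exponentials forces coincidence of the exponents themselves, not merely their equality modulo $2\pi i$. This is precisely what the continuity-plus-vanishing-at-the-origin argument resolves. Everything else is a direct reading-off of coefficients from the explicit normal form of $\psi$ together with the fact that the exponential is nonvanishing, so I do not expect any further difficulty.
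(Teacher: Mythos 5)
Your proof is correct and follows essentially the same route as the paper: both arguments work directly from the explicit form of the family and recover the parameters from the behaviour of the graph at $z=0$ (the paper simply notes $w(0)=\bar\eta$ and $w'(0)=\pm i\bar\xi\bar\eta^m$, which sidesteps your branch-of-logarithm discussion by extracting $\bar\xi$ from the first derivative rather than from the full exponent). Your handling of the logarithm branch is a valid, if slightly longer, substitute for that shortcut.
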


\begin{proof}
The first property follows directly from~\eqref{admissiblefamily}.
For the proof of (ii) we note that if a Segre variety $Q_p$ is
given as a graph $w=w(z)$, then, from \eqref{admissiblefamily},
$w(0)=\bar\eta,\,w'(0)=\pm i\bar\xi\bar\eta^m$, depending on the
sign of the Segre family, and that implies the global injectivity
of $\lambda(p)$ in
$(\Delta_\delta\times\Delta_\varepsilon)\setminus X$.
\end{proof}

The next definition connects admissible Segre families with second
order linear ODEs with a meromorphic singularity.

 \begin{dfn}
 We say that an $m$-admissible Segre family $\mathcal S$  \it is associated with an $m$-admissible ODE $\mathcal
 E$, \rm if after an appropriate shrinking of the basic neighbourhood $\Delta_\delta\times\Delta_\varepsilon$ of the
 origin all the elements $Q_p\in\mathcal S$ with $p\notin X$, considered as graphs
 $w=w(z)$, satisfy the inverse ODE for $\mathcal E$.
 \end{dfn}

 Given an ODE $\mathcal E$, we denote an associated $m$-admissible Segre family by
 $\mathcal S^\pm_{\mathcal E}$, depending on the sign of the Segre family.
By Proposition 3.3, $w\ne 0$ for $p\notin X$, and so we may alway
substitute the Segre
 varieties into \eqref{inverse}.

 \begin{propos} For any $m$-admissible ODE $\mathcal E$~\eqref{admissibleODE} there
 exists a unique positive and a unique negative $m$-admissible (with the same $m$) Segre family
 $\mathcal S$, associated with $\mathcal E$. The ODE $\mathcal E$ and the associated
 Segre families $\mathcal S^\pm_{\mathcal E}$ given by \eqref{phi}, satisfy the following relations:
 \begin{gather}\label{relation22}
 P(w)=\pm 2i\psi_2(w)-w^{m-1} ,\\
 \label{relation33} Q(w)=6\psi_3(w)-8(\psi_2(w))^2\pm 2i(m-1)w^{m-1}\psi_2(w)\mp 2iw^m\psi_2'(w).
 \end{gather}
 In particular, for any fixed $m$ the correspondences $\mathcal E\longrightarrow\mathcal S^+_{\mathcal E}$
 and $\mathcal E\longrightarrow\mathcal S^-_{\mathcal E}$ are injective.
 \end{propos}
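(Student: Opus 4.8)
The plan is to reduce the whole statement to a single functional identity, obtained by substituting a generic member of the family into the inverse ODE \eqref{inverse}, and then to read off the coefficients $\psi_k$ one at a time by expanding in the fibre variable $z$.

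First I would fix the parameters $(\bar\xi,\bar\eta)$ and write the corresponding Segre variety as a graph $w=w(z)=\bar\eta\,e^{\phi}$, where $\phi=\pm i\bar\eta^{m-1}g$ and $g=g(z)=\psi(z\bar\xi,\bar\eta)$, so that $g'=\bar\xi\,\psi_x(z\bar\xi,\bar\eta)$, $g''=\bar\xi^2\psi_{xx}(z\bar\xi,\bar\eta)$ (primes denoting $d/dz$). Logarithmic differentiation gives $w'=w\phi'$ and $w''=w\big((\phi')^2+\phi''\big)$. Substituting into \eqref{inverse} and cancelling one factor of $w$, one checks that every resulting term is divisible by $\bar\eta^{m-1}$: each factor $\phi'$ or $\phi''$ carries a power $\bar\eta^{m-1}$, which compensates the powers of $w$ in the two denominators $w^{m},w^{2m}$ and leaves $\bar\eta^{m-1}$ as a common factor. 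Dividing by $\bar\eta^{m-1}$ leaves the reduced identity
\[
-\bar\eta^{m-1}(g')^2\pm i\,g''=P(w)\,e^{-(m-1)\phi}(g')^2\pm i\,Q(w)\,e^{-(2m-2)\phi}(g')^3\,z ,
\]
in which $w=\bar\eta e^{\phi}$ and every term is holomorphic in $(z,\bar\xi,\bar\eta)$ on a polydisc. This cancellation of $\bar\eta^{m-1}$ is precisely what will force the coefficients $\psi_k$ to be holomorphic up to and including the singular value $\bar\eta=0$.

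Next I would expand the reduced identity in powers of $z$. Since $g$ depends on $z$ only through $z\bar\xi$ and $\phi$ is proportional to $g$, the coefficient of $z^k$ is homogeneous of degree $k+2$ in $\bar\xi$; dividing out $\bar\xi^{k+2}$ turns it into an identity between holomorphic functions of $\bar\eta$. The term $\pm i\,g''$ contributes $\psi_{k+2}(\bar\eta)$ linearly with the nonvanishing coefficient $\pm i(k+2)(k+1)$, while every remaining term involves only $P,Q$ and $\psi_2,\dots,\psi_{k+1}$. Hence the hierarchy solves recursively. The $z^0$-coefficient gives at once $P=\pm 2i\psi_2-w^{m-1}$, i.e.\ \eqref{relation22}; the $z^1$-coefficient, after substituting \eqref{relation22}, yields \eqref{relation33}; and each subsequent coefficient determines $\psi_{k+2}(\bar\eta)$ uniquely in terms of the data already fixed, the division by $\bar\eta^{m-1}$ being harmless by the common-factor observation above. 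This simultaneously gives existence of a (formal) admissible family, its uniqueness for each choice of sign, and the two displayed relations. Injectivity of $\mathcal E\mapsto\mathcal S^{\pm}_{\mathcal E}$ is then immediate: the family determines $\psi_2,\psi_3$, and \eqref{relation22}--\eqref{relation33} recover $(P,Q)$ from $(\psi_2,\psi_3)$, so distinct $m$-admissible ODEs produce distinct families.

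The genuinely delicate step, and the one I expect to be the main obstacle, is convergence: the recursion a priori only produces $\psi(x,y)=\sum_k\psi_k(y)x^k$ as a formal series in $x$, whereas Definition 3.2 requires it to converge on a polydisc $\Delta_{\delta^2}\times\Delta_\varepsilon$. I would establish this by a majorant estimate on the reduced identity, using that $P,Q\in\mathcal O(\Delta_\varepsilon)$ are bounded on a slightly smaller disc and that the factor $(k+2)(k+1)$ multiplying $\psi_{k+2}$ supplies the two derivatives' worth of decay needed to dominate the convolution terms coming from $(g')^2$, $(g')^3$ and the exponentials $e^{-(m-1)\phi}$, $e^{-(2m-2)\phi}$. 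Alternatively one can invoke the holomorphic dependence of solutions of \eqref{inverse} (a nonsingular holomorphic ODE for $w\neq 0$) on the initial data $w(0)=\bar\eta$, $w'(0)=\pm i\bar\xi\bar\eta^{m}$ read off from Proposition 3.3, and verify that the normalized family extends holomorphically across the singular value $\bar\eta=0$, which is again exactly the content of the $\bar\eta^{m-1}$ cancellation.
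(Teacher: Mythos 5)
Your proposal is correct and follows essentially the same route as the paper: your ``reduced identity'' is precisely the paper's equation \eqref{findphi}, and the relations \eqref{relation22}--\eqref{relation33} are extracted from the $z^0$- and $z^1$-coefficients exactly as in the paper's proof. The only presentational difference is that you obtain existence and uniqueness by a formal recursion and treat convergence separately, whereas the paper gets all three at once by recognizing \eqref{findphi} as a well-posed holomorphic Cauchy problem \eqref{findphi2} in $t=z\bar\xi$ with $\bar\eta$ as a holomorphic parameter --- which is exactly your second convergence option.
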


 \begin{proof}
Consider a positive $m$-admissible Segre family $\mathcal S$, as
in \eqref{admissiblefamily}, and an $m$-admissible ODE $\mathcal
E$. We first express the condition that $\mathcal S$ is associated
with $\mathcal E$ in the form of a differential equation. Fix
$p=(\xi,\eta)\in\Delta_\delta\times\Delta_\varepsilon$ and
consider the Segre variety $Q_p$, given by
\eqref{admissiblefamily} as a graph $w=w(z)$. For the function
$\psi(x,y)$ we denote by $\dot\psi$ and $\ddot\psi$ its first and
second derivatives respectively w.r.t. the first argument. Then
one computes
\begin{eqnarray*}
w' &=& i\bar\xi\bar\eta^m e^{i\bar\eta^{m-1}\psi(z\bar\xi,\bar\eta)}\dot\psi(z\bar\xi,\bar\eta),\\
w'' &=& i\bar\xi^2\bar\eta^m
e^{i\bar\eta^{m-1}\psi(z\bar\xi,\bar\eta)}\ddot\psi(z\bar\xi,\bar\eta)-
\bar\xi^2\bar\eta^{2m-1}e^{i\bar\eta^{m-1}\psi(z\bar\xi,\bar\eta)}(\dot\psi(z\bar\xi,\bar\eta))^2.
\end{eqnarray*}
Plugging these expressions into \eqref{inverse} yields after
simplifications
\begin{gather}\label{findphi}
\ddot\psi(z\bar\xi,\bar\eta)=-i(\dot\psi(z\bar\xi,\bar\eta))^2\left(\bar\eta^{m-1}+
P(\bar\eta e^{i\bar\eta^{m-1}\psi(z\bar\xi,\bar\eta)})e^{i(1-m)\bar\eta^{m-1}\psi(z\bar\xi,\bar\eta)}\right)+\\
\notag +(\dot\psi(z\bar\xi,\bar\eta))^3Q(\bar\eta
e^{i\bar\eta^{m-1}\psi(z\bar\xi,\bar\eta)})e^{i(2-2m)\bar\eta^{m-1}\psi(z\bar\xi,\bar\eta)}z\bar\xi
.
\end{gather}
Consider now a holomorphic near the origin differential equation
\begin{equation}\label{findphi2}
y''=-i(y')^2\left(\bar\eta^{m-1}+ P(\bar\eta
e^{i\bar\eta^{m-1}y})e^{i(1-m)\bar\eta^{m-1}y}\right)
+(y')^3tQ(\bar\eta
e^{i\bar\eta^{m-1}y})e^{i(2-2m)\bar\eta^{m-1}y},
\end{equation}
where $y$ is the dependent variable, $t$ is the independent
variable, and $\bar\eta$ is a holomorphic parameter near the
origin. The Cauchy problem for the ODE \eqref{findphi2} with the
initial data $y(0)=0,\,y'(0)=1$ is well-posed, as the right-hand
side is polynomial w.r.t. $y'$. As follows from the theorem on the
analytic dependence of solutions of a holomorphic ODE on a
holomorphic parameter (see, e.g., \cite{ilyashenko}), its solution
$y=y_0(t,\bar\eta)$ is unique and holomorphic in some polydisc
$U\subset\CC{2}$, centred at the origin. The comparison of
\eqref{findphi} and \eqref{findphi2} shows that the functions
$y_0(z\bar\xi,\bar\eta)$ and $\psi(z\bar\xi,\bar\eta)$ coincide.
Observe that the above arguments are reversible.

For the proof of the proposition, given an $m$-admissible ODE
$\mathcal E$, we solve the corresponding equation \eqref{findphi2}
with the initial data $y(0)=0,\,y'(0)=1$, and obtain a solution
$y_0(t,\bar\eta)=t+\sum\limits_{k\geq 2,l\geq
0}c_{kl}t^k\bar\eta^l$. Then $$w=\bar\eta
e^{i\bar\eta^{m-1}y_0(z\bar\xi,\bar\eta)}$$ is the desired
positive
 $m$-admissible Segre family $\mathcal S=\mathcal
S_{\mathcal E}$ associated with $\mathcal E$. The uniqueness of
$\mathcal S_{\mathcal E}$ also follows from the uniqueness of the
solution of the Cauchy problem.

To prove the relations \eqref{relation22},\eqref{relation33}, we
substitute \eqref{admissiblefamily} into \eqref{inverse}. As
$(\bar\xi,\bar\eta)\in\Delta_\delta\times\Delta_\varepsilon$ is
arbitrary, we compare in the obtained identity the
$z^0\bar\xi^2\bar\eta^l$-terms, which gives
$2i\bar\eta^m\psi_2(\bar\eta)-\bar\eta^{2m-1}=\bar\eta^mP(\bar\eta)$.
This is equivalent to \eqref{relation22}. Comparing then the
$z^1\bar\xi^3\bar\eta^l$-terms, we get
\begin{gather*}6i\bar\eta^m\psi_3(\bar\eta)-6\bar\eta^{2m-1}\psi_2(\bar\eta)-i\bar\eta^{3m-2}=i\bar\eta^mQ(\bar\eta)
-2iP(\bar\eta)(2i\bar\eta^m\psi_2(\bar\eta)-\bar\eta^{2m-1})-\\-i
m P(\bar\eta)\bar\eta^{2m-1}+i\bar\eta^{2m}P'(\bar\eta) .
\end{gather*}
From this and \eqref{relation22}, we finally obtain
\eqref{relation33}.

The proof for a negative Segre family is analogous. \end{proof}

 Proposition 3.5 gives an effective algorithm for computing the
 $m$-admissible Segre family for a given linear meromorphic
 second order ODE. Our goal is, however, to identify those
 ODEs that produce Segre families with a reality
 condition, that is, Segre families of nonminimal real
 hypersurfaces.

 \begin{dfn} We say that an $m$-admissible Segre family \it has a
 real structure, \rm if it is the Segre family of an
 $m$-admissible real hypersurface $M\subset \CC{2}$. We also say
 that an $m$-admissible ODE $\mathcal E$ \it has a positive (respectively, negative) real structure,
 \rm if the associated positive (respectively, negative) $m$-admissible Segre family $\mathcal S^\pm_{\mathcal E}$
 has a real structure. We say that the corresponding real hypersurface $M$ is \it associated with $\mathcal E$. \rm
 \end{dfn}

 We will need a development of the following construction from the theory of second order ODEs, going back to A.Tresse
 \cite{tresse}  and  E.Cartan \cite{cartanODE} (see also \cite{arnoldgeom}, \cite{nurowski}, \cite{merker}, \cite{gm} and
 references therein). Let $\rho(z,\bar\xi,\bar\eta)$ be a holomorphic function near the origin in $\CC{3}$ with
  $\rho(0,0,0)=0$, and $d\rho(0,0,0)=\bar\eta$. For  $z,\xi\in\Delta_\delta,w,\eta\in\Delta_\varepsilon$, let
 $$
 \mathcal S=\{w=\rho(z,\bar \xi,\bar \eta)\}
 $$
be a 2-parameter antiholomorphic family of holomorphic curves near
the origin, parametrized by $(\xi,\eta)$. We will call such a
family \it a (general) Segre family, \rm and for each point
 $p=(\xi,\eta)\in\Delta_\delta\times\Delta_\varepsilon$ we call
 the corresponding holomorphic curve
 $Q_p=\{w=\rho(z,\bar \xi,\bar \eta)\}\in \mathcal S$
its \it  Segre variety. \rm Clearly, an $m$-admissible Segre
family is a particular example of a general Segre family.

We say that two (general) Segre families \it coincide, \rm if
there exists a nonempty open neighbourhood $G$ of the origin such
that for any point $p\in G$ the Segre varieties of $p$ in both
families coincide. Further, given a (general) Segre family
$\mathcal S$, from the implicit function theorem one concludes
that the antiholomorphic family of planar holomorphic curves
$$\mathcal
 S^*=\{\bar\eta=\rho(\bar\xi,z,w)\}$$ is also a (general) Segre family for
 some, possibly, smaller polydisc $\Delta_{\tilde\delta} \times \Delta_{\tilde\varepsilon}$.

 \begin{dfn}
 The Segre family $\mathcal S^*$ is called the \it dual Segre family for $\mathcal S$. \rm
 \end{dfn}

 The dual Segre family has a simple interpretation: in the defining equation of the family $S$ one should consider the
 parameters $\bar\xi,\bar\eta$ as new coordinates, and the variables $z,w$ as new parameters.
 We denote the Segre variety of a point $p$ with respect to the family $\mathcal S^*$ by $Q^*_p$.

\begin{lem} Suppose that $\mathcal S$ is a
positive (respectively, negative) $m$-admissible Segre family.
Then $\mathcal S^*$ is a negative (respectively, positive)
$m$-admissible Segre family. \end{lem}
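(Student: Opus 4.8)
The plan is to compute the dual family $\mathcal S^*$ directly from the duality prescription and read off its sign and admissibility. Assume $\mathcal S$ is positive, so by \eqref{phi} its defining relation is $w = \bar\eta\, e^{i\bar\eta^{m-1}\psi(z\bar\xi,\bar\eta)}$ with $\psi(x,y)=x+\sum_{k\geq2}\psi_k(y)x^k$; the negative case is identical after replacing $i$ by $-i$. Following the description of the dual, I rename the parameters $\bar\xi,\bar\eta$ as new coordinates $z^*:=\bar\xi,\ w^*:=\bar\eta$ and the variables $z,w$ as new (conjugated) parameters $\bar\xi^*:=z,\ \bar\eta^*:=w$, and then solve the defining relation for $w^*=\bar\eta$. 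Written in the new variables the relation reads
\[
\bar\eta^* = w^*\, e^{\,i(w^*)^{m-1}\psi(z^*\bar\xi^*,\,w^*)},
\]
and the task is to solve it for $w^*$ as a function of $(z^*,\bar\xi^*,\bar\eta^*)$.

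First I would observe that the right-hand side depends on $z^*,\bar\xi^*$ only through the product $s:=z^*\bar\xi^*$, so the solution does as well; setting $a:=\bar\eta^*$, it suffices to invert $a = w^* e^{i(w^*)^{m-1}\psi(s,w^*)}$ for $w^*=g(s,a)$. At $s=0$ we have $\psi(0,w^*)=0$, hence the right-hand side reduces to $w^*$, giving $g(0,a)=a$ and derivative $1$ in $w^*$ at the origin; the holomorphic implicit function theorem then produces a unique holomorphic solution $g(s,a)$ on a (possibly smaller) polydisc, and matching the order-$s$ terms yields $g(s,a)=a-i a^m s+O(s^2)$. The appearance of $-i$ (versus $+i$ in $\mathcal S$) already signals the predicted sign reversal, and the fact that $g$ depends on $z^*,\bar\xi^*$ only through $s=z^*\bar\xi^*$ gives the required product structure.

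The substantive step is to recast $g$ in the admissible exponential form of Definition 3.2, i.e. to exhibit $\psi^*$ with $w^*=a\,e^{-i a^{m-1}\psi^*(s,a)}$ and $\psi^*(x,y)=x+\sum_{k\geq2}\psi_k^*(y)x^k$, $\psi_k^*\in\OO(\Delta_\varepsilon)$. Dividing the relation $a=w^* e^{i(w^*)^{m-1}\psi(s,w^*)}$ by $w^*$ and taking the branch of the logarithm with $w^*/a\to1$ gives $a^{m-1}\psi^*(s,a)=(w^*)^{m-1}\psi(s,w^*)$, whence
\[
\psi^*(s,a)=\left(\frac{g(s,a)}{a}\right)^{m-1}\psi\bigl(s,\,g(s,a)\bigr).
\]
The main obstacle is precisely to see that $\psi^*$ is holomorphic at the origin despite the factor $a^{m-1}$ that would naively appear in a denominator: this is resolved because that $a^{m-1}$ is cancelled against $(w^*)^{m-1}=g^{m-1}$, and the ratio $g(s,a)/a=1+O(s)$ is holomorphic and nonvanishing near the origin. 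Since moreover $\psi(s,w^*)=s+O(s^2)$, the displayed formula shows $\psi^*(s,a)=s+\sum_{k\geq2}\psi_k^*(a)s^k$ with holomorphic coefficients, which is exactly the required form. Hence $w^*=\bar\eta^* e^{-i(\bar\eta^*)^{m-1}\psi^*(z^*\bar\xi^*,\bar\eta^*)}$ is a negative $m$-admissible Segre family.

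Finally, the negative case follows by the same computation with $i\mapsto-i$, producing a positive family; alternatively one can invoke that duality is involutive, $\mathcal S^{**}=\mathcal S$, so that the two implications are equivalent. I expect the only genuinely delicate points to be the bookkeeping of the coordinate/parameter relabeling in the definition of $\mathcal S^*$ and the holomorphy of $\psi^*$ just discussed; everything else is a direct application of the implicit function theorem.
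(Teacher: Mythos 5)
Your proof is correct and follows essentially the same route as the paper: one solves the dual defining relation $\bar\eta = w\,e^{\pm i w^{m-1}\psi(z\bar\xi,w)}$ for $w$ and reads off the reversed sign in the exponent. You are in fact more explicit than the paper, which settles the matter by a quick bootstrap substitution; your formula $\psi^*(s,a)=\left(g(s,a)/a\right)^{m-1}\psi\bigl(s,g(s,a)\bigr)$ and the check that the factor $a^{m-1}$ cancels (so that $\psi^*$ is holomorphic at $a=0$) make precise the step the paper leaves implicit.
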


\begin{proof} To obtain the defining function $\rho^*(z,\bar\xi,\bar\eta)$ of the general Segre
family $\mathcal S^*$ we solve for $w$ its defining equation
\begin{equation}\label{dual}
\bar\eta=w e^{\pm iw^{m-1}\left(z\bar\xi+\sum_{k\geq
 2}\psi_k(w)z^k\bar\xi^k\right)} .
\end{equation}
Note that \eqref{dual} implies
\begin{equation}\label{dual2}
w=\bar\eta e^{\mp iw^{m-1}(z\bar\xi+O(z^2\bar\xi^2))}.
\end{equation}
We then obtain from \eqref{dual2}
$w=\rho^*(z,\bar\xi,\bar\eta)=\bar\eta(1+O(z\bar\xi))$.
Substituting the latter representation into \eqref{dual2} gives
$w=\rho^*(z,\bar\xi,\bar\eta)=\bar\eta e^{\mp
i\bar\eta^{m-1}(z\bar\xi+O(z^2\bar\xi^2))}$, which proves the
lemma.
\end{proof}

We also consider the following Segre family, connected with $S$:
$$\bar{\mathcal S}=\{w=\bar\rho(z,\bar \xi,\bar \eta)\}.$$

 \begin{dfn} The Segre family $\bar{\mathcal S}$ is called \it the
 conjugated family of $\mathcal S$. \rm
 \end{dfn}

 If $\sigma: \CC{2}\longrightarrow\CC{2}$ is the antiholomorphic involution $(z,w)\longrightarrow(\bar
 z,\bar w)$, then one simply has $\sigma(Q_p) = \overline {Q_{\sigma(p)}}$.
 We  will denote the Segre variety of a point $p$ with respect to the family $\bar{\mathcal S}$ by
 $\bar Q_p$. It follows from the definition that if $\mathcal S$ is a
positive (respectively, negative) $m$-admissible Segre family,
then $\bar{\mathcal S}$ is a negative (respectively, positive)
$m$-admissible Segre family.

 In the same manner as for the case of an $m$-admissible Segre
 family, we say that a (general) Segre family $\mathcal S=\{w=\rho(z,\bar \xi,\bar
 \eta)\}$ has a \it real structure, \rm if there exists a smooth real-analytic
 hypersurface $M\subset\CC{2}$, passing through the origin, such
 that $\mathcal S$ is the Segre family of $M$.

 The use of the  dual and the conjugated Segre families stems from the following

 \begin{propos} A (general) Segre family $\mathcal S$ has a real
 structure if and only if the dual Segre family $S^*$ coincides
 with the conjugated one: $\mathcal S^*=\bar{\mathcal
 S}$.\end{propos}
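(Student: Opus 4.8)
The plan is to reduce \emph{both} sides of the asserted equivalence to a single explicit relation on the defining function $\rho$ of $\mathcal S=\{w=\rho(z,\bar\xi,\bar\eta)\}$, and then to recognize that relation as the reality condition~\eqref{realty}. First I would write out the three families at the level of defining equations: the Segre variety of a point $p=(a,b)$ is $Q_p=\{w=\rho(z,\bar a,\bar b)\}$, the conjugate family gives $\bar Q_p=\{w=\bar\rho(z,\bar a,\bar b)\}$, and the dual family gives $Q^*_p=\{\bar b=\rho(\bar a,z,w)\}$ (setting $\bar\xi=\bar a,\ \bar\eta=\bar b$ in $\mathcal S^*=\{\bar\eta=\rho(\bar\xi,z,w)\}$). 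The normalization $d\rho(0,0,0)=\bar\eta$ makes $\rho$ solvable for its last argument near $0$, so each of these equations can be written as a graph $w=w(z)$; hence equality of the two families $\mathcal S^*=\bar{\mathcal S}$, curve by curve after the usual shrinking of the polydisc, is equivalent to the biconditional
\begin{equation}\label{dagger}
w=\bar\rho(z,\bar a,\bar b)\ \Longleftrightarrow\ \bar b=\rho(\bar a,z,w)
\end{equation}
holding identically for $(a,b)$ near the origin.

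For the direction ``real structure $\Rightarrow \mathcal S^*=\bar{\mathcal S}$'' I would invoke the symmetry property~\eqref{e.svp}, already available for the Segre family of a real hypersurface $M$. If $\mathcal S$ is the Segre family of $M$, then comparing the graphs $Q_p$ with the Segre varieties of $M$ forces $\rho$ to be the complex defining function of $M$, so~\eqref{e.svp} reads $w=\rho(z,\bar a,\bar b)\Leftrightarrow b=\rho(a,\bar z,\bar w)$; conjugating the right-hand equation turns it into $w=\rho(z,\bar a,\bar b)\Leftrightarrow \bar b=\bar\rho(\bar a,z,w)$, which is exactly~\eqref{dagger} with $\rho$ and $\bar\rho$ interchanged. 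For the converse I would start from~\eqref{dagger}: it says that the solution of $w=\bar\rho(z,\bar a,\bar b)$ for $\bar b$ is $\bar b=\rho(\bar a,z,w)$, so substituting the latter back yields the identity $w\equiv\bar\rho\big(z,\bar z,\rho(\bar z,z,w)\big)$ (the free parameter $\bar a$ being renamed $\bar z$). This is precisely the reality condition~\eqref{realty} written for $\bar\rho$, so $\{w=\bar\rho(z,\bar z,\bar w)\}$ is a genuine real hypersurface whose Segre family is $\bar{\mathcal S}$, and therefore $\mathcal S$ itself carries a real structure.

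The step I would treat most carefully, and which is the only genuine subtlety, is the bookkeeping of complex conjugation: in both directions the relation produced differs from~\eqref{realty} by the coefficientwise conjugation $\rho\leftrightarrow\bar\rho$. This is harmless because coefficient conjugation is an involutive automorphism of the ring of convergent power series, so an identity such as $w\equiv\rho(z,\bar z,\bar\rho(\bar z,z,w))$ holds if and only if its conjugate $w\equiv\bar\rho(z,\bar z,\rho(\bar z,z,w))$ does; equivalently, $\mathcal S$ has a real structure if and only if $\bar{\mathcal S}$ does, since $\bar\rho$ defines the real hypersurface $\sigma(M)$ exactly when $\rho$ defines $M$. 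With this observation the chain
\[
\text{real structure}\iff\eqref{realty}\iff\eqref{e.svp}\iff\eqref{dagger}\iff \mathcal S^*=\bar{\mathcal S}
\]
closes and proves the proposition.
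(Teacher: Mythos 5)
Your proof is correct and follows essentially the same route as the paper: the forward direction via the symmetry property~\eqref{e.svp} applied to the Segre family of $M$, and the converse by substituting one defining equation of the biconditional into the other and recognizing the resulting identity as (the complexification of) the reality condition~\eqref{realty}. The only cosmetic difference is that you solve for $w$ rather than for $\bar\eta$, which lands you on the reality condition for $\bar\rho$ instead of $\rho$; your observation that coefficientwise conjugation is an involution disposes of this correctly.
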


 \begin{proof} Suppose that $\mathcal S$ is the Segre family at the origin of a
 real hypersurface $M\subset\CC{2}$ with the complex defining equation $w=\rho(z,\bar z,\bar
 w)$. Then $\mathcal S$ is given by $\{w=\rho(z,\bar \xi,\bar
 \eta)\},$  and if $(z,w)\in
Q^*_{(\xi,\eta)}$, then $\bar\eta=\rho(\bar\xi,z,w)$, so that
$(\bar \xi,\bar \eta)\in Q_{(\bar z,\bar w)}$. Then~\eqref{e.svp}
gives $(\bar z,\bar w)\in Q_{(\bar \xi,\bar\eta)}$, and so $(z,w)
\in \sigma(Q_{(\bar\xi,\bar\eta)}) = \bar Q_{(\xi,\eta)}$. In the
same way one shows that $(z,w)\in\bar Q_{(\xi,\eta)}$ implies
$(z,w)\in Q^*_{(\xi,\eta)}$, so that $\mathcal S^*=\bar{\mathcal
 S}$.

 If it is given now that $\mathcal S^*=\bar{\mathcal
 S}$, then
 $\left[\bar\eta=\rho(\bar\xi,z,w)\right]\Longleftrightarrow\left[w=\bar\rho(z,\bar \xi,\bar
 \eta)\right]$, which is possible only if
 $$\bar\eta\equiv\rho(\bar\xi,z,\bar\rho(z,\bar\xi,\bar\eta)).$$
 Changing notations and replacing in the latter identity the variables $\bar\eta,\bar\xi,z$
 by the variables  $w,z,\bar\xi$ respectively, we obtain the
 complexification of the reality condition \eqref{realty}. Hence, the equation $w=\rho(z,\bar z,\bar w)$
 determines the germ at the origin of a smooth real-analytic hypersurface $M$. This proves the proposition.
\end{proof}

We next transfer the above real structure criterion from
$m$-admissible families to the associated ODEs.

\begin{dfn}
Let $\mathcal E$ be an $m$-admissible ODE. We say that an
$m$-admissible  ODE $\mathcal E^*$ is \it dual  to $\mathcal E$,
\rm if the negative $m$-admissible Segre family dual to the family
$\mathcal S_{\mathcal E}^+$ is associated with $\mathcal E^*$,
i.e.,
$$
\mathcal E^* {\rm \ is\ dual\ to\ } \mathcal E \
\Longleftrightarrow \ (\mathcal S^+_{\mathcal E})^* =
S^-_{\mathcal E^*}.
$$
In the same manner, we say that an $m$-admissible ODE
$\overline{\mathcal E}$ is \it conjugated to $\mathcal E$, \rm if
the negative $m$-admissible Segre family conjugated to the family
$\mathcal S_{\mathcal E}^+$, is associated with
$\overline{\mathcal E}$, i.e.,
$$
\bar{\mathcal E} {\rm \ is\ conjugated\ to\ } \mathcal E \
\Longleftrightarrow \ \bar{\mathcal S}^+_{\mathcal E} =
S^-_{\bar{\mathcal E}}.
$$
\end{dfn}

From Proposition 3.5 we conclude that both the conjugated and the
dual ODEs are unique (if exist). The existence of the conjugated
ODE is obvious: if $\mathcal E$ is given by
$z''=\frac{P(w)}{w^m}z'+\frac{Q(w)}{w^{2m}}z,$ then, clearly, the
desired ODE $\overline{\mathcal E}$  is given explicitly by
\begin{equation}\label{conjugatedODE}
z''=\frac{\bar P(w)}{w^m}z'+\frac{\bar
Q(w)}{w^{2m}}z.\end{equation} However, the existence of the dual
ODE is a more subtle issue. To prove it, we first need

\begin{propos}[Transversality Lemma]
Let $\mathcal S$ be an $m$-admissible Segre family in a polydisc
$\Delta_\delta\times\Delta_\varepsilon$, and $X$ be its singular
locus. After possibly shrinking the polydisc
$\Delta_\delta\times\Delta_\varepsilon$,
 the following property holds: if $p,q\in(\Delta_\delta\times\Delta_\varepsilon)\setminus X$,
 $p\ne q$, and $Q_p$ and $Q_q$ intersect at a point $r$, then their
intersection at $r$ is transverse.
\end{propos}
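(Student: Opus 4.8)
The plan is to reduce transversality to the injectivity of a $1$-jet map and then to defeat the degeneration of that map along the singular locus $X$ by a weighted change of coordinates on the target. First I would localize. Since $p,q\notin X$, Proposition 3.3(i) gives $Q_p\cap X=Q_q\cap X=\emptyset$, so the intersection point $r=(z_0,w_0)$ satisfies $w_0\ne 0$. Near $r$ both Segre varieties are graphs $w=w(z)$ over $z$ (the family is written as $w=\rho(z,\bar\xi,\bar\eta)$), so the intersection fails to be transverse at $r$ exactly when the two graphs have the same value and the same first derivative at $z=z_0$. Writing $\Phi_{z_0}:(\xi,\eta)\mapsto\bigl(w(z_0),w'(z_0)\bigr)$ for the $1$-jet map of the family at $z=z_0$, non-transversality therefore means $\Phi_{z_0}(p)=\Phi_{z_0}(q)$. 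Hence it suffices to prove that, after shrinking $\Delta_\delta\times\Delta_\varepsilon$, the map $\Phi_{z_0}$ is injective on $(\Delta_\delta\times\Delta_\varepsilon)\setminus X$ for every $z_0\in\Delta_\delta$; injectivity then forces $p=q$, contradicting $p\ne q$, so distinct Segre varieties must meet transversally.

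From the explicit form of $\mathcal S$ one computes $w(z_0)=\bar\eta\, e^{\pm i\bar\eta^{m-1}\psi(z_0\bar\xi,\bar\eta)}$ and $w'(z_0)=\pm i\,\bar\xi\,\bar\eta^{m-1}\dot\psi(z_0\bar\xi,\bar\eta)\,w(z_0)$ (consistent with $w(0)=\bar\eta$, $w'(0)=\pm i\bar\xi\bar\eta^{m}$ from Proposition 3.3). The Jacobian of $\Phi_{z_0}$ then behaves like $\bar\eta^{m}$ and degenerates along $X$. This degeneration is precisely the main obstacle: it prevents one from concluding injectivity from a naive perturbation-of-the-linear-part argument near $X$, where $\Phi_{z_0}$ is far from an immersion in the $\bar\eta$-direction, and where ``close but not equal'' estimates are not enough to separate two parameter values.

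To remove the degeneration I would compose with the target biholomorphism $(w,w')\mapsto(w,w'/w^{m})$, well defined where $w\ne 0$. A direct simplification using $w=\bar\eta\,e^{\pm i\bar\eta^{m-1}\psi}$ gives $w'/w^{m}=\pm i\,\bar\xi\,\dot\psi(z_0\bar\xi,\bar\eta)\,e^{\mp i(m-1)\bar\eta^{m-1}\psi(z_0\bar\xi,\bar\eta)}$, in which the factor $\bar\eta^{m-1}$ has cancelled. Consequently the map $\Xi_{z_0}:(\bar\xi,\bar\eta)\mapsto\bigl(w(z_0),\,w'(z_0)/w(z_0)^{m}\bigr)$ extends holomorphically across $X$, and, since $\psi(x,y)=x+O(x^{2})$ and $\dot\psi=1+O(x)$, it takes the form $\Xi_{z_0}(\bar\xi,\bar\eta)=(\bar\eta,\pm i\bar\xi)+R_{z_0}(\bar\xi,\bar\eta)$, where $R_{z_0}$ is holomorphic, vanishes to second order at the origin, and is bounded by $O(\rho^{2})$ on a polydisc of radius $\rho$, uniformly in $z_0\in\Delta_\delta$. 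Thus $\Xi_{z_0}$ is a uniformly small perturbation of a fixed linear isomorphism.

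Finally I would invoke univalence. By the Cauchy estimates the $C^{0}$-smallness of $R_{z_0}$ yields $C^{1}$-smallness on a slightly smaller polydisc, so after one further shrinking of $\Delta_\delta\times\Delta_\varepsilon$ the bound $\sup\|DR_{z_0}\|<\|L^{-1}\|^{-1}$ holds for the linear part $L(\bar\xi,\bar\eta)=(\bar\eta,\pm i\bar\xi)$, and hence $\Xi_{z_0}$ is injective on the whole polydisc, simultaneously for all $z_0$. Because $(w,w')\mapsto(w,w'/w^{m})$ is injective wherever $w\ne0$, the equality $\Phi_{z_0}(p)=\Phi_{z_0}(q)$ (which involves only points off $X$, where $w_0\ne0$) forces $\Xi_{z_0}(p)=\Xi_{z_0}(q)$ and therefore $p=q$, the required contradiction. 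The only genuinely delicate point is the third step, namely recognizing that the weight $w^{-m}$ exactly absorbs the $\bar\eta^{m-1}$ that makes the bare $1$-jet map degenerate along the singular locus.
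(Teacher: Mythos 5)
Your proposal is correct and follows essentially the same route as the paper: the paper's proof likewise passes to the rescaled $1$-jet map $(\xi,\eta)\mapsto\left(w(z),\,w'(z)/w^m(z)\right)=\left(\bar\eta+O(z\bar\xi\bar\eta),\,\pm i\bar\xi+O(z\bar\xi)\right)$, makes it injective (uniformly in $z$) after shrinking the polydisc, and then deduces injectivity of $(\xi,\eta)\mapsto(w(z),w'(z))$ off $X$. Your write-up merely fills in the perturbation/Cauchy-estimate details of the injectivity step that the paper leaves implicit.
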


\begin{proof}
Suppose first that $\mathcal S$ is positive. Take an arbitrary
$p=(\xi,\eta)\in (\Delta_\delta\times\Delta_\varepsilon)\setminus
X$ and consider $Q_p$ as a graph $w=w(z)=\bar\eta
e^{i\bar\eta^{m-1}(z\bar\xi+O(z^2\bar\xi^2))}.$ Then
\begin{equation}\label{approximation}
w=\bar\eta+O(z\bar\xi\bar\eta),\,\frac{w'}{w^m}=i\bar\xi+O(z\bar\xi).
\end{equation}
The latter implies that by shrinking the polydisc
$\Delta_\delta\times\Delta_\varepsilon$, one can make the map
$$
(\xi,\eta)\longrightarrow\left(w(z),\frac{w'(z)}{w^m(z)}\right),$$
which is defined for each $z$, injective in
$(\Delta_\delta\times\Delta_\varepsilon)\setminus X$ (once for all
$z$). Then the same property holds for the map
$$
(\xi,\eta)\longrightarrow\left(w(z),w'(z)\right),
$$
which shows that the graphs $Q_p$ and $Q_q$ cannot have the same
slope at a point of intersection. The proof for the negative case
is analogous. \end{proof}

\begin{propos} Let $\mathcal E$ be an $m$-admissible ODE.
Then the dual ODE $\mathcal E^*$ always exists.\end{propos}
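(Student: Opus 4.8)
The plan is to show that the unique second-order ODE satisfied by the dual family is the inverse ODE of an $m$-admissible equation. Set $\mathcal S^*:=(\mathcal S^+_{\mathcal E})^*$. By Lemma 3.10, $\mathcal S^*$ is a negative $m$-admissible Segre family, which I write in standard form as $w=\bar\eta\,e^{-i\bar\eta^{m-1}\chi(z\bar\xi,\bar\eta)}$ with $\chi(x,y)=x+\sum_{k\geq 2}\chi_k(y)x^k$, $\chi_k\in\mathcal O(\Delta_\varepsilon)$. By Definition 3.4 together with the uniqueness part of Proposition 3.5, it suffices to produce holomorphic $P^*,Q^*$ such that every member of $\mathcal S^*$, viewed as a graph $w=w(z)$, satisfies the inverse ODE \eqref{inverse} with $P,Q$ replaced by $P^*,Q^*$. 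First, applying the Transversality Lemma (Proposition 3.13) to the $m$-admissible family $\mathcal S^*$ and then the holomorphic fundamental ODE theorem, I obtain a \emph{unique} second-order holomorphic ODE $w''=\Phi^*(z,w,w')$ satisfied by the members of $\mathcal S^*$ off the singular locus $X=\{w=0\}$. The whole problem is thus reduced to identifying the form of $\Phi^*$.

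The decisive point is that $\Phi^*$ is a \emph{cubic} polynomial in $w'$. This is where the linearity of $\mathcal E$ enters, and it is the step I expect to be the main obstacle, since for a generic $m$-admissible family the associated ODE is not cubic, hence not admissible. To see cubicity, recall that $\mathcal E$ is a homogeneous linear equation, so its solutions form a two-dimensional space with a basis $z_1(w),z_2(w)$; by Definition 3.4 the members of $\mathcal S^+_{\mathcal E}$ are exactly the curves $\{(z,w):\,z=c_1 z_1(w)+c_2 z_2(w)\}$, and by the injectivity of the Segre map (Proposition 3.3) the assignment $p=(\xi,\eta)\mapsto(c_1,c_2)$ is a biholomorphism $G$ onto its image. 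Now the incidence condition $(z_0,w_0)\in Q_p$, i.e. $z_0=c_1 z_1(w_0)+c_2 z_2(w_0)$, is \emph{linear} in $(c_1,c_2)$; hence each dual curve $Q^*_{(z_0,w_0)}=\{p:\,(z_0,w_0)\in Q_p\}$ is, in the coordinates $(c_1,c_2)$, a straight line. Consequently $\mathcal S^*$ is carried by the point transformation $G$ to a family of lines, i.e. to solutions of the flat equation $y''=0$; since point transformations take $y''=0$ to equations whose right-hand side is a cubic polynomial in the first derivative (the classical linearization fact recalled in Subsection 2.3), $\Phi^*$ is cubic in $w'$.

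Finally I normalize the shape of $\Phi^*$ and read off the pole orders. The dilation $(z,w)\mapsto(\lambda z,w)$ permutes the members of $\mathcal S^*$, because $\chi$ depends on $z$ and $\bar\xi$ only through the product $z\bar\xi$; hence $\Phi^*$ is invariant under the prolongation of this dilation, which acts by $w'\mapsto\lambda^{-1}w'$ and $w''\mapsto\lambda^{-2}w''$. Combined with cubicity and the holomorphy of $\Phi^*$ in $z$ near $z=0$ (for $w\neq 0$), this homogeneity kills the terms of degree $0$ and $1$ in $w'$ and forces $\Phi^*=\tilde a_2(w)(w')^2+z\,\tilde a_3(w)(w')^3$. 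Computing the leading terms of the elimination of $(\bar\xi,\bar\eta)$ exactly as in the proof of Proposition 3.5 — using $w=\bar\eta(1+O(z\bar\xi))$ and $w'/w^m=-i\bar\xi+O(z\bar\xi)$ — yields $\tilde a_2=-P^*(w)/w^m$ and $\tilde a_3=-Q^*(w)/w^{2m}$ with $P^*(w)=-2i\chi_2(w)-w^{m-1}$ and $Q^*$ given by the analogue of \eqref{relation33}; in particular $P^*,Q^*\in\mathcal O(\Delta_\varepsilon)$. Thus $\mathcal E^*:\ z''=\frac{P^*(w)}{w^m}z'+\frac{Q^*(w)}{w^{2m}}z$ is $m$-admissible and satisfies $(\mathcal S^+_{\mathcal E})^*=\mathcal S^-_{\mathcal E^*}$, so it is the desired dual ODE; its uniqueness follows from the injectivity statement in Proposition 3.5.
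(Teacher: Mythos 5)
Your proof is correct and reaches the same endpoint as the paper's, but the middle of the argument is organized differently, so a comparison is worthwhile. Both proofs rest on the same two inputs: the linearity of $\mathcal E$ (so that every $Q_p$ is a combination $z=c_1h_1(w)+c_2h_2(w)$ of two independent solutions) and the Transversality Lemma. The paper exploits linearity on the \emph{target} side: it shows that $\bar\xi\lambda_i$ depends only on $\bar\eta$, rewrites the incidence relation as $z\bar\xi=\tau_1(\bar\eta)h_1(w)+\tau_2(\bar\eta)h_2(w)$, reads off by symmetry of this relation that each dual variety is a combination of the two fixed functions $\tau_1(w),\tau_2(w)$, and then proves via a $1$-jet/transversality argument that the Wronskian of $\tau_1,\tau_2$ is nonvanishing, so the dual family satisfies a \emph{linear} second-order ODE $z''=A(w)z'+B(w)z$ whose inverse is automatically cubic. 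You instead exploit linearity on the \emph{parameter} side: in the coordinates $(c_1,c_2)$ the dual varieties become straight lines, so the dual family is point-equivalent to $y''=0$ and its ODE is cubic in $w'$ by the classical Tresse--Cartan fact; you then use the rotational symmetry $(z,w)\mapsto(\lambda z,w)$ of an admissible family to force the two-term shape $\tilde a_2(w)(w')^2+z\,\tilde a_3(w)(w')^3$, something the paper instead extracts by comparing its two representations of the dual ODE. The final step -- matching low-order coefficients against the admissible expansion of $\mathcal S^*$ to see that $P^*=-w^m\tilde a_2$ and $Q^*=-w^{2m}\tilde a_3$ extend holomorphically through $w=0$ -- is the same in both arguments. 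What your route buys is a cleaner conceptual reason for cubicity and a slick symmetry reduction; what the paper's route buys is an explicit fundamental system $\{\tau_1,\tau_2\}$ for $\mathcal E^*$ (used again later, e.g.\ in the sphericity argument) and a more careful treatment of the degenerate loci.

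A few small caveats, all at the level of rigor the paper itself tolerates but worth being aware of: the map $G:p\mapsto(c_1,c_2)$ equals $(\tau_1(\bar\eta)/\bar\xi,\tau_2(\bar\eta)/\bar\xi)$, so it is multivalued (the $h_i$ have monodromy) and blows up along $\{\xi=0\}$; your cubicity identity therefore holds a priori only on a dense open set and must be propagated by the identity theorem to all of the domain of $\Phi^*$, and the claim that the lines form a locally \emph{full} two-parameter family (needed for the unique ODE in the $(c_1,c_2)$-chart to be $y''=0$) uses the nonvanishing of the Wronskian of $h_1,h_2$ and requires excluding $z=0$. Also two citation slips: the statement that $\mathcal S^*$ is a negative $m$-admissible family is Lemma~3.8, not 3.10, and the Transversality Lemma is Proposition~3.12, not 3.13; and the ``point transformations of $y''=0$ give right-hand sides cubic in $y'$'' fact is classical (Arnold, Tresse) but is not actually stated in Subsection~2.3.
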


\begin{proof}
Let $\Delta_\delta\times\Delta_\varepsilon$ be the polydisc where
$\mathcal S^+_{\mathcal E}$ is defined, and $X$ be the singular
locus. For simplicity, we will assume that the dual family is
defined in the same polydisc. Consider two (possibly
multiple-valued) linearly independent solutions $h_1(w),h_2(w)$ of
$\mathcal E$ in the punctured disc $\Delta^*_\varepsilon$. Then,
by the definition of the associated Segre family, for any
$p=(\xi,\eta)\in (\Delta_\delta\times\Delta_\varepsilon)\setminus
X$, $\xi\neq 0$, $\eta\ne 0$, there exist unique complex numbers
$\lambda_1(\bar\xi,\bar\eta),\lambda_2(\bar\xi,\bar\eta)$ such
that $Q_p$ is contained in the graph
$$z=\lambda_1(\bar\xi,\bar\eta)h_1(w)+\lambda_2(\bar\xi,\bar\eta)h_2(w).$$
As the family $S$ depends on the parameters holomorphically,
$\lambda_1(\bar\xi,\bar\eta),\lambda_2(\bar\xi,\bar\eta)$ are two
(possibly multiple-valued) analytic functions in
$\Delta^*_\delta\times\Delta^*_\varepsilon$.

We claim that $\bar\xi\lambda_1(\bar\xi,\bar\eta)$ and
$\bar\xi\lambda_2(\bar\xi,\bar\eta)$ are independent of $\bar
\xi$. Indeed, we note that from the defining equation~\eqref{phi}
the expression $z\bar\xi$ for the family $\mathcal S$ depends only
on $w$ and $\bar\eta$, so that in some polydisc $U$ in
$\Delta_\delta\times\Delta^*_\epsilon\times\Delta_\delta\times\Delta^*_\epsilon$
we have $\lambda_1(\bar\xi,\bar\eta)\bar\xi
h_1(w)+\lambda_2(\bar\xi,\bar\eta)\bar\xi h_2(w)=\Psi(w,\bar\eta)$
for an appropriate holomorphic function $\Psi$. Differentiating
the latter equality w.r.t. $w$ and solving a system of linear
equations w.r.t. $\lambda_1\bar\xi,\lambda_2\bar\xi$, we get
\begin{equation}\label{separate}
(\lambda_1(\bar\xi,\bar\eta)\bar\xi,\lambda_2(\bar\xi,\bar\eta)\bar\xi)=(\Psi(w,\bar\eta),\Psi_w(w,\bar\eta))\cdot
H^{-1}(w),
\end{equation}
where $H(w)$ is the Wronski matrix for the linearly independent
functions $h_1(w),h_2(w)$ (we consider the single-valued branches
of these functions, defined in the polydisc $U$). As the
right-hand side of~\eqref{separate} depends on $\bar\eta$ only, we
conclude that
$\lambda_1(\bar\xi,\bar\eta)\bar\xi,\lambda_2(\bar\xi,\bar\eta)\bar\xi$
are independent of $\bar\xi$, which proves the claim.

It follows from the claim that each $Q_p$ as above is contained in
the graph
$$z\bar\xi=\tau_1(\bar\eta)h_1(w)+\tau_2(\bar\eta)h_2(w)$$ for
some (possibly multiple-valued) analytic in $\Delta^*_\varepsilon$
functions $\tau_1(\bar\eta),\tau_2(\bar\eta)$. It follows then
that for any $p=(\xi,\eta)\in
(\Delta_\delta\times\Delta_\varepsilon)\setminus X,\,\xi\neq 0$,
the dual Segre variety $Q^*_p$  is contained in the graph
\begin{equation}\label{dualgraph}
z\bar\xi=\tau_1(w)h_1(\bar\eta)+\tau_2(w)h_2(\bar\eta).\end{equation}

We claim now that the Wronskian $d(w)=\begin{vmatrix} \tau_1(w) &
\tau_2(w)\\ \tau'_1(w) & \tau'_2(w)\end{vmatrix}$ does not vanish
 in $\Delta^*_\varepsilon$. Indeed, suppose otherwise, that $d(w_0)=0$
 for some $w_0$, and let $(0,w_0) \in Q^*_{(\xi_0,\eta_0)}$ for some $(\xi_0,\eta_0),\,\xi_0\neq 0$ (one can take $(\xi_0,\eta_0)=(\xi,\bar w_0)$ for some $\xi\in \Delta^*_\delta$).
 We seek all $(\xi,\eta)$ such that $Q^*_{(\xi,\eta)}$ passes through  $(0,w_0)$ and has
 $1$-jet there the same as $Q^*_{(\xi_0,\eta_0)}$. Clearly, such $(\xi,\eta)$ are given by
 \begin{equation}\label{e.h12}
\left( \begin{matrix} \tau_1(w_0) & \tau_2(w_0)\\ \tau'_1(w_0) &
\tau'_2(w_0)
\end{matrix} \right)\cdot \frac{1}{\bar\xi}
\left( \begin{matrix} h_1(\bar\eta) \\
h_2 (\bar\eta)
\end{matrix} \right) = \left( \begin{matrix} 0 \\ \alpha
\end{matrix} \right) ,
\end{equation}
 where $\alpha = (h_1(\bar\eta_0)\tau_1'(w_0) + h_2(\bar\eta_0)\tau_2'(w_0))/\bar\xi_0$.
If we think of $\left(\frac{1}{\overline\xi}h_1,
\frac{1}{\overline\xi}h_2\right)$ as the unknown variables in the
above linear system, then since $d(w_0)=0$, its solution contains
an affine line $L$, passing through
$\left(\frac{1}{\overline\xi_0}h_1(\bar\eta_0),
\frac{1}{\overline\xi_0}h_2(\bar\eta_0)\right)$. The linear
independence of $h_1(w)$ and $h_2(w)$ implies that the map
$H:\,(\xi,\eta)\lr\left(\frac{1}{\xi}h_1(\eta),
\frac{1}{\xi}h_2(\eta)\right)$ is locally biholomorphic near
$(\bar\xi_0,\bar\eta_0)$ so that there exist points $(\xi,\eta)$
near $(\xi_0,\eta_0)$ with $(\xi,\eta)\neq (\xi_0,\eta_0)$ and
$H(\bar\xi,\bar\eta)\in L$. We conclude that there exists a
$1$-dimensional family of dual Segre varieties $Q^*_p$, passing
through the point $(0,w_0)$ that have the same 1-jet at the point
$w=w_0$.  But this contradicts Proposition~3.12, and so $d(w_0)\ne
0$.

It follows that the graphs \eqref{dualgraph} satisfy the linear
differential equation
$$W(z,\tau_1,\tau_2)=\begin{vmatrix} z & \tau_1(w) & \tau_2(w)\\ z' & \tau'_1(w) &
\tau'_2(w)\\ z'' & \tau''_1(w) & \tau''_2(w)\end{vmatrix}=0,$$
which can be rewritten as
\begin{equation}\label{dualdirect}
z''=A(w)z'+B(w)z,
\end{equation}
with its inverse ODE equal to
\begin{equation}\label{dualinverse}
w''=-A(w)(w')^2-B(w)(w')^3 z
\end{equation}
for some holomorphic in $\Delta^*_\varepsilon$ functions
$A(w),B(w)$. The relation \eqref{dualinverse} is satisfied by
every dual Segre variety $Q^*_p$, $p\in
(\Delta_\delta\times\Delta_\varepsilon)\setminus X$.

On the other hand, we may consider relations
\eqref{approximation}, applied to the dual family $\mathcal S^*$,
and use them to obtain a second order ODE satisfied by all
$Q^*_p,p\in (\Delta_\delta\times\Delta_\varepsilon)\setminus X$.
To do that, we apply the implicit function theorem in
\eqref{approximation} and obtain
$$\bar\xi=\Lambda\left(z,w,\frac{w'}{w^m}\right),\,\bar\eta=\Omega\left(z,w,\frac{w'}{w^m}\right)$$
for some functions
$\Lambda(z,w,\zeta)=i\zeta+O(z\zeta),\Omega(z,w,\zeta)=w+O(zw\zeta)$,
holomorphic in a polydisc $V\subset\CC{3}$, centred at the origin.
We next differentiate twice the relation \eqref{phi}, applied to
the dual family $\mathcal S^*$, w.r.t. $z$ and get
$w''=O(\bar\xi^2\bar\eta^m)$. Plugging in the latter
representation
$\bar\xi=\Lambda(z,w,\frac{w'}{w^m}),\,\bar\eta=\Omega(z,w,\frac{w'}{w^m})$,
one gets a second order ODE \begin{equation}\label{dualODE}
w''=\Phi\left(z,w,\frac{w'}{w^m}\right)\end{equation} for some
function $\Phi(z,w,\zeta)$, holomorphic in a polydisc $\tilde
V\subset\CC{3}$, centred at the origin (compare this with the
procedure in Section~2.2). The ODE \eqref{dualODE} is satisfied by
all $Q^*_p$ with $p\in
(\Delta_\delta\times\Delta_\varepsilon)\setminus X$. The function
$\Phi(z,w,\zeta)$ also satisfies $\Phi(z,w,\zeta)=O(\zeta^2w^m)$.
We now compare \eqref{dualODE} with \eqref{dualinverse}. We put
$\zeta:=\frac{w'}{w^m}$ and observe that in some domain $G\subset
\tilde V,$ $\Phi(z,w,\zeta)=-A(w)w^{2m}\zeta^2-B(w)w^{3m}\zeta^3
z$, which shows that the function $\Phi(z,w,\zeta)$ is cubic
w.r.t. the third argument. Since, in addition,
$\Phi(z,w,\zeta)=O(\zeta^2w^m)$, we conclude  that the function
$\Phi(z,w,\zeta)$ has the form
$w^m(\Phi_2(w)\zeta^2+\Phi_3(w)\zeta^3 z)$ for some functions
$\Phi_2(w)$ and $\Phi_3(w)$ holomorphic in a disc
$\Delta_r\subset\CC{},r>0$. Then the substitution
$\zeta=\frac{w'}{w^m}$ turns \eqref{dualODE} into an
$m$-admissible ODE, rewritten in the inverse form. This proves the
proposition.
\end{proof}

Combining Proposition 3.13 with Propositions 3.5 and 3.10, we
immediately obtain a crucial

\begin{corol} An $m$-admissible ODE $\mathcal E$ has a positive real
structure if and only if the conjugated ODE coincides with the
dual one: $\mathcal E^*=\overline{\mathcal E}$. \end{corol}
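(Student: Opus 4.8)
The plan is to obtain the corollary as a short chain of equivalences, each link coming from a result already proved in this section; the only genuine content is to verify that the dual and the conjugated objects are both of the \emph{negative} $m$-admissible type, so that they may legitimately be compared through the injectivity in Proposition~3.5.

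First I would translate the hypothesis from the language of ODEs to that of Segre families. By Definition~3.6, saying that $\mathcal E$ has a positive real structure means exactly that the associated positive family $\mathcal S^+_{\mathcal E}$ --- which exists and is unique by Proposition~3.5 --- has a real structure as a general Segre family. At this point Proposition~3.10 applies with $\mathcal S=\mathcal S^+_{\mathcal E}$, giving the equivalence
$$
\mathcal S^+_{\mathcal E}\ \text{has a real structure}\ \Longleftrightarrow\ (\mathcal S^+_{\mathcal E})^*=\bar{\mathcal S}^+_{\mathcal E}.
$$

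Next I would convert this equality of families into an equality of ODEs. Proposition~3.13 furnishes the dual ODE $\mathcal E^*$, characterized by $(\mathcal S^+_{\mathcal E})^*=\mathcal S^-_{\mathcal E^*}$ (Definition~3.11), while the explicit formula~\eqref{conjugatedODE} furnishes the conjugated ODE $\overline{\mathcal E}$, characterized by $\bar{\mathcal S}^+_{\mathcal E}=\mathcal S^-_{\overline{\mathcal E}}$. By Lemma~3.8 and the remark following Definition~3.9, both $(\mathcal S^+_{\mathcal E})^*$ and $\bar{\mathcal S}^+_{\mathcal E}$ are negative $m$-admissible families, so the displayed equality reads $\mathcal S^-_{\mathcal E^*}=\mathcal S^-_{\overline{\mathcal E}}$. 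I would then close the argument by invoking the injectivity of the correspondence $\mathcal E\longrightarrow\mathcal S^-_{\mathcal E}$ from Proposition~3.5, which upgrades this to $\mathcal E^*=\overline{\mathcal E}$. The step demanding the most care is this last bookkeeping rather than any computation: one must ensure every object in the chain has been normalized to the negative type, since Proposition~3.5 asserts injectivity separately for the positive and the negative correspondences.
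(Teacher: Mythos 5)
Your argument is correct and is essentially the paper's own proof: the paper obtains the corollary by ``combining Proposition 3.13 with Propositions 3.5 and 3.10,'' which is precisely the chain of equivalences you describe. Your version simply spells out the bookkeeping (via Lemma~3.8 and the remark after Definition~3.9) that both $(\mathcal S^+_{\mathcal E})^*$ and $\bar{\mathcal S}^+_{\mathcal E}$ are negative $m$-admissible families before invoking the injectivity of $\mathcal E\longrightarrow\mathcal S^-_{\mathcal E}$, a step the paper leaves implicit.
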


It is possible now to prove the main result of this section.

\begin{thm}
Let $\mathcal E:\,z''=\frac{P(w)}{w^m}z'+\frac{Q(w)}{w^{2m}}z$ be
an $m$-admissible ODE, $w\in\Delta_r,r>0$. Then $\mathcal E$ has a
positive real structure if and only if the functions $P(w),Q(w)$
have the form:
\begin{equation}\label{keyformulas}
P(w)=2ia(w)-mw^{m-1},\ Q(w)=b(w)+iw^ma'(w),\end{equation} where
$a(w)=\sum\limits_{j=0}^\infty a_jw^j,\,a_j\in\RR{}$, and
$b(w)=\sum\limits_{j=0}^\infty b_jw^j,\,b_j\in\RR{}$, are
convergent in $\Delta_r$ power series. Moreover, if $\mathcal E$
has a positive real structure, then the associated real
hypersurface $M\subset\CC{2}$ is Levi nondegenerate and spherical
outside the complex locus $X=\{w=0\}$.
\end{thm}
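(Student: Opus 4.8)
The plan is to translate the real-structure condition into explicit identities on $P$ and $Q$ by running it through the dual/conjugate machinery. By Corollary~3.14, $\mathcal E$ has a positive real structure if and only if $\mathcal E^*=\overline{\mathcal E}$. Since an $m$-admissible ODE is completely determined by the pair $(\psi_2,\psi_3)$ of its associated Segre family through the relations \eqref{relation22}--\eqref{relation33}, I would decode this equality coefficientwise. Write $\psi_2^*,\psi_3^*$ for the coefficients of the dual family $(\mathcal S^+_{\mathcal E})^*=\mathcal S^-_{\mathcal E^*}$, and observe that the conjugate family $\bar{\mathcal S}^+_{\mathcal E}=\mathcal S^-_{\overline{\mathcal E}}$ has coefficients $\bar\psi_2,\bar\psi_3$, since conjugation flips the sign in the exponent of \eqref{phi} and conjugates each $\psi_k$. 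Applying \eqref{relation22}--\eqref{relation33} with the negative sign to both sides, the equality $\mathcal E^*=\overline{\mathcal E}$ reduces to the two scalar identities $\psi_2^*=\bar\psi_2$ and $\psi_3^*=\bar\psi_3$ (these involve only $\psi_2,\psi_3$).

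The heart of the argument, and the only genuine obstacle, is computing $\psi_2^*,\psi_3^*$ in terms of $\psi_2,\psi_3$. Here I would start from the dual defining relation obtained by exchanging coordinates and parameters in \eqref{phi}, namely
\[
\bar\eta = w\,e^{\,iw^{m-1}\left(s+\sum_{k\ge2}\psi_k(w)s^k\right)},\qquad s:=z\bar\xi,
\]
take logarithms, and invert it into the normalized negative form $w=\bar\eta\,e^{-i\bar\eta^{m-1}(s+\sum_{k\ge2}\psi_k^*(\bar\eta)s^k)}$ by matching powers of $s$. Substituting one logarithmic relation into the other collapses the problem to the single identity $\bar\eta^{m-1}\big(s+\sum\psi_k^*s^k\big)=w^{m-1}\big(s+\sum\psi_k(w)s^k\big)$, where $w=\bar\eta(1-i\bar\eta^{m-1}s+\cdots)$ is read off from the normalized form. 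Expanding to order $s^3$ (the only delicate point being the binomial expansion of $w^{m-1}$ and the chain-rule term $w^m\psi_2'$) yields
\begin{gather*}
\psi_2^*=\psi_2-i(m-1)w^{m-1},\\
\psi_3^*=\psi_3-iw^m\psi_2'-2i(m-1)w^{m-1}\psi_2-\tfrac{3}{2}(m-1)^2w^{2m-2},
\end{gather*}
after renaming $\bar\eta\mapsto w$.

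With these formulas in hand the equivalence is transparent. The identity $\psi_2^*=\bar\psi_2$ forces every coefficient of $\psi_2$ to be real except the degree-$(m-1)$ one, whose imaginary part must equal $\tfrac{m-1}{2}$; hence $\psi_2=a+\tfrac{i(m-1)}{2}w^{m-1}$ with $a$ a real series, and \eqref{relation22} gives exactly $P=2ia-mw^{m-1}$. Substituting this $\psi_2$, the identity $\psi_3^*=\bar\psi_3$ becomes $\psi_3-\bar\psi_3=i\big(w^m a'+2(m-1)w^{m-1}a\big)$, the $w^{2m-2}$ terms cancelling automatically, so that $\Re\psi_3=:r$ remains a free real series while $\Im\psi_3$ is determined. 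Plugging $\psi_2$ and $\psi_3=r+\tfrac{i}{2}\big(w^ma'+2(m-1)w^{m-1}a\big)$ into \eqref{relation33}, the imaginary part collapses to $iw^ma'$ and the real part to the real series $b:=6r-8a^2+2(m-1)^2w^{2m-2}$, giving $Q=b+iw^ma'$. Reading the chain of equivalences backwards establishes the converse, so $\mathcal E$ has a positive real structure precisely when $(P,Q)$ are of the form \eqref{keyformulas}.

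For the last assertion, Levi nondegeneracy of the associated $M$ off $X$ follows from the transversality of its Segre family (Proposition~3.12) together with the injectivity of the Segre map (Proposition~3.3), which by the computation in Section~2.2 is exactly the nonvanishing of the Levi determinant away from $X$. Sphericity is then a consequence of linearity: off $X$ the Segre varieties of $M$ are the integral curves $z=c_1h_1(w)+c_2h_2(w)$ of the linear equation $\mathcal E$, and the point transformation $(z,w)\mapsto\big(z/h_2(w),\,h_1(w)/h_2(w)\big)$, a local biholomorphism near every point off $X$ (the Wronskian of $h_1,h_2$ being nonvanishing there), carries these curves to the affine lines $Z=c_1W+c_2$, which form the Segre family of the flat hyperquadric. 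Thus the second-order ODE associated with $M$ is point-equivalent to $w''=0$, and by the CR--ODE correspondence of Section~2.2 the hypersurface $M$ is spherical outside $X$.
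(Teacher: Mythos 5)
Your proposal is correct and follows essentially the same route as the paper's proof: reduce via Corollary~3.14 to $\psi_2^*=\bar\psi_2$, $\psi_3^*=\bar\psi_3$, compute $\psi_2^*,\psi_3^*$ from the dual-family relation (your formulas agree exactly with the paper's \eqref{dualfunction}-derived identities after solving for $\psi^*$), and then obtain \eqref{keyformulas} from \eqref{relation22}--\eqref{relation33}, with the same transversality-plus-linearization argument for Levi nondegeneracy and sphericity off $X$. No substantive differences.
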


\begin{proof} Let $\mathcal E^*$ be given as
$z''=\frac{P^*(w)}{w^m}z'+\frac{Q^*(w)}{w^{2m}}z$. As previously
observed, the conjugated  ODE $\overline{\mathcal E}$ has the form
$z''=\frac{\bar P(w)}{w^m}z'+\frac{\bar Q(w)}{w^{2m}}z$.
 Let $\mathcal S=\mathcal S^+_{\mathcal E}$
be given in a polydisc $\Delta_\delta\times\Delta_\varepsilon$ by
$w=\bar\eta
 e^{i\bar\eta^{m-1}\psi(z\bar\xi,\bar\eta)}$ with $\psi$ as in
 \eqref{phi} and $\mathcal S^*$ be given (in the same polydisc, for simplicity) by $w=\bar\eta
 e^{-i\bar\eta^{m-1}\psi^*(z\bar\xi,\bar\eta)}$ with $\psi^*$ as
 in \eqref{phi}. Then $\overline{\mathcal S}$ is given by $w=\bar\eta
 e^{-i\bar\eta^{m-1}\bar\psi(z\bar\xi,\bar\eta)}$. According to
 Corollary 3.14, $\mathcal E$ has a real structure if an only if
 $\bar P(w)=P^*(w)$ and $\bar Q(w)=Q^*(w)$. It follows from
 \eqref{relation22},\eqref{relation33} that the latter conditions are equivalent to
 \begin{equation}\label{upto3jet}
 \bar\psi_2=\psi^*_2,\ \ \bar\psi_3=\psi^*_3,
 \end{equation}
 so that one has to develop condition \eqref{upto3jet}. By
 the definition of the dual family, one has $$[\bar\eta=w
 e^{iw^{m-1}\psi(z\bar\xi,w)}]\,\Longleftrightarrow\,[w=\bar\eta
 e^{- i\bar\eta^{m-1}\psi^*(z\bar\xi,\bar\eta)}],$$ and, using the expansion \eqref{phi}, it is not
 difficult to obtain from here that
 \begin{gather} \label{dualfunction}
 z\bar\xi+\psi_2(w)z^2\bar\xi^2+\psi_3(w)z^3\bar\xi^3+O(z^4\bar\xi^4)=\left(z\bar\xi+\psi^*_2(\bar\eta)z^2\bar\xi^2+\psi^*_3(w)z^3\bar\xi^3+O(z^4\bar\xi^4)\right)\times\\
 \notag\times e^{i(m-1)w^{m-1}(
 z\bar\xi+\psi_2(w)z^2\bar\xi^2+O(z^3\bar\xi^3))}\left|_{\bar\eta=w+iw^mz\bar\xi+O(z^2\bar\xi^2)}\right.\end{gather}
 Gathering in \eqref{dualfunction} terms with $z^2\bar\xi^2$ and $z^3\bar\xi^3$ respectively, one
 gets
 $$\psi_2=\psi_2^*+i(m-1)w^{m-1},\,\psi_3=\psi_3^*+iw^m(\psi_2^*)'+i(m-1)w^{m-1}\psi_2-\frac{1}{2}(m-1)^2w^{2m-2}+i(m-1)w^{m-1}\psi_2^*.$$
 In view of the two latter identities, one can verify that
 \eqref{upto3jet} can be rewritten as
 \begin{equation}\label{keyformulas1}
 \psi_2(w)=\lambda(w)+i\frac{m-1}{2}w^{m-1},\,\psi_3(w)=\mu(w)+
 \frac{i}{2}w^m\lambda'(w)+i(m-1)w^{m-1}\lambda(w),
 \end{equation}
 where $\lambda(w),\mu(w)$ are two convergent in $\Delta_r$ power
 series with \it real \rm coefficients. Applying
 \eqref{relation22},\eqref{relation33} again, we conclude that
 \eqref{keyformulas1} is equivalent to \begin{equation}\label{keyformulas2}
 P(w)=2i\lambda(w)-mw^{m-1},\,Q(w)=6\mu(w)-8\lambda^2(w)+iw^{m}\lambda'(w)+2(m-1)^2w^{2m-2},\end{equation}
 which is already equivalent to \eqref{keyformulas} after setting
 \begin{equation}\label{ablambdamu}
a(w):=\lambda(w),\,b(w):=6\mu(w)-8\lambda^2(w)+2(m-1)^2w^{2m-2}.\end{equation}

 It remains to prove that if $\mathcal E$ has a real structure,
 then the associated nonminimal real hypersurface $M\subset\CC{2}$
 is Levi nondegenerate and spherical in $M\setminus X$, where $X$
 is the singular locus of the Segre family $\mathcal S$ (and, at the same time, the nonminimal locus of
 $M$). Fix a point $p\in M\setminus X$ and its small neighbourhood $V$. It follows from Proposition 3.12
 that if two  Segre varieties of $M$ intersect at a point $r\in V$, then their intersection is
 transverse. Accordingly, any Segre variety of $M$ near the point $p$ is determined by its
 1-jet at a given point uniquely. The latter fact implies (see, e.g.,
 \cite{DiPi},\cite{ber}) that $M$ is Levi nondegenerate at $p$.
 Finally, to prove that $M$ is spherical at $p=(z_0,w_0),w_0\neq 0,$ we argue as in the
 proof of Proposition 3.13: fix two linearly independent
 solutions $h_1(w),h_2(w)$ of $\mathcal E$ in $\Delta^*_\varepsilon$. Then each $Q_q$ with $q=(\xi,\eta)\notin X$
 is contained in the graph
\begin{equation}\label{graph}z\bar\xi=\tau_1(\bar\eta)h_1(w)+\tau_2(\bar\eta)h_2(w)\end{equation} for
some (possibly multiple-valued) analytic in $\Delta^*_\varepsilon$
functions $\tau_1(\bar\eta),\tau_2(\bar\eta)$. We then use
slightly modified arguments from \cite{arnoldgeom} to construct
the desired mapping into a sphere:
since the  Wronskian $d(w)=\begin{vmatrix} h_1(w) & h_2(w)\\
h'_1(w) & h'_2(w)\end{vmatrix}$ is non-zero in
$\Delta^*_\varepsilon$, we may suppose that either $h_1(w_0)\neq
0$ or $h_2(w_0)\neq 0$ (for some fixed analytic  elements of
$h_1,h_2$ in $V$). If, for example, $h_1(w_0)\neq 0$, consider in
 $V$  the mapping
\begin{equation}\label{maptosphere}
\Lambda:\,(z,w)\longrightarrow\left(\frac{z}{h_1(w)},\frac{h_2(w)}{h_1(w)}\right).
\end{equation}
As the Wronskian $d(w)$ is non-zero in $V$, we may assume that
$\Lambda$ is biholomorphic there. By the definition of $\Lambda$,
the graphs \eqref{graph} are the preimages of complex lines under
the map $\Lambda$, so that $\Lambda$ maps Segre varieties of $M$
into complex lines. It is not difficult to verify from here that
$\Lambda(M)$ is contained in a quadric $\mathcal Q\subset\CP{2}$
(see, for example, the proof of Theorem 6.1 in \cite{nonminimal}),
which implies sphericity of $M$ at $p$. The theorem is completely
proved now. \end{proof}

\begin{rema} It is possible to give also the
characterization of the ODEs with a negative real structure: these
are obtained by conjugating ODEs with a positive real structure.
\end{rema}

\begin{rema}
It follows from \eqref{keyformulas} that a complex linear ODE with
an isolated meromorphic singularity at the origin is
$m$-admissible with a positive real structure for at most one
value $m\in\mathbb{Z}_+$.
\end{rema}

\begin{rema} Theorem 3.15, combined with the proof of Proposition 3.5, gives an effective algorithm for
obtaining nonminimal at the origin real hypersurfaces
$M\subset\CC{2}$ with prescribed nonminimality order $m\geq 1$,
Levi nondegenerate and spherical outside the nonminimal locus
$X\subset M$, and invariant under the group $z\longrightarrow
e^{it}z$ of rotational symmetries. Moreover, one can prescribe
essentially arbitrary 6-jet to the hypersurface $M$. For reader's
convenience we summarize this algorithm below.

\medskip

\begin{center} \bf Algorithm for obtaining nonminimal spherical real
hypersurfaces \end{center}

\medskip

1. Take arbitrary convergent in some disc centred at the origin
power series $a(w),b(w)$ with real coefficients, and compute two
functions $P(w),Q(w)$ by the formulas \eqref{keyformulas}. This
gives an $m$-admissible ODE \eqref{admissibleODE}.

\smallskip

2. Solve the holomorphic ODE \eqref{findphi2} with a holomorphic
parameter $\bar\eta$ and the initial data $y(0)=0,\,y'(0)=1$. This
gives a holomorphic near the origin in $\CC{2}$ function
$\psi(t,\bar\eta)$.

\smallskip

3. Then the equation $w=\bar w e^{i\bar w^{m-1}\psi(z\bar z,\bar
w)}$ determines an invariant under the group of rotational
symmetries nonminimal at the origin real hypersurface
$M\subset\CC{2}$ of nonminimality order $m$, Levi nondegenerate
and spherical outside the nonminimal locus $X=\{w=0\}$. The 6-jet
of $M$ is determined by finding $\lambda(w),\mu(w)$ using
\eqref{ablambdamu} and then $\psi_2,\psi_3$ by formulas
\eqref{keyformulas1}.
\end{rema}

\begin{rema} Theorem 3.15 and the algorithm above provide in fact
a \it complete \rm description of nonminimal at the origin
real-analytic Levi nonflat hypersurfaces $M\subset\CC{2}$, Levi
nondegenerate and spherical outside the complex locus, such that
$iz\dz\in\mathfrak{aut}\,(M,0)$. In order to prove that one needs
to associate to each $M$ as above a second order $m$-admissible
ODE. The fact that every nonminimal spherical $M$ admits an ODE
associated with it is proved in our upcoming paper \cite{KS-new}.
\end{rema}

\section{Formally but not holomorphically equivalent real
hypersurfaces}

In this section we will use the explicit description of linear
meromorphic ODEs with a real structure given by Theorem~3.15 to
construct for each fixed nonminimality order $m\geq 2$ a family of
pairwise formally equivalent $m$-nonminimal at the origin real
hypersurfaces, Levi nondegenerate and spherical outside the
nonminimal locus, which are, however, generically pairwise
holomorphically inequivalent at the origin. The construction is
based on existence of families of linear ODEs with a meromorphic
singularity at the origin and a positive real structure, with the
property that the ODEs in the family are pairwise formally but not
holomorphically equivalent.

The desired ODEs and the associated real hypersurfaces are
introduced as follows. Fix an integer $m\geq 2$ and put
$a(w)\equiv 1$ and $b(w)=\beta w^{2m-2}$, where $\beta\in\RR{}$ is
a real constant. Applying now formulas \eqref{keyformulas}, we
obtain the following one-parameter family $\mathcal E^m_\beta$ of
complex linear ODEs with a meromorphic singularity at the origin,
which are $m$-admissible and have a positive real structure:

\begin{equation}\label{mainODE}
z''=\left(\frac{2i}{w^m}-\frac{m}{w}\right)z'+\frac{\beta}{w^2}z.
\end{equation}
As $m\geq 2$, each $\mathcal E^m_\beta$ has a \it non-Fuchsian \rm
singularity at the origin, which plays a crucial role in our
construction. We denote by $M^m_\beta$ the $m$-nonminimal at the
origin real hypersurfaces, associated with $\mathcal E^m_\beta$.
Each $M^m_\beta$ is Levi nondegenerate and spherical outside the
complex locus $X=\{w=0\}$.

Introducing a new dependent variable $u:=z'w$, one can rewrite
\eqref{mainODE} as a first order system
\begin{equation} \label{mainsystem} \begin{pmatrix} z \\ u
\end{pmatrix}'=\left[\frac{1}{w^m}\begin{pmatrix} 0 & 0 \\ 0 & 2i
\end{pmatrix}+\frac{1}{w}\begin{pmatrix} 0 & 1 \\ \beta & 1-m
\end{pmatrix}\right]\begin{pmatrix} z \\ u
\end{pmatrix}\end{equation} with a non-Fuchsian singularity at the
origin.

\begin{dfn} A \it (formal) gauge transformation \rm is a
(formally) invertible local transformation
$(\CC{2},0)\longrightarrow(\CC{2},0)$ of the form
\begin{equation}\label{e.gauge}
(z,w) \longrightarrow \left( zf(w),\, g(w) \right) ,
\end{equation}
where $f(w)$ and $g(w)$ are two (formal) power series with
$f(0)\neq 0,\,g(0)=0,\,g'(0)\neq 0$. A \it (formal) special gauge
transformation \rm is a (formally) invertible local transformation
of the form~\eqref{e.gauge}, where $f(w)$ and $g(w)$ are (formal)
power series that satisfy an additional normalization
$f(0)=1,\,g(w)=w+O(w^{m+1})$.
\end{dfn}

Clearly, the set of (formal) gauge transformations, as well as the
set of (formal) special gauge transformations, form a group. We
also note that for a formal gauge transformation the formal
recalculation of derivatives is well-defined (see Section 2), so
that one can correctly define, in the natural way,  formal
equivalence of $m$-admissible linear ODEs by means of gauge
transformations.

\begin{propos} For any $m\geq 2$ and $\beta\in\RR{}$ the ODE $\mathcal
E^m_\beta$ is formally equivalent to the ODE $\mathcal E^m_0$ by
means of a formal special gauge transformation.\end{propos}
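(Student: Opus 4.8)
The plan is to produce the special gauge transformation \eqref{e.gauge} by solving, order by order, the equations expressing that $(z,w)\mapsto(zf(w),g(w))$ carries graphs of solutions of $\mathcal E^m_\beta$ onto graphs of solutions of $\mathcal E^m_0$. Taking a solution $z(w)$ of $\mathcal E^m_\beta$, setting $Z=zf$, $W=g$, and recomputing $Z_{WW}$ in terms of $w$ by the chain rule (the recalculation of derivatives described in Section~2.3), one finds that $Z(W)$ satisfies $Z''=P(W)Z'+Q(W)Z$, with $P,Q$ read off by matching the independent quantities $z$ and $z'$. Abbreviating $\phi:=f'/f$, $p(w)=\tfrac{2i}{w^m}-\tfrac{m}{w}$, $q(w)=\tfrac{\beta}{w^2}$, and writing $p(g)$ for $p(g(w))$, the requirement that the target be exactly $\mathcal E^m_0$, i.e. $P(\cdot)=p(\cdot)$ and $Q\equiv0$, becomes the pair of relations
\[
p(g)=\frac{p}{g'}+\frac{2\phi}{g'}-\frac{g''}{(g')^2},\qquad q+\frac{f''}{f}=p\,\phi+2\phi^2 .
\]
Using $f''/f=\phi'+\phi^2$ and substituting the first relation into the second, the zero-order condition collapses to the Riccati equation $\phi'=\phi^2+p\,\phi-q$.

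I would first record that the normalization $g(w)=w+O(w^{m+1})$ is exactly what makes the first relation consistent: since $g=w(1+O(w^m))$, the only negative powers occurring in $p(g)$, $p/g'$, $g''/(g')^2$ are $2i\,w^{-m}$ and $-m\,w^{-1}$, and they cancel automatically, so that the relation is an identity between honest formal power series. Solving it for $\phi$ gives $\phi=\tfrac12\big(g'p(g)-p\big)+\tfrac{g''}{2g'}$, and a direct expansion shows
\[
\phi_k=i(m+1+k)\,g_{m+1+k}+(\text{polynomial in }g_{m+1},\dots,g_{m+k}).
\]
Because the diagonal coefficients $i(m+1+k)$ never vanish, the correspondence $g\leftrightarrow\phi$ is a formal bijection between changes of variable $g=w+O(w^{m+1})$ and arbitrary formal series $\phi=\sum_{k\ge0}\phi_k w^k$; hence it suffices to produce the series $\phi$, after which $g$ is recovered uniquely and $f=\exp\!\int\phi\,dw$ is fixed by $f(0)=1$.

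The heart of the argument is then solving $\phi'=\phi^2+p\,\phi-q$ in formal power series. Since $\phi'-\phi^2$ has no negative powers, the principal part of $p\,\phi-q=2i\,w^{-m}\phi-m\,w^{-1}\phi-\beta w^{-2}$ must vanish; the coefficients of $w^{-m},\dots,w^{-1}$ give $m$ linear equations determining $\phi_0,\dots,\phi_{m-1}$ uniquely (the constant $\beta$ entering only through the $w^{-2}$ equation). Comparing coefficients of $w^{l}$, $l\ge0$, yields the recursion
\[
\phi_{l+m}=\frac{1}{2i}\Big[(m+1+l)\,\phi_{l+1}-\sum_{a+b=l}\phi_a\phi_b\Big],\qquad l\ge0,
\]
which, because $m\ge2$, expresses $\phi_{l+m}$ through coefficients of strictly smaller index and so determines $\phi_m,\phi_{m+1},\dots$ successively. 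Assembling $\phi$, then $g$, then $f$ produces the required special gauge transformation.

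The main obstacle is precisely the solvability of this recursion, and it is here that the non-Fuchsian nature of $\mathcal E^m_\beta$ stressed after \eqref{mainODE} enters: the leading exponent is $m\ge2$, so the index shift $l\mapsto l+m$ is strictly positive and the recursion is triangular, i.e. non-resonant. For a Fuchsian singularity ($m=1$) the shift would be trivial and resonance obstructions would in general appear, consistent with the fact recalled in Section~2.5 that formal and holomorphic equivalence coincide in the Fuchsian case. I would therefore organize the write-up so that the use of $m\ge2$ is isolated in the single step guaranteeing that the denominators $i(m+1+k)$ and the leading coefficient $2i$ of the Riccati recursion are nonzero.
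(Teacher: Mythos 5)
Your argument is correct, but it proceeds along a genuinely different route from the paper's. The paper does not solve for the gauge transformation directly: it first passes to the first-order system \eqref{mainsystem}, checks that the principal part $A_0=\mathrm{diag}(0,2i)$ is nonresonant, runs the Poincar\'e--Dulac formal normalization to produce a formal fundamental system of solutions $\{f_\beta(w),\,g_\beta(w)w^{1-m}\exp(\tfrac{2i}{1-m}w^{1-m})\}$ of $\mathcal E^m_\beta$, and then writes the transformation \eqref{equivalence} explicitly in terms of these formal solutions via the classical fact that $z\mapsto z/z_1$, $w\mapsto z_2/z_1$ flattens a second-order linear ODE to $z''=0$. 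You instead write down the exact conditions on $(f,g)$ for the recalculated ODE to coincide with $\mathcal E^m_0$, reduce them to a Riccati equation for $\phi=f'/f$ coupled with an invertible correspondence $g\leftrightarrow\phi$, and solve the Riccati equation by a triangular power-series recursion. Your route is more elementary and self-contained (no normal-form theory, no formal fundamental systems), and it delivers as a free byproduct the \emph{uniqueness} of the special gauge equivalence, which the paper has to prove separately as Proposition~4.4 by a closely related computation. What it does not deliver is the closed formula \eqref{equivalence} in terms of the formal solutions $f_\beta, g_\beta$; that formula is not needed for Proposition~4.2 itself but is used essentially later (the divergence statements in Theorem~5.1 are extracted from it via the Liouville--Ostrogradsky/Wronskian argument), so the paper's heavier machinery is earning its keep downstream. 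Two small corrections to your write-up: the diagonal coefficient in the expansion of $\phi_k$ in terms of $g_{m+1+k}$ is $i(k+1)$, not $i(m+1+k)$ (a direct check with $g=w+\epsilon w^{m+1+k}$ gives the contribution $\tfrac12\cdot 2i(k+1)\epsilon w^{k}$ from $\tfrac{2ig'}{g^m}-\tfrac{2i}{w^m}$, while the terms $-\tfrac{m}{2}(\ln\tfrac{g}{w})'$ and $\tfrac{g''}{2g'}$ first contribute at order $w^{m+k-1}>w^k$); this does not affect the argument, since all that is used is the nonvanishing of the diagonal entries. Also, your closing remark that resonance obstructions would ``in general appear'' for $m=1$ is not quite accurate for this family --- there the recursion would read $(l+2-2i)\phi_{l+1}=\sum_{a+b=l}\phi_a\phi_b$, whose coefficients never vanish --- but since the proposition assumes $m\ge2$ this is only a side comment and carries no weight in the proof.
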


\begin{proof} The strategy of the proof is based on finding the fundamental system of formal solutions
of an ODE $\mathcal E^m_\beta$ (we refer to
\cite{ilyashenko},\cite{ai},\cite{vazow},\cite{coddington} for
more information on the concepts of a formal normal form and a
fundamental system of formal solutions). It is straightforward to
verify that the function $\exp\left(\frac{2i}{1-m}w^{1-m}\right)$
is a solution of the ODE $\mathcal E^m_0$, so that the fundamental
system of solutions for $\mathcal E^m_0$ is
$\left\{1,\exp\left(\frac{2i}{1-m}w^{1-m}\right)\right\}$. For the
system $\mathcal E^m_\beta$ with $\beta\neq 0$ we consider the
corresponding system \eqref{mainsystem} and note that the
principal
matrix $A_0=\begin{pmatrix} 0 & 0 \\
0 & 2i\end{pmatrix}$ is diagonal and its eigenvalues are distinct,
hence the system is nonresonant. We first perform a transformation
$\begin{pmatrix} z \\ u
\end{pmatrix}\lr (I+w^{m-1}H)\begin{pmatrix} z \\ u
\end{pmatrix}$, where $I$ is the identity and $H$ is a constant $2\times 2$ matrices,
and obtain the system $\begin{pmatrix} z \\ u
\end{pmatrix}'=\frac{1}{w^m}A(w)\begin{pmatrix} z \\ u
\end{pmatrix}$, where $A(w)$ is a holomorphic matrix-valued
function of the form $A_0+A_{m-1}w^{m-1}+O(w^m)$. Here $A_0$ is
the same as for the initial system, and
$$
A_{m-1} =
\begin{pmatrix} 0 & 1 \\ \beta & 1-m
\end{pmatrix}+A_0H-HA_0 .
$$
By choosing $H=\frac{1}{2i}\begin{pmatrix} 0 & 1 \\ -\beta & 0
\end{pmatrix}$ we may eliminate the nondiagonal elements, and so
$A_{m-1} =\begin{pmatrix} 0 & 0 \\ 0 & 1-m \end{pmatrix}$. We now
follow the Poincare-Dulac formal normalization procedure for
non-Fuchsian systems (see, e.g., \cite{ilyashenko}, Thm 20.7), and
using the fact that the system is nonresonant, bring it to a
polynomial diagonal normal form with the $(m-1)$-jet being equal
to $A_0+w^{m-1}A_{m-1}$. As all terms of the form $O(w^m)$ can be
removed in the nonresonant case, the formal normal form of system
\eqref{mainsystem} becomes
\begin{equation}\label{normalform}
\begin{pmatrix} z \\ u
\end{pmatrix}'=\left[\frac{1}{w^m}\begin{pmatrix} 0 & 0 \\ 0 & 2i
\end{pmatrix}+\frac{1}{w}\begin{pmatrix} 0 & 0 \\ 0 & 1-m
\end{pmatrix}\right]\begin{pmatrix} z \\ u
\end{pmatrix}.\end{equation}
This implies that systems \eqref{mainsystem} for different $\beta$
are formally gauge equivalent, however, our goal is to deduce the
equivalence of the ODEs \eqref{mainODE}, which is a \it different
\rm issue. The normal form \eqref{normalform} admits the
fundamental matrix of solutions
$$e^{\frac{1}{1-m}w^{1-m}\begin{pmatrix} 0 & 0 \\ 0 & 2i
\end{pmatrix}}\cdot w^{\begin{pmatrix} 0 & 0 \\ 0 & 1-m
\end{pmatrix}}.$$ We
conclude from here that the fundamental system of formal solutions
for \eqref{mainsystem} is of the form
\begin{equation} \label{fundamsystem} \hat F_\beta(w)\cdot
e^{\frac{1}{1-m}w^{1-m}\begin{pmatrix} 0 & 0 \\ 0 & 2i
\end{pmatrix}}\cdot w^{\begin{pmatrix} 0 & 0 \\ 0 & 1-m
\end{pmatrix}},\end{equation} where $\hat F_\beta(w)=\begin{pmatrix}f_\beta(w) & g_\beta(w)\\
h_\beta(w) & s_\beta(w)\end{pmatrix}$ is a matrix-valued formal
power series of the form $I+\sum\limits_{k\geq 2}F_kw^k$ ($I$
denotes the unit $2\times 2$ matrix). The latter means that the
columns of \eqref{fundamsystem} are formally linearly independent
and their formal substitution into \eqref{mainsystem} gives the
identity. Representation \eqref{fundamsystem} implies that
equation \eqref{mainODE} possesses a formal fundamental system of
solutions
$\left\{f_\beta(w),g_\beta(w)\cdot\exp\left(\frac{2i}{1-m}w^{1-m}\right)\cdot
w^{1-m}\right\}$ for two formal power series
\begin{equation}\label{formfg}f_\beta(0)=1+O(w),\,g_\beta(w)=w^{m-1}+O(w^m)\end{equation} (the expansion of
$g_\beta$ follows from the fact that, in view of
\eqref{mainsystem},
$$
\left(g_\beta(w)\exp\left(\frac{2i}{1-m}w^{1-m}\right)\right)'=
\frac{1}{w}s_\beta(w)\exp\left(\frac{2i}{1-m}w^{1-m}\right) ,
$$
and also $s_\beta(w)=1+O(w)$, so that $ord_0 g_\beta=m-1$, and
after a scaling we get $g_\beta(w)=w^{m-1}+O(w^m)$).

We set
$$
\chi(w):=\frac{1}{f_\beta(w)},\ \
\tau(w):=w\left(1+\frac{1-m}{2i}w^{m-1}\ln\frac{g_\beta(w)}{w^{m-1}f_\beta(w)}\right)^{\frac{1}{1-m}}.
$$
In view of \eqref{formfg}, $\tau(w)$ is a well defined formal
power series of the form $w+O(w^{m+1})$, and $\chi(w)$ is a well
defined formal power series of the form $1+O(w)$. We claim that
\begin{equation}\label{equivalence}
(z,w) \longrightarrow \left(\chi(w)z, \tau(w) \right)
\end{equation}
is the desired formal special gauge transformation sending
$\mathcal E^m_\beta$ into $\mathcal E^m_0$. This fact can be seen
either from a straightforward computation (one has to perform
substitution \eqref{equivalence} in $\mathcal E^m_\beta$ and use
the fact that
$\left\{f_\beta(w),g_\beta(w)\cdot\exp\left(\frac{2i}{1-m}w^{1-m}\right)\cdot
w^{1-m}\right\}$ is the fundamental system of solutions for
$\mathcal E^m_\beta$), or as follows. As it is shown in
\cite{arnoldgeom}, if two functions $z_1(w),z_2(w)$ are some
linearly independent holomorphic solutions of a second order
linear ODE $z''=p(w)z'+q(w)z$, then the transformation
$z\longrightarrow\frac{1}{z_1(w)}z,\,w\longrightarrow\frac{z_2(w)}{z_1(w)}$
transfers the initial ODE into the simplest ODE $z''=0$. The same
fact can be verified, by a simple computation, for more general
classes of functions, for example, for series of type
$h(w)\cdot\exp\left(a w^{\alpha}\right)$, where $h(w)$ is a formal
Laurent series with a finite principal part, and
$a,\alpha\in\CC{}$ are fixed constants. Then
\begin{equation}\label{straighten}
z\longrightarrow\frac{1}{f_\beta(w)}z,\,
w\longrightarrow\frac{g_\beta(w)}{w^{m-1}f_\beta(w)}\exp\left(\frac{2i}{1-m}w^{1-m}\right)
\end{equation}
transforms formally $\mathcal E^m_\beta$ into $z''=0$, and
\begin{equation}\label{straighten0}
z\longrightarrow z,\,
w\longrightarrow\exp\left(\frac{2i}{1-m}w^{1-m}\right)
\end{equation}
transforms $\mathcal E^m_0$ into $z''=0$. It follows then that the
formal substitution of \eqref{equivalence} into
\eqref{straighten0} gives \eqref{straighten}. Since the chain rule
agrees with the above formal substitutions, this shows that
\eqref{equivalence} transfers $\mathcal E^m_\beta$ into $\mathcal
E^m_0$. This proves the proposition.
\end{proof}

On the other hand, the ODEs $\mathcal E_\beta$ and $\mathcal
E^m_0$ are holomorphically inequivalent for a generic $\beta$, as
the following proposition shows.

\begin{propos}
For any $m\geq 2$ and $\beta\neq l(l-m+1),\,l\in\mathbb{Z}$, the
ODE $\mathcal E^m_\beta$ has a nontrivial monodromy, while the ODE
$\mathcal E^m_0$ has a trivial one.
\end{propos}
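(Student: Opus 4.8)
The plan is to reduce each $\mathcal E^m_\beta$ to a Bessel-type equation by an exact change of variables and then to read the monodromy off the local exponents at a regular singular point. For $\mathcal E^m_0$ the monodromy is trivial: as already observed, $\left\{1,\exp\!\left(\frac{2i}{1-m}w^{1-m}\right)\right\}$ is a fundamental system, and since $m$ is an integer both functions are single-valued on $\Delta^*_\varepsilon$.

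For $\beta\neq 0$ I would first introduce the variable $\xi=\frac{1}{1-m}w^{1-m}$, which is single-valued and meromorphic in $w$. In this variable $\mathcal E^m_\beta$ becomes $z_{\xi\xi}-2iz_\xi-\frac{\beta}{(m-1)^2\xi^2}z=0$ (the coefficient $-m/w$ disappears in the process), and the substitution $z=e^{i\xi}y$ removes the first-order term, yielding
\[
y_{\xi\xi}+\Bigl(1-\frac{\nu^2-\tfrac14}{\xi^2}\Bigr)y=0,\qquad \nu^2=\frac14+\frac{\beta}{(m-1)^2}.
\]
This is the Riccati--Bessel equation of order $\nu$ (solved by $\sqrt{\xi}\,J_{\pm\nu}(\xi)$); its only singularities are a regular one at $\xi=0$, with exponents $\tfrac12\pm\nu$, and an irregular one at $\xi=\infty$.

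Next I would transport the monodromy back to the $w$-plane. Since $\xi$ and $e^{i\xi}$ are single-valued in $w$, all the multivaluedness of $z=e^{i\xi}y(\xi)$ is carried by $y$, and one positive loop of $w$ about the origin corresponds, through $\xi=\frac{1}{1-m}w^{1-m}$, to $(m-1)$ loops of $\xi$ about $0$. Hence the monodromy operator of $\mathcal E^m_\beta$ is $M_0^{\mp(m-1)}$, where $M_0$ is the monodromy of the Bessel equation about $\xi=0$. As $\xi=0$ is a regular singular point with exponents $\tfrac12\pm\nu$, the eigenvalues of $M_0$ are $-e^{\pm 2\pi i\nu}$ (this depends only on the indicial roots, logarithms affecting merely the Jordan type), so the eigenvalues of the monodromy of $\mathcal E^m_\beta$ are $(-1)^{m-1}e^{\mp 2\pi i(m-1)\nu}$, whose product is $1$, in agreement with the single-valued Wronskian $w^{-m}\exp\!\left(\frac{2i}{1-m}w^{1-m}\right)$.

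The monodromy can equal the identity only if both eigenvalues are $1$, i.e. $(m-1)\nu\in\tfrac{m-1}{2}+\mathbb{Z}$, equivalently $\nu=\tfrac12+\tfrac{k}{m-1}$ for some $k\in\mathbb{Z}$; substituting into $\nu^2=\tfrac14+\beta/(m-1)^2$ gives precisely $\beta=k(k+m-1)$, which upon setting $l=-k$ is the excluded family $\beta=l(l-m+1)$. Therefore for $\beta\neq l(l-m+1)$ at least one eigenvalue is $\neq 1$ and the monodromy is nontrivial, while $\beta=0$ gives $\nu=\tfrac12$, $\sqrt\xi\,J_{\pm1/2}\propto\sin\xi,\cos\xi$, and the trivial monodromy already recorded. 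The step I expect to demand the most care is this monodromy transfer: one must verify that the change of variables is convergent so that the analytic monodromy is transported exactly (the Stokes phenomenon of the irregular point $w=0$ being entirely encoded in the behaviour of the Bessel solutions at $\xi=\infty$), and that the eigenvalues of $M_0$ may be read from the indicial exponents even at the resonant values $\nu\in\mathbb{Z}$ --- which is exactly why the argument is framed through eigenvalues rather than through the full matrix $M_0$.
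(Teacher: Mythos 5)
Your argument is correct, and it reaches the same numerical conclusion as the paper by a computationally different route. The paper rewrites $\mathcal E^m_\beta$ as the first order system \eqref{mainsystem}, passes to $t=1/w$, observes that $w=\infty$ is a \emph{Fuchsian} singularity with residue matrix $R_\beta=\left(\begin{smallmatrix}0 & -1\\ -\beta & m-1\end{smallmatrix}\right)$, and reads the monodromy eigenvalues as $e^{2\pi i\lambda_j}$ from the eigenvalues $\lambda_1+\lambda_2=m-1$, $\lambda_1\lambda_2=-\beta$ of $R_\beta$; since $0$ and $\infty$ are the only singularities on $\overline{\mathbb{C}}$, the monodromy at $w=0$ is the inverse of that at $\infty$, and nontriviality follows when some $\lambda_j\notin\mathbb{Z}$, i.e.\ exactly when $\beta\neq l(l-m+1)$. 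You use the same two underlying facts (the two-singularity trick on the sphere, and the fact that at a Fuchsian point the monodromy eigenvalues are determined by the indicial exponents regardless of logarithms), but you compute the exponents by an explicit scalar reduction: the substitution $\xi=\frac{1}{1-m}w^{1-m}$, $z=e^{i\xi}y$ turns $\mathcal E^m_\beta$ into the Riccati--Bessel equation with exponents $\frac12\pm\nu$, $\nu^2=\frac14+\beta/(m-1)^2$, at $\xi=0$. Your bookkeeping of the $(m-1)$-fold covering is consistent with the paper: $(m-1)\bigl(\frac12\pm\nu\bigr)=\frac{m-1}{2}\pm(m-1)\nu=\lambda_{1,2}$, so the eigenvalues $(-1)^{m-1}e^{\mp2\pi i(m-1)\nu}$ you obtain coincide with the paper's $e^{2\pi i\lambda_j}$, and your reduction of ``both eigenvalues equal $1$'' to $\beta=k(k+m-1)=l(l-m+1)$ with $l=-k$ matches the excluded set. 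Your worry about ``convergence of the change of variables'' is unnecessary here: $\xi(w)$ and $e^{i\xi}$ are single-valued elementary functions on $\mathbb{C}\setminus\{0\}$, so the monodromy is transported exactly, with the only care needed being the winding number $\mp(m-1)$, which you handle (and which does not affect triviality). A side benefit of your route is that it makes explicit the Bessel-function description of the solutions that the paper only alludes to in Remark 5.2.
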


\begin{proof} For the ODE $\mathcal
E^m_0$ the fundamental system of holomorphic solutions is given in
$\CC{}\setminus\{0\}$ by
$\left\{1,\exp\left(\frac{2i}{1-m}w^{1-m}\right)\right\}$, so that
all solutions of $\mathcal E^m_0$ are single-valued in
$\CC{}\setminus\{0\}$, accordingly, its monodromy is trivial. One
needs now to obtain the monodromy matrix for a generic system
\eqref{mainsystem}. In order to do that we consider $\infty$ as an
isolated singular point for \eqref{mainsystem} and perform the
change of variables $t:=\frac{1}{w}$. We obtain the system
\begin{equation} \label{systematinfinity}
\begin{pmatrix} y \\ u
\end{pmatrix}'=\left[t^{m-2}\begin{pmatrix} 0 & 0 \\ 0 & -2i
\end{pmatrix}+\frac{1}{t}\begin{pmatrix} 0 & -1 \\ -\beta & m-1
\end{pmatrix}\right]\begin{pmatrix} y \\ u
\end{pmatrix}\end{equation} with an isolated \it Fuchsian \rm singularity
at $t=0$. As \eqref{mainsystem} does not have any more singular
points in $\overline{\CC{}}$ beside $w=0$ and $w=\infty$, it is
sufficient to prove nontriviality of the monodromy matrices at
$t=0$ for systems \eqref{systematinfinity} with $\beta\neq
l(l-m+1),\,l\in\mathbb{Z}$. For the residue matrix
$R_\beta=\begin{pmatrix} 0 & -1 \\ -\beta & m-1
\end{pmatrix}$ of
\eqref{systematinfinity} at $t=0$ denote by $\lambda_1,\lambda_2$
its eigenvalues. The Poincare-Dulac procedure for Fuchsian systems
implies (see, e.g., Corollary 16.20 in \cite{ilyashenko}) that the
collection of eigenvalues of the monodromy operator for
\eqref{systematinfinity} looks as $\{e^{2\pi i\lambda_1},e^{2\pi
i\lambda_2}\}$. In particular, if one of the eigenvalues is not an
integer, the system \eqref{mainsystem} (and the corresponding ODE
$\mathcal E^m_\beta$) has a nontrivial monodromy. Applying the
relations $\lambda_1+\lambda_2=m-1,\,\lambda_1\lambda_2=-\beta$,
we obtain the claim of the proposition.
\end{proof}

Next we need to establish a connection between equivalences of the
$m$-admissible ODEs $\mathcal E^m_\beta$ and the associated real
hypersurfaces. We start with

\begin{propos} The only formal special gauge transformation preserving
the ODE $\mathcal E^m_0$ is the identity. In particular, the only
formal special gauge transformation, transferring $\mathcal
E^m_\beta$ into $\mathcal E^m_0$, is given by \eqref{equivalence}.
\end{propos}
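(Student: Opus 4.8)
The plan is to exploit the explicit fundamental system of $\mathcal E^m_0$ computed in the proof of Proposition 4.2, namely $\{1,\,E(w)\}$ with $E(w)=\exp\!\left(\frac{2i}{1-m}w^{1-m}\right)$, together with the fact that a gauge transformation preserving $\mathcal E^m_0$ must send graphs of its (formal) solutions to graphs of its solutions. Writing a special gauge transformation as $(z,w)\mapsto(f(w)z,\,g(w))$ with $f(0)=1$ and $g(w)=w+O(w^{m+1})$, and setting $h=g^{-1}$ (so that $h(W)=W+O(W^{m+1})$), a solution $z=\phi(w)$ is carried to $Z=\Phi(W)$ with $\Phi(W)=f(h(W))\,\phi(h(W))$. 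Since the solution space of $\mathcal E^m_0$ is the two-dimensional span of $1$ and $E$, applying this to $\phi\equiv 1$ and $\phi=E$ yields the pair of functional identities
\begin{gather*}
f(h(W))=a_{11}+a_{12}E(W),\\
f(h(W))\,E(h(W))=a_{21}+a_{22}E(W)
\end{gather*}
for suitable constants $a_{ij}$.

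First I would treat the first identity. Its left-hand side $f\circ h$ is a genuine formal power series carrying no exponential factor, whereas $E(W)$ has the nontrivial determining exponential $\frac{2i}{1-m}W^{1-m}$. Invoking the uniqueness of the decomposition of a formal solution into exponential-times-series blocks, i.e. the formal linear independence of terms with distinct determining exponentials (see \cite{ilyashenko},\cite{vazow}), a comparison of exponential types forces $a_{12}=0$, whence $f\circ h$ is the constant $a_{11}=f(0)=1$ and therefore $f\equiv 1$. Substituting this into the second identity and writing $E(h(W))=E(W)\,e^{p(W)}$, where $p(W)=\frac{2i}{1-m}\bigl(h(W)^{1-m}-W^{1-m}\bigr)$ is, thanks to the normalization $g(w)=w+O(w^{m+1})$, a formal power series with $p(0)=0$, I would rewrite the identity as $E(W)\bigl(e^{p(W)}-a_{22}\bigr)=a_{21}$. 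The same comparison of exponential types now forces $a_{21}=0$ and $e^{p(W)}\equiv a_{22}$, hence $p\equiv 0$ and $a_{22}=1$. Then $h(W)^{1-m}=W^{1-m}$ gives $h=\mathrm{id}$, i.e. $g=\mathrm{id}$; together with $f\equiv 1$ this shows the transformation is the identity, which proves the first assertion.

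The second assertion then follows formally: since the special gauge transformations form a group, if two of them send $\mathcal E^m_\beta$ to $\mathcal E^m_0$, then one composed with the inverse of the other is a special gauge transformation preserving $\mathcal E^m_0$, hence equals the identity by the first part; thus the transferring transformation is unique, and by Proposition 4.1 it must coincide with \eqref{equivalence}. The step I expect to be the main obstacle is the rigorous justification of the ``matching of exponential types'' in the purely formal category: one must argue that a formal power series and the formal object $E(W)$, together with their products, are formally linearly independent and that the determining exponential is an invariant. This is precisely where the non-Fuchsian (irregular) nature of the singularity and the structure theory of formal solutions of meromorphic linear systems enter, and where the normalization $f(0)=1,\ g(w)=w+O(w^{m+1})$ is indispensable: without it one would only conclude $f\equiv\mathrm{const}$ and $h(W)=\kappa W+\cdots$ with $\kappa^{m-1}=1$, leaving a finite group of scaling symmetries rather than the identity alone.
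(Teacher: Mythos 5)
Your argument is correct in substance but takes a genuinely different route from the paper's. The paper never touches the solution space: it writes out the transformation law for the coefficients of $\mathcal E^m_0$ under $(z,w)\mapsto(zf(w),g(w))$, obtaining two differential relations for $f$ and $g$, and kills each in turn --- if $f\not\equiv 1$ it shows $1/f$ would be a nonconstant formal power series solution of $\mathcal E^m_0$ (impossible), and if $f\equiv 1$ it reduces the relation for $g$ to the first-order equation $u'=w^{-m}(e^{-2iu}-1)$ for $u=\frac{1}{1-m}(g^{1-m}-w^{1-m})$, which has no nonzero formal power series solution because the logarithmic derivative $u'/u$ would need a pole of order $m\geq 2$; the normalization $g=w+O(w^{m+1})$ then eliminates the residual family $g=w(1+Cw^{m-1})^{-1/(m-1)}$. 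Your proof instead lets the transformation act on the formal fundamental system $\{1,E\}$ and invokes the transcendence of $E$ over formal Laurent series. The two are morally parallel: your ``no exponential inside a power series'' step is exactly the paper's ``only power series solutions of $\mathcal E^m_0$ are constants,'' and your $p(W)$ is the paper's $u$ up to inversion and a factor of $2i$ --- but you trade the paper's elementary (if fiddly) manipulation of nonlinear ODEs for $f$ and $g$ against the structure theory of formal solutions at an irregular singularity. The price is exactly the point you flag: you must (i) justify that a \emph{formal} gauge equivalence permutes solutions inside the differential algebra $\mathbb{C}((w))[E]$, which follows from the purely algebraic chain-rule identity relating source and target ODEs, and (ii) show that the solution space of $\mathcal E^m_0$ in that algebra is precisely $\mathbb{C}\cdot 1\oplus\mathbb{C}\cdot E$, with $1$ and $E$ linearly independent over $\mathbb{C}((W))$; both are standard, and both ultimately rest on the same fact the paper exploits, namely that $2ik w^{-m}$ with $k\neq 0$ and $m\geq 2$ is not the logarithmic derivative of a formal Laurent series. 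With those two lemmas supplied your proof is complete; one small correction is that the existence half of the second assertion comes from Proposition~4.2 (not 4.1), and your closing observation about the residual finite group of scalings absent the normalization matches the constant $C$ and the scaling ambiguity that the paper also has to remove at the end.
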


\begin{proof} Let $F:\,z^*=zf(w),\,w^*=g(w),\,f=1+O(w),\,g=w+O(w^{m+1})$
be a formal special gauge self-transformation of $\mathcal E^m_0$.
It is not difficult to calculate that $F^{-1}$ transforms
$\mathcal E^m_0$ into a well-defined formal meromorphic second
order linear ODE
$$\frac{f}{(g')^2}z''+\left(2\frac{f'}{(g')^2}-\frac{fg''}{(g')^3}\right)z'+\left(\frac{f''}{(g')^2}-\frac{f'g''}{(g')^3}\right)z=\left(\frac{2i}{g^m}-\frac{m}{g}\right)\left(\frac{f}{g'}z'+
\frac{f'}{g'}z\right).$$ Comparing the above identity
with~\eqref{mainODE} with $\beta=0$ gives
\begin{gather}
\label{newODE1}\left(\frac{2i}{g^m}-\frac{m}{g}\right)\frac{f'}{g'}-\left(\frac{f''}{(g')^2}-\frac{f'g''}{(g')^3}\right)=0 ,\\
\label{newODE2}g'\left(\frac{2i}{g^m}-\frac{m}{g}\right)-2\frac{f'}{f}+\frac{g''}{g'}=\frac{2i}{w^m}-\frac{m}{w}
.\end{gather}

If  $f\not\equiv 1$, then \eqref{newODE1} gives
$\frac{f''}{f'}=g'\left(\frac{2i}{g^m}-\frac{m}{g}\right)+\frac{g''}{g'}$
and, comparing with \eqref{newODE2}, we obtain
$\frac{f''}{f'}=2\frac{f'}{f}+\frac{2i}{w^m}-\frac{m}{w}$. Making
in the latter for the formal power series $f$ the substitution
$h:=\frac{1}{f}$ (note that $h(w)$ is also a well-defined formal
power series with $h(w)=1+O(w)$) it is not difficult to obtain
that $h''=\left(\frac{2i}{w^m}-\frac{m}{w}\right)h'$, so that $h$
satisfies the initial ODE $\mathcal E^m_0$. But any (formal) power
series solution for $\mathcal E^m_0$ is constant (it can be seen,
for example, from the fact that the fundamental system of
solutions for $\mathcal E^m_0$ is
$\left\{1,\exp\left(\frac{2i}{1-m}w^{1-m}\right)\right\}$), which
contradicts $h\not\equiv 1,f\not\equiv 1$.

Suppose now that $f\equiv 1$. Then \eqref{newODE1} holds
trivially, and we examine \eqref{newODE2}. Assuming that
$g(w)\not\equiv w$,~\eqref{newODE2} can be rewritten as a \it
well-defined \rm differential relation
$$2i\left(\frac{1}{g^{m-1}(1-m)}\right)'-2i\left(\frac{1}{w^{m-1}(1-m)}\right)'+(\ln
g')'-m\left(\ln\frac{g}{w}\right)'=0,$$ which gives
$\frac{2i}{1-m}\left(\frac{1}{g^{m-1}}-\frac{1}{w^{m-1}}\right)+\ln
g'-m\ln\frac{g}{w}=C_1$ for some constant $C_1\in\CC{}$. It
follows then that the formal meromorphic Laurent series
$\frac{1}{1-m}\left(\frac{1}{g^{m-1}}-\frac{1}{w^{m-1}}\right)$ is
in fact a formal power series, and a straightforward computation
shows that the substitution
$\frac{1}{1-m}\left(\frac{1}{g^{m-1}}-\frac{1}{w^{m-1}}\right):=u$,
where $u(w)$ is a formal power series, transforms the latter
equation for $g$ into $2iu+\ln(w^mu'+1)=C_1$. Shifting $u$, we get
the equation $2iu+\ln(w^mu'+1)=0$, where $u(0)=0$. Hence we
finally  obtain the following meromorphic first order ODE for the
formal power series $u(w)$:
\begin{equation}\label{ODEforu}u'=\frac{1}{w^m}(e^{-2iu}-1).\end{equation} However,
\eqref{ODEforu} has no non-zero formal power series solutions. To
see that, we note that for $u\not\equiv 0,\,u(0)=0$
\eqref{ODEforu} can be represented as
$-\frac{1}{2i}u'\left(\frac{1}{u}+H(u)\right)=\frac{1}{w^m}$,
where $H(t)$ is a holomorphic at the origin function. Hence we get
that the logarithmic derivative $\frac{u'}{u}$ has the expansion
$-\frac{2i}{w^m}+...$, where the dotes denote a formal power
series in $w$. But this clearly cannot happen for a formal power
series $u(w)$. Hence $u\equiv 0$, and, returning to the unknown
function $g$, we get $\frac{1}{g^{m-1}}-\frac{1}{w^{m-1}}=C$ for
some constant $C\in\CC{}$, so that
$g(w)=\frac{w}{(1+Cw^{m-1})^\frac{1}{m-1}}$. Taking into account
$g(w)=w+O(w^{m+1})$, we conclude that $C=0$ and $g(w)=w$. This
proves the proposition.
\end{proof}

Let now $\mathcal S=\{w=\rho(z,\bar\xi,\bar\eta)\}$ be a (general)
Segre family in a polydisc
$\Delta_\delta\times\Delta_\varepsilon$. We consider the \it
complex \rm submanifold
\begin{equation}\label{foliated}
\mathcal M_{\mathcal
S}=\left\{(z,w,\xi,\eta)\in\Delta_\delta\times\Delta_\varepsilon\times\Delta_\delta\times\Delta_\varepsilon:
\,w=\rho(z,\xi,\eta)\right\}\subset\CC{4},
\end{equation}
 and call it \it the associated foliated submanifold \rm of the family $\mathcal
 S$. If $\mathcal S$ is associated with an $m$-admissible ODE
 $\mathcal E$, we call $\mathcal M_{\mathcal S}$ \it the associated foliated submanifold of $\mathcal E$. \rm
We call $\mathcal M_{\mathcal S}$ \it $m$-admissible, \rm if
$\mathcal S$ is $m$-admissible. If $S$ is the Segre family of a
real hypersurface $M\subset\CC{2}$, then the associated foliated
submanifold is simply the complexification of $M$. The concept of
the associated foliated submanifold is somewhat analogous to that
of the \it submanifold of solutions \rm of a nonsingular
completely integrable PDE system (see ,e.g.,
\cite{cartanODE},\cite{olver}, \cite{gm},\cite{merker}). Here we
consider the case of \it singular \rm differential equations and
formal mappings between them.

The foliated submanifold $\mathcal M_{\mathcal S}$ admits two
natural foliations. The first one is the initial
foliation~$\mathcal S$ with leaves $\{(z,w,\xi,\eta)\in\mathcal
M_{\mathcal S}:\,\xi=const,\,\eta=const\}$. The second one is the
family of dual Segre varieties with leaves
$\{(z,w,\xi,\eta)\in\mathcal M_{\mathcal S}:\,z=const,w=const\}$.
If now $\mathcal E_1, \mathcal E_2$ are two $m$-admissible ODEs,
then it is crucial for the study of (formal) biholomorphisms
between them to consider (formal) biholomorphisms between the
associated foliated submanifolds $\mathcal M_{\mathcal
S_1},\mathcal M_{\mathcal S_2}$, preserving the origin and both
foliations. Clearly, any such biholomorphism has the form
\begin{equation}\label{twin}
(z,w,\xi,\eta)\longrightarrow(F(z,w),G(\xi,\eta)) ,
\end{equation}
where $F(z,w),G(\xi,\eta)$ are (formal) biholomorphisms
$(\CC{2},0)\longrightarrow(\CC{2},0)$. In this case  we call  the
transformation $(F(z,w),G(\xi,\eta)):\,(\mathcal M_{\mathcal
S_1},0)\longrightarrow(\mathcal M_{\mathcal S_2},0)$ \it a
(formal) coupled transformation of $\mathcal M_{\mathcal S_1}$
into $\mathcal M_{\mathcal S_1}$. \rm

Using the notion of associated foliated submanifolds, one can push
the concept of a Segre family to the formal level. Namely, let
$\rho(z,\xi,\eta)$ be a formal power series without a constant
term and the linear part equal to $\eta$. We then call the formal
complex submanifold $\mathcal M=\{w=\rho(z,\xi,\eta)\}$ of
$\CC{4}$ \it a formal foliated submanifold. \rm A formal foliated
submanifold can be identified with its formal defining function
$\rho$. If, in addition, $\rho$ is as in \eqref{phi}, we call
$\mathcal M$ \it $m$-admissible. \rm If $\mathcal
M=\{w=\rho(z,\xi,\bar\eta)\}$ is a formal foliated submanifold
such that the defining function $\rho(z,\xi,\eta)$ contains $\eta$
as a factor (for example, all $m$-admissible formal foliated
submanifolds have this property), and $\mathcal E$ is an
$m$-admissible ODE, then the derivatives $\rho_z(z,\xi,\bar\eta)$
and $\rho_{zz}(z,\xi,\eta)$ are well-defined power series, and we
say that \it $\mathcal M$ is formally associated with the ODE
$\mathcal E$ \rm if the well-defined substitution of the power
series $\rho(z,\xi,\eta)$ into the inverse ODE to $\mathcal E$
gives the identity of the formal power series in $z,\xi,\eta$ on
both sides of the equation.

Let now $\mathcal E_1,\mathcal E_2$ be two $m$-admissible ODEs,
$\mathcal M_1$ be a foliated submanifold, associated with
$\mathcal E_1$, and $F(z,w):\,(\CC{2},0)\lr(\CC{2},0)$ be a formal
invertible mapping tangent to the identity map at the origin. Then
the formal recalculation of the derivatives
$z_w,z_{ww},w_z,w_{zz}$ is well-defined (see Section 2), and one
can correctly define the formal equivalence of $\mathcal
E_1,\mathcal E_2$ by means of $F$. In addition, consider a similar
formal transformation $G(\xi,\eta):\,(\CC{2},0)\lr(\CC{2},0)$ of
the space of parameters $\xi,\eta$. One can then correctly define
the image of the foliated submanifold $\mathcal M_1$ under the
formal direct product
$(F(z,w),G(\xi,\eta)):\,(\CC{4},0)\lr(\CC{4},0)$ and obtain a
unique formal foliated submanifold $\mathcal M$ (one needs to
substitute $(F^{-1},G^{-1})$ into $\mathcal M_1$ and apply the
implicit function theorem in the category of formal power series).
It is immediate then that for any formal invertible transformation
$F(z,w)$, transferring $\mathcal E_1$ into $\mathcal E_2$, and any
formal invertible transformation $G(\xi,\eta)$ in the space of
parameters, where both $F$ and $G$ are tangent to the identity at
zero, the image of $\mathcal M_1$ under the direct product
$(F(z,w),G(\xi,\eta))$ is a foliated submanifold $\mathcal M_2$,
associated with $\mathcal E_2$.

Consider then a (formal) special gauge transformation
$(z,w)\longrightarrow F(z,w)=(zf(w),g(w))$, transforming an
$m$-admissible ODE $\mathcal E_1$ into an $m$-admissible ODE
$\mathcal E_2$. Let $\mathcal S_1,\mathcal S_2$ be the associated
positive $m$-admissible Segre families and $\mathcal M_1,\mathcal
M_2$ the associated foliated submanifolds. We claim that there
exists a (formal) special gauge transformation
$(\xi,\eta)\longrightarrow G(\xi,\eta)=(\xi
\lambda(\eta),\mu(\eta))$, such that
$(z,w,\xi,\eta)\longrightarrow(F(z,w),G(\xi,\eta))$ is a (formal)
coupled  transformation of $\mathcal M_1$ into $\mathcal M_2$.
Indeed, let us first prove

\begin{lem} There exists a unique (formal) special gauge transformation
$$(\xi,\eta)\longrightarrow G(\xi,\eta)=(\xi \lambda(\eta),\mu(\eta))$$
such that the (formal) transformation
$(z,w,\xi,\eta)\longrightarrow(F(z,w),G(\xi,\eta))$ sends
$\mathcal M_1$ into an $m$-admissible (formal) foliated
submanifold $\mathcal M$.
\end{lem}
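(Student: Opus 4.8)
The plan is to exploit that $\mathcal M_1$, being $m$-admissible, has a defining function depending on $z,\xi$ only through the product $z\xi$, and to reduce $m$-admissibility of the image to matching only the first two coefficients of its expansion in the product variable, via the leading-expansion criterion stated right after Definition~3.2 (namely, a family depending on $z\xi$ and $\eta$ is $m$-admissible iff its defining function equals $\eta+i\eta^m z\xi+O(\eta^m z^2\xi^2)$). I write $\mathcal M_1=\{w=\rho_1(z\xi,\eta)\}$ with $\rho_1(s,\eta)=\eta+i\eta^m s+O(\eta^m s^2)$, $s=z\xi$, and denote the target coordinates by $(Z,W,\Xi,H)=(zf(w),g(w),\xi\lambda(\eta),\mu(\eta))$, with $\lambda,\mu$ to be determined.

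First I would observe that for \emph{any} gauge-type $G$ the image is a foliated submanifold whose defining function depends on $Z,\Xi$ only through $S:=Z\Xi$. Indeed, on $\mathcal M_1$ one has $w=\rho_1(s,\eta)$, so $f(w)$ is a function of $s,\eta$ and hence $S=Z\Xi=s\,f(w)\lambda(\eta)$ is a function of $s,\eta$ alone; since $H=\mu(\eta)$ recovers $\eta$, and $S=s\lambda(\eta)f(\eta)\bigl(1+O(s)\bigr)$ is invertible in $s$, the formal implicit function theorem expresses $W=g(\rho_1(s,\eta))$ as a series $\tilde\rho(S,H)$. Thus the product structure is automatic and only the coefficients need be arranged.

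Next I would compute the expansion of $\tilde\rho$ in $S$. Using $\rho_1-\eta=\eta^m s\,B(s,\eta)$ with $B(0,\eta)=i$, one gets $f(\rho_1)=f(\eta)+\eta^m s\,\tilde A$ and $g(\rho_1)=g(\eta)+g'(\eta)\eta^m s\,B+O(\eta^m s^2)$; inverting $S=s\lambda(\eta)f(\eta)+O(\eta^m s^2)$ and substituting yields
$$\tilde\rho(S,H)=g(\eta)+\frac{i\,g'(\eta)\,\eta^m}{\lambda(\eta)f(\eta)}\,S+O(\eta^m S^2),\qquad H=\mu(\eta).$$
Comparing with the required form $\tilde\rho=H+iH^mS+O(H^mS^2)$ forces $\mu(\eta)=g(\eta)$ from the constant term and $\lambda(\eta)=g'(\eta)\eta^m/\bigl(f(\eta)g(\eta)^m\bigr)$ from the linear term. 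Since $\eta^m$ and $H^m=g(\eta)^m$ differ by a formal unit, the remainder $O(\eta^m S^2)$ is automatically $O(H^mS^2)$, so the leading-expansion criterion gives $m$-admissibility of $\mathcal M$. Because $g$ is a special gauge transformation ($g=\eta+O(\eta^{m+1})$, $g'=1+O(\eta^m)$, $f(0)=1$), one checks $\mu=g=\eta+O(\eta^{m+1})$ and $\lambda(0)=1$, so $G$ is indeed a special gauge transformation; uniqueness is immediate, as both coefficients are forced.

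The main obstacle is to see that no conditions beyond the $0$-th and $1$-st order coefficients in $S$ need be imposed, i.e.\ that every higher-order term is automatically divisible by $H^m$. The plan handles this structurally: the factor $\eta^m$ originates in $\rho_1-\eta$ (a consequence of the $m$-admissibility of $\mathcal M_1$) and survives composition with $f$ and $g$, so it persists as a common factor in each term of order $\ge 2$ in $S$. This is precisely what makes the simple leading-expansion criterion applicable and spares one from reproducing the exponential normal form of an $m$-admissible family directly.
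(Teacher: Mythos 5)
Your proof is correct and follows essentially the same route as the paper's: the paper writes the transformed defining equation as $w=\sum_{j\ge0}\varphi_j(\eta)(z\xi)^j$, observes that $\varphi_j=O(\eta^m)$ for $j\ge1$ automatically, and imposes only $\varphi_0=\eta$, $\varphi_1=i\eta^m$, arriving at the same two forced relations $\mu=g$ and $\eta^m g'=\mu^m f\lambda$ that you obtain. The only (immaterial) difference is that you compute the forward image of $\mathcal M_1$ while the paper works with the pullback of the exponential defining equation of $\mathcal M_2$.
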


\begin{proof}
To simplify notations we will prove the same statement for the
special gauge mapping $F^{-1}$ of the ODE $\mathcal E_2$. Let
$\mathcal M_2$ be given by \eqref{foliated} with $\psi$ as in
\eqref{phi}. Our goal is to determine uniquely two (formal) power
series $\lambda(\eta),\mu(\eta)$ with
$\lambda(\eta)=1+O(\eta),\,\mu(\eta)=\eta+O(\eta^{m+1})$ such that
\begin{equation}\label{twingauge}
g(w)=\mu(\eta)e^{i\mu(\eta)^{m-1}\psi(zf(w),\xi\lambda(\eta))}\end{equation}
defines an $m$-admissible foliated submanifold. Note that
\eqref{twingauge} can be represented as
\begin{equation}\label{twingauge1}
g(w)=\mu(\eta)+i\bar\mu(\eta)^mz\xi
f(w)\lambda(\eta)+O(z^2\xi^2\eta^m),
\end{equation}
from which we conclude that \eqref{twingauge} defines a formal
foliated submanifold of the form $w=\sum\limits_{j\geq 0}
\varphi_j(\eta)(z\xi)^j$ with $\varphi_0(\eta)=O(\eta)$ and
$\varphi_j(\eta)=O(\eta^m)$ for $j\geq 1$. Hence we are interested
in the choice of $\lambda(\eta),\mu(\eta)$ which gives
$\varphi_0(\eta)=\eta,\,\varphi_1(\eta)=i\eta^m$. The latter is
equivalent to the fact that the substitution $w=\eta+i\eta^m
z\xi+O(z^2\xi^2\eta^m)$ (corresponding to the desired target
foliated submanifold $\mathcal M$) into \eqref{twingauge1} makes
\eqref{twingauge1} an identity modulo $z^2\bar\xi^2$. Thus we get
$g(\eta)+i\eta^m g'(\eta)z\xi=\mu(\eta)+i\mu(\eta)^mz\xi
f(\eta)\lambda(\eta)+O(z^2\xi^2)$, which is equivalent to
\begin{equation}\label{twingauge2}g(\eta)=\mu(\eta),\,\eta^m
g'(\eta)=\mu(\eta)^m f(\eta)\lambda(\eta).
\end{equation}
Equations \eqref{twingauge2} enable one to determine
$\lambda(\eta),\mu(\eta)$ with the desired properties uniquely,
and this proves the lemma.
\end{proof}

If now $G(\xi,\eta)$ is the special gauge transformation, provided
by Lemma~4.5, it follows from the above arguments that the
(formal) image of $\mathcal M_1$ under the direct product
$(F(z,w),G(\xi,\eta))$ is a (formal) \it $m$-admissible \rm
foliated submanifold $\mathcal M$, associated with $\mathcal E_2$.
However, it is not difficult, in the same manner as in the proof
of Proposition 3.5, to show that even in the formal category the
associated $m$-admissible foliated submanifold is unique (since
the uniqueness follows from the uniqueness of the solution of the
Cauchy problem for the holomorphic ODE \eqref{findphi2}, which
holds true in the formal category as well, see \cite{ck}). Thus we
conclude $\mathcal M=\mathcal M_{2}$, and this proves the
existence of the special gauge transformation $G$ in both
holomorphic and formal settings.

Conversely, let
$(z,w,\xi,\eta)\longrightarrow(F(z,w),G(\xi,\eta))$ be a (formal)
coupled transformation, sending $\mathcal M_{1}$ into $\mathcal
M_{2}$, where both $F$ and $G$ are special gauge. It is easy to
check, by a computation similar to those in Proposition~4.4, that
$F(z,w)$ transfers $\mathcal E_1$ into some (formal)
$m$-admissible ODE $\mathcal E$. On the other hand,
$(F(z,w),G(\xi,\eta))$ (formally) transfers $\mathcal M_{1}$ into
$\mathcal M_{2}$, so that $\mathcal M_{2}$ is (formally)
associated with $\mathcal E$. This shows that $\mathcal E=\mathcal
E_2$ in the case of a holomorphic coupled  transformation. To
treat the formal case we note that relations
\eqref{relation22},\eqref{relation33} similarly hold for formal
$m$-admissible families, associated with formal $m$-admissible
ODEs (the proof does not change), so that the conclusion $\mathcal
E=\mathcal E_2$ holds true in the formal case as well.

We summarize the above arguments in the following

\begin{propos}
Let $\mathcal E_1,\mathcal E_2$ be two $m$-admissible ODEs, and
$\mathcal M_1,\mathcal M_2\subset\CC{4}$ the associated foliated
submanifolds. There is a one-to-one correspondence
$F(z,w)\longrightarrow(F(z,w),G(\xi,\eta))$ between (formal)
special gauge equivalences $F(z,w)$, transforming $\mathcal E_1$
into $\mathcal E_2$, and (formal) coupled transformations
$(F(z,w),G(\xi,\eta))$, sending $\mathcal M_1$ into $\mathcal
M_2$.
\end{propos}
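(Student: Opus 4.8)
The plan is to read the asserted correspondence as the conjunction of three elementary facts about the map $F\longmapsto(F,G)$: that it is well defined, that it is injective, and that it is surjective onto the set of special gauge coupled transformations $\mathcal{M}_1\to\mathcal{M}_2$. Injectivity is immediate, since the first component of the pair recovers $F$, so the entire content sits in two places. In the forward direction one must exhibit, for each admissible $F$, a unique partner $G$ making $(F,G)$ a coupled transformation that actually lands in $\mathcal{M}_2$; in the converse direction one must recognize the $F$-component of an arbitrary special gauge coupled transformation $\mathcal{M}_1\to\mathcal{M}_2$ as a genuine gauge equivalence $\mathcal{E}_1\to\mathcal{E}_2$. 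Both arguments are meant to run verbatim in the holomorphic and the formal settings.

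For the forward direction I would start with a special gauge $F(z,w)=(zf(w),g(w))$ transforming $\mathcal{E}_1$ into $\mathcal{E}_2$, and invoke Lemma~4.5 to produce the unique special gauge $G(\xi,\eta)=(\xi\lambda(\eta),\mu(\eta))$ for which $(F,G)$ carries $\mathcal{M}_1$ into an $m$-admissible foliated submanifold $\mathcal{M}$. It then remains to identify $\mathcal{M}$ with $\mathcal{M}_2$. This uses the general observation recorded just before the statement: since $F$ sends $\mathcal{E}_1$ to $\mathcal{E}_2$ and $G$ is a gauge transformation of the parameter space, the image $\mathcal{M}$ is automatically associated with $\mathcal{E}_2$. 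Because the $m$-admissible foliated submanifold associated with a given $m$-admissible ODE is unique---the uniqueness reducing, exactly as in Proposition~3.5, to that of the solution of the Cauchy problem for \eqref{findphi2}, which persists formally by \cite{ck}---I conclude $\mathcal{M}=\mathcal{M}_2$, so that $(F,G)$ is indeed a coupled transformation $\mathcal{M}_1\to\mathcal{M}_2$.

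For the converse I would take an arbitrary special gauge coupled transformation $(F,G)$ sending $\mathcal{M}_1$ into $\mathcal{M}_2$. A recalculation of the derivatives $z_w,z_{ww}$ identical in form to the one in Proposition~4.4 shows that $F$ transfers $\mathcal{E}_1$ into \emph{some} $m$-admissible ODE $\mathcal{E}$; and since $(F,G)$ sends $\mathcal{M}_1$ onto $\mathcal{M}_2$, the submanifold $\mathcal{M}_2$ is associated with this $\mathcal{E}$. The injectivity of the correspondence $\mathcal{E}\longmapsto\mathcal{S}^{+}_{\mathcal{E}}$ from Proposition~3.5 then forces $\mathcal{E}=\mathcal{E}_2$, so $F$ is a special gauge equivalence $\mathcal{E}_1\to\mathcal{E}_2$, and its Lemma~4.5 partner is, by the uniqueness already established, exactly $G$. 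This exhibits $(F,G)$ as the image of $F$ and closes the bijection.

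The hard part will be not any single computation but making sure both uniqueness principles survive the passage to formal power series: the uniqueness of the solution of \eqref{findphi2}, needed for $\mathcal{M}=\mathcal{M}_2$, and the injectivity of $\mathcal{E}\mapsto\mathcal{S}^{+}_{\mathcal{E}}$, needed for $\mathcal{E}=\mathcal{E}_2$. The former is the formal counterpart of the Cauchy problem, guaranteed by \cite{ck}; the latter holds because relations \eqref{relation22} and \eqref{relation33} are formal identities recovering $P,Q$ from $\psi_2,\psi_3$ verbatim for formal $m$-admissible families, so their proof is unchanged. Once these two facts are in place, the remaining bookkeeping---well-definedness of the formal recalculation of the derivatives $z_w,z_{ww}$ and of the substitution into the inverse ODE---is routine.
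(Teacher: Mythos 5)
Your proposal is correct and follows essentially the same route as the paper: Lemma~4.5 supplies the unique special gauge partner $G$, the image is identified with $\mathcal M_2$ via the uniqueness of the $m$-admissible foliated submanifold associated with $\mathcal E_2$ (the formal Cauchy problem for \eqref{findphi2}), and the converse uses the derivative recalculation plus the injectivity of $\mathcal E\mapsto\mathcal S^{+}_{\mathcal E}$ through the formal validity of \eqref{relation22} and \eqref{relation33}. The only addition beyond the paper's text is your explicit closing remark that the Lemma~4.5 partner of the recovered $F$ must coincide with the given $G$, which the paper leaves implicit.
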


We are now ready to prove the main result of this section. It is a
more detailed version of Theorem~A.

\begin{thm}
For any $m\geq 2$ and $\beta\neq l(l-m+1),\,l\in\mathbb{Z}$,  the
nonminimal at the origin real hypersurface
$M^m_\beta\subset\CC{2}$, associated with the ODE $\mathcal
E^m_\beta$ as in \eqref{mainODE}, is formally equivalent at the
origin to the hypersurface $M^m_0$ by means of the formal special
gauge transformation \eqref{equivalence}, but is locally
biholomorphically inequivalent to $M^m_0$.\end{thm}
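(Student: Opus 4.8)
The plan is to prove the two assertions separately: the formal equivalence comes from Propositions 4.2, 4.4 and 4.6 together with the real structure, while the holomorphic inequivalence comes from the monodromy invariant of Proposition 4.3.

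For the formal equivalence, I would take $F$ to be the formal special gauge transformation \eqref{equivalence} from Proposition 4.2, which transfers $\mathcal E^m_\beta$ into $\mathcal E^m_0$ and is the unique such map by Proposition 4.4. Writing $\mathcal M^m_\beta,\mathcal M^m_0$ for the associated foliated submanifolds, which are exactly the complexifications of $M^m_\beta,M^m_0$ since both ODEs have a positive real structure, Proposition 4.6 lifts $F$ to a unique formal coupled transformation $(F(z,w),G(\xi,\eta)):\mathcal M^m_\beta\to\mathcal M^m_0$ with $G$ special gauge. The crux is to show $G=\bar F$. Since $(F,G)$ preserves the dual foliations, $G$ induces a special gauge equivalence of the dual ODEs; by Corollary 3.14 the positive real structure gives $(\mathcal E^m_\beta)^*=\overline{\mathcal E^m_\beta}$ and $(\mathcal E^m_0)^*=\overline{\mathcal E^m_0}$, so $G$ transfers $\overline{\mathcal E^m_\beta}$ into $\overline{\mathcal E^m_0}$. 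But $\bar F$ does exactly the same, simply by conjugating every defining relation of the equivalence $F$. The uniqueness argument of Proposition 4.4 applies verbatim to $\overline{\mathcal E^m_0}$, whose fundamental system is $\{1,\exp(\frac{-2i}{1-m}w^{1-m})\}$ and whose analysis is unchanged under $2i\mapsto -2i$; hence the special gauge equivalence $\overline{\mathcal E^m_\beta}\to\overline{\mathcal E^m_0}$ is unique, forcing $G=\bar F$. Then $(F,G)=(F,\bar F)$ is the complexification of $F$, and the complexification criterion at the end of Section~2.4 shows that $F$ maps $(M^m_\beta,0)$ formally into $(M^m_0,0)$; being invertible, $F$ is the desired formal equivalence.

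For the holomorphic inequivalence, I would argue by contradiction, assuming a local biholomorphism $H$ with $H(M^m_\beta)\subset M^m_0$. By Segre invariance its complexification $(H,\bar H)$ sends $\mathcal M^m_\beta$ into $\mathcal M^m_0$ and preserves both the Segre and the dual foliations, so it is a holomorphic coupled transformation. The next step is to reduce $H$ to special gauge form: $H$ must preserve the nonminimal (complex) locus $X=\{w=0\}$, and, using the rotational invariance $z\mapsto e^{i\theta}z$ together with sphericity of $M^m_\beta,M^m_0$ off $X$ from Theorem~3.15, one normalizes $H$ — after composing, if necessary, with structure-preserving automorphisms of $M^m_0$ — to the form $(zf(w),g(w))$ with $f(0)=1$ and $g(w)=w+O(w^{m+1})$. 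Once $H$ is special gauge, Proposition 4.6 makes it a holomorphic special gauge equivalence of $\mathcal E^m_\beta$ with $\mathcal E^m_0$; such an equivalence fixes the singular fibre $w=0$ and therefore conjugates the monodromy operators at the singularity. This contradicts Proposition 4.3, which for $\beta\neq l(l-m+1)$ gives $\mathcal E^m_\beta$ nontrivial monodromy while $\mathcal E^m_0$ has trivial monodromy, since triviality is conjugation-invariant.

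The routine inputs are Propositions 4.2--4.4 and 4.6 for the formal direction and Proposition 4.3 for the invariant, and the monodromy contradiction itself is immediate. The \emph{main obstacle} is the normalization in the previous paragraph: showing that an a priori arbitrary biholomorphism $H$ between these nonminimal spherical hypersurfaces is forced into special gauge form, so that Proposition 4.6 applies. This is exactly where the special geometry — invariance under $iz\dz$, sphericity outside $X$, and the precise degeneration of the Segre family along $X$ — must be used to control the $z$-dependence of the $w$-component of $H$ and to exclude non-fibered point transformations; equivalently, to guarantee that the monodromy of the associated ODE is a genuine biholomorphic invariant of the germ $(M,0)$ near $X$. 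I expect this reduction, rather than the monodromy computation, to be the technically demanding point.
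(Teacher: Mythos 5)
Your treatment of the formal equivalence is essentially sound and close to the paper's: the paper also reduces everything to the uniqueness statement of Proposition~4.4, though it does so by conjugating the coupled transformation $(F,G)$ with the antiholomorphic involution $\sigma(z,w,\xi,\eta)=(\bar\xi,\bar\eta,\bar z,\bar w)$ that preserves both complexifications, obtaining a second coupled special gauge transformation $(\bar G,\bar F)$ and concluding $\bar G=F$ directly, rather than routing through the dual ODEs and Corollary~3.14. Your variant needs the (unstated but plausible) fact that the parameter component $G$ of a coupled transformation is itself a special gauge equivalence of the dual ODEs, i.e.\ a mirror image of Proposition~4.6; the $\sigma$-conjugation argument avoids having to check this. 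Either way, this half of the proof goes through.

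The holomorphic inequivalence is where there is a genuine gap, and you have correctly located it yourself: your argument requires showing that an arbitrary local biholomorphism $H:(M^m_\beta,0)\lr(M^m_0,0)$ can be normalized, after composing with automorphisms of $(M^m_0,0)$, to the special gauge form $(zf(w),g(w))$ with $f(0)=1$, $g(w)=w+O(w^{m+1})$. This is not carried out, and it is not a routine step. Preservation of the complex locus $X=\{w=0\}$ only gives $g(z,0)\equiv 0$; it does not force the $w$-component to be independent of $z$, nor does it force the $z$-component to be linear in $z$, and the stability group of $(M^m_0,0)$ available for the normalization is not described anywhere in the paper. The paper deliberately avoids this entire issue: it never converts a hypothetical biholomorphism into a gauge transformation. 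Instead it observes that any $M^m_\beta$ is Levi nondegenerate and spherical off $X$, that the (multivalued) map $\Lambda$ of \eqref{maptosphere} into a quadric $\mathcal Q\subset\CP{2}$ has trivial monodromy about $X$ exactly when the ODE $\mathcal E^m_\beta$ has trivial monodromy at $w=0$, and then invokes the external result of \cite{nonminimal} that for nonminimal hypersurfaces which are spherical outside the complex locus the monodromy of the map to the quadric is a biholomorphic invariant of the germ. Combined with Proposition~4.3 this finishes the proof without ever touching the structure of a general equivalence map. So your monodromy invariant is the right one, but the mechanism by which it is shown to be a biholomorphic invariant is missing from your argument; as written, the inequivalence half does not close.
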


\begin{proof}
Consider the foliated submanifolds $\mathcal M^m_\beta$,
associated with $\mathcal E^m_\beta$. It follows from the
definitions of the associated real submanifold and the associated
foliated submanifold that $\mathcal M^m_\beta$ is the
complexification of $ M^m_\beta$. Considering now the reality
condition \eqref{realty} for $ M^m_\beta$ and complexifying it, we
conclude that $\mathcal M^m_\beta$ is invariant under the
anti-holomorphic linear mapping
$\sigma:\,\CC{4}\longrightarrow\CC{4}$ given by
\begin{equation}\label{sigma}(z,w,\xi,\eta)\longrightarrow(\bar\xi,\bar\eta,\bar z,\bar
w).\end{equation} Let now $F(z,w)$ be the formal special gauge
equivalence, provided by Proposition~4.2, and
$(F(z,w),G(\xi,\eta))$ the formal coupled  special gauge
transformation between $\mathcal M^m_\beta$ and $\mathcal M^m_0$,
provided by Proposition~4.6. Then we get that
$\sigma\circ(F(z,w),G(\xi,\eta))\circ\sigma=(\bar G(z,w),\bar
F(\xi,\eta)$ is also a formal coupled  special gauge
transformation between $\mathcal M^m_\beta$ and $\mathcal M^m_0$.
Applying now Proposition~4.4, we conclude that $G(\xi,\eta)=\bar
F(\xi,\eta)$. The latter fact immediately implies that the
transformation $(F(z,w),G(\xi,\eta))$ is the complexification of
$F(z,w)$ (see Section 2), so that $F(z,w)$ maps $M^m_\beta$ into
$M^m_0$ formally.

To prove finally the nonequivalence of  $M^m_\beta$ and $ M^m_0$
for $\beta\neq l(l-m+1),\,l\in\mathbb{Z}$, we use the fact that
each $M^m_\beta$ is Levi nondegenerate and spherical in
$M^m_\beta\setminus X$, where $X=\{w=0\}$ is the complex locus. As
it was explained in the proof of Theorem 3.15, for a fixed point
$p=(z_0,w_0)\in M^m_\beta\setminus X$ and two fixed solutions
$h_1(w),h_2(w)$ of $\mathcal E^m_\beta$ near $p$ with
$h_1(w_0)\neq 0$, one of the possible mappings $\Lambda$ of
$M^m_\beta$ into a quadric $\mathcal Q\subset\CP{2}$ is given by
\eqref{maptosphere}. Clearly, $\Lambda$ has a trivial monodromy
about the complex locus $X$ if and only if both $h_1(w),h_2(w)$
have a trivial monodromy about the origin, and the latter is
equivalent to the fact that the ODE $\mathcal E^m_\beta$ has a
trivial monodromy at $w=0$. Now the desired statement follows from
Proposition~4.3 and the fact that the monodromy of a mapping into
a quadric for a nonminimal hypersurface, Levi nondegenerate and
spherical outside the complex locus, is a biholomorphic invariant
(see \cite{nonminimal}). This completely proves the theorem.
\end{proof}

\smallskip

\noindent\bf Proof of statement (a) of Theorem C. \rm The main
step of the proof is the generalization of the constructions of
Theorem 4.7 to hypersurfaces in $\CC{N}$ with  $N\geq 3$. Fix
$m\geq 2$ and $\beta\neq l(l-m+1),\,l\in\mathbb{Z}$, and suppose
that $M^m_\beta,\ M^m_0 \subset \mathbb C^2$ are given near the
origin by the defining equations $\im w = \theta(z\bar z,\re w)$
and $\im w = \theta'(z\bar z,\re w)$. We also denote the mapping
\eqref{equivalence} by $F(z,w)=(zf(w),g(w))$ and the coordinates
in $\CC{N}$ by $z_1,...,z_{N-1},w$. Then it is not difficult to
see that the formal invertible mapping $H:\,(z_1,...,z_{N-1},w)\lr
(z_1f(w),...,z_{N-1}f(w),g(w))$ transfers the smooth real-analytic
nonminimal at the origin hypersurface $M=\{\im w=\theta(z_1\bar
z_1+...+z_{N-1}\bar z_{N-1},\re w)\}$ formally into the smooth
real-analytic nonminimal at the origin hypersurface $M'=\{\im
w=\theta'(z_1\bar z_1+...+z_{N-1}\bar z_{N-1},\re w)\}$. Since
$M^m_\beta$ and $M^m_0$ are Levi nondegenerate outside the complex
locus $\{w=0\}$, the same holds true for $M$ and $M'$, so that $M$
and $M'$ are holomorphically nondegenerate.

It can be seen from the proof of Theorem 4.7 that for any choice
of a single-valued branch of the mapping $\Lambda$, the target
quadric $\mathcal Q$, considered in the affine chart
$\CC{2}\subset\CP{2}$, is invariant under the rotations $z^*\lr
e^{it}z^*,t\in\RR{}$. Thus one can argue as in the proof of
Theorem 4.7 and consider, in the spirit of \eqref{maptosphere},
the mapping
$$\Lambda_n:\,(z_1,...,z_{N-1},w)\longrightarrow\left(\frac{z_1}{h_1(w)},...,\frac{z_{N-1}}{h_1(w)},\frac{h_2(w)}{h_1(w)}\right),$$
where $h_1(w)$ and $h_2(w)$ are some linearly independent analytic
solutions of the ODE $\mathcal E^m_\beta$ in $\CC{}\setminus
\{0\}$. Since $\Lambda$ sends a germ of $M^m_\beta$ at a Levi
nondegenerate point into a quadric $\mathcal Q\subset\CP{2}$, the
mapping $\Lambda_n$ transfers a germ of $M$ at a Levi
nondegenerate point into a \it nondegenerate \rm quadric $\mathcal
Q_N\subset\CP{N}$, obtained from $\mathcal Q$ by the substitution
of $z_1\bar z_1+...+z_{N-1}\bar z_{N-1}$ for $z\bar z$. Since
$\Lambda$ has a nontrivial monodromy,  we conclude that the
nonminimal hypersurface $M$ has a nontrivial monodromy operator in
the sense of \cite{nonminimal}. In a similar way we deduce that
the monodromy operator of the nonminimal hypersurface $M'$ is
trivial. Hence, $M$ and $M'$ are holomorphically inequivalent at
the origin. This proves the theorem in the hypersurface case.

For each class of CR-submanifolds of codimension $k\geq 2$ and
CR-dimension $n\geq 1$ we consider the holomorphically
nondegenerate CR-submanifolds $P=M\times \Pi_{k-1}$ and
$P'=M'\times \Pi_{k-1}$, where $M,M'\subset\CC{n+1}$ are chosen
from the hypersurface case and $\Pi_{k-1}\subset\CC{k-1}$ is the
totally real plane $\im W=0,W\in\CC{k-1}$. Then the direct product
of the above mapping $H$ and the identity map gives a divergent
formal equivalence between $P$ and $P'$. Finally, to show  that
$P$ and $P'$ are inequivalent holomorphically, we  denote the
coordinates in $\CC{n+k}$ by $(Z,W),\,Z\in\CC{n+1},W\in\CC{k-1}$
and note that, since $\Pi$ is totally real, for each holomorphic
equivalence
$$
(\Phi(Z,W),\Psi(Z,W)):\,(M\times \Pi_{k-1},0)\lr (M'\times
\Pi_{k-1},0) ,
$$
one has $\Psi(Z,W)=\Psi(W)$ for a vector power series $\Psi(W)$
with real coefficients and $\Psi(0)=0$. Since the initial mapping
$(\Phi(Z,W),\Psi(Z,W))$ is invertible at $0$, we conclude that the
mapping $\Phi(Z,0):\,(\CC{n},0)\lr(\CC{n},0)$ is invertible at $0$
as well, and since $(\Phi(Z,W),\Psi(W)):\,(M\times \Pi_{k-1},0)\lr
(M'\times \Pi_{k-1},0)$, the map $\Phi(Z,0)$ is a local
equivalence between $(M,0)$ and $(M',0)$. Now the desired
statement is obtained from the hypersurface case. This proves
statement (a) of the theorem.

\section{Real hypersurfaces with divergent CR-automorphisms}

As an application of the Theorem~4.7 we will show in this section
that a generic hypersurface $M^m_\beta$ from Section 4 with $m\geq
2$ has the following property: there exists a \it divergent \rm
formal vector field of the form $L = zA(w)\dz+B(w)\dw$, vanishing
to order $m$ at zero, such that its real part $\re L = L+\bar L$
is formally tangent to $M^m_\beta$. In particular, the formal flow
of $\re L$ provides generically \it divergent \rm formal
automorphisms of $(M^m_\beta,0)$.

We start with a detailed study of the real hypersurfaces
$M^m_0\subset\CC{2}$. It turns out that these hypersurfaces can be
described explicitly, namely, using elementary functions. Fix an
integer $m\geq 2$ and recall that the fundamental system of
holomorphic solutions for the ODE $\mathcal E^m_0$ is given in
$\CC{}\setminus\{0\}$ by
$\left\{1,\exp\left(\frac{2i}{1-m}w^{1-m}\right)\right\}$.
Applying \eqref{maptosphere}, we obtain that the locally
biholomorphic map
\begin{equation}\label{maptosphere0}
\Lambda: (Z,W) = \left(z,\, e^{\frac{2i}{1-m}w^{1-m}} \right)
\end{equation}
maps $\mathcal E^m_0$ into the simplest equation $Z_{WW}=0$.
Consider now the real hyperquadric
$$\mathcal Q=\left\{2|Z|^2+|W|^2=1 \right\}\subset\CC{2},$$
linearly equivalent to the standard sphere $S^3\subset\CC{2}$. We
claim that $\Lambda^{-1}(\mathcal Q)$ contains the Levi
nondegenerate part of the desired hypersurface $M^m_0$. Indeed,
the set $\Lambda^{-1}(\mathcal Q)\subset\CC{2}$ can be described
as
$$2|z|^2+e^{\frac{2i}{1-m}w^{1-m}}\cdot e^{\frac{-2i}{1-m}\bar
w^{1-m}}=1,$$ so that it contains the set
$\frac{2i}{1-m}w^{1-m}=\frac{2i}{1-m}\bar
w^{1-m}+\ln(1-2|z|^2),\,|z|^2<\frac{1}{2}$, and the union of this
real-analytic set, considered in a sufficiently small polydisc
$U\ni 0$, with the complex line $\{w=0\}$ contains the component
\begin{equation}\label{Mm0}
w=\bar w\left(1+\frac{i}{2}(1-m)\bar
w^{m-1}\ln\frac{1}{1-2|z|^2}\right)^{\frac{1}{1-m}} .
\end{equation}
Since $\Lambda$ is locally biholomorphic in
$\CC{2}\setminus\{w=0\}$, equation \eqref{Mm0} defines in the
polydisc $U\ni 0$ a smooth real-analytic nonminimal at the origin
real hypersurface $M$. As the right hand side of \eqref{Mm0} has
the expansion $\bar w+i\bar w^m |z|^2+O(\bar w^m|z|^4)$, we
conclude that $M$ is $m$-admissible. The mapping $\Lambda$ maps
locally biholomorphically each of the two sides $\{\re w>0\}$ and
$\{\re w<0\}$ of $M$ into $\mathcal Q$. Since all Segre varieties
$Q_{(A,B)}$ of $\mathcal Q$ with $A\neq 0$ satisfy the simplest
ODE $Z_{WW}=0$, and $\Lambda$ transforms the ODE $\mathcal E^m_0$
into $Z_{WW}=0$, we conclude that all Segre varieties $Q_{(a,b)}$
of $M$ with $a,b\neq 0$ satisfy the ODE $\mathcal E^m_0$. Hence
$M$ is an $m$-admissible real hypersurface, associated with
$\mathcal E^m_0$, and we finally conclude from Proposition 3.5
that $M=M^m_0$, so that the hypersurfaces $M^m_0$ are given by
\eqref{Mm0} for each $m\geq 2$.

Consider now a holomorphic vector field
$X=2iW\frac{\partial}{\partial W}\in\mathfrak{hol}(\mathcal Q)$.
Computation shows
 that its pull-back under the mapping
$\Lambda$ near each point with $w\neq 0$ equals $w^m\dw$. This
holomorphic vector field extends to the origin holomorphically,
and we conclude that
$$
L^m_0=w^m\dw\in\mathfrak{hol}(M^m_0,0).
$$
We may construct the desired divergent formal vector field,
tangent to a hypersurface $M^m_\beta$ with $m\geq 2$ and
$\beta\neq l(l-m+1),\,l\in\mathbb{Z}$, by pulling back the vector
field $L^m_0$ with the invertible formal mapping
\eqref{equivalence} (we denote it by $\Phi$ in what follows).
Since the real flow $F^t$ of the vector field $L^m_0$ preserves
$(M^m_0,0)$, and $\Phi$ formally transforms $(M^m_\beta,0)$ into
$(M^m_0,0)$, the well-defined real flow $H^t:=\Lambda\circ F^t
\circ \Lambda^{-1}$ preserves formally $(M^m_\beta,0)$, and the
derivation of $H^t$ at $t=0$ gives a formal vector field
$L^m_\beta$ such that its real part is formally tangent to
$(M^m_\beta,0)$. As follows from the construction, $L^m_\beta$ can
be obtained from $L^m_0$ using the usual chain rule. Since $L^m_0$
vanishes to order $m$, we conclude that the same holds for
$L^m_\beta$. Using the facts that
$\Phi(z,w)=(z\chi(w),\tau(w)),\,\chi(w)=1+O(w),\tau(w)=w+O(w^{m+1})$
(see Proposition 4.2), we finally calculate
\begin{equation}\label{Lmbeta}
L^m_\beta=-\frac{\chi'\tau^m}{\chi
\tau'}z\dz+\frac{\tau^m}{\tau'}\dw=A(w)z\dz+B(w)\dw.
\end{equation}

Below we formulate the main result of this section, which is a
detailed formulation of  Theorem~B.

 \begin{thm}
For any $m\geq 2$ and $\beta\neq l(l-m+1),\,l\in\mathbb{Z}$, the
germ $(M^m_\beta,0)$ admits a divergent formal infinitesimal
automorphism $L^m_\beta$ given by \eqref{Lmbeta} and vanishing to
order $m$. The maps $\chi$ and $\tau$ defined
by~\eqref{equivalence}. The real formal flow $F_t(z,w)$, generated
by $L^m_\beta$, consists of divergent formal automorphisms of
$(M^m_\beta,0)$ for all $t\in\RR{}\setminus C$, where $C$ is a
cyclic subgroup in $(\RR{},+)$.
 \end{thm}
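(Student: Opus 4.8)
The facts that $L^m_\beta$ is a formal infinitesimal automorphism of $(M^m_\beta,0)$ given by \eqref{Lmbeta} and that it vanishes to order $m$ have already been obtained in the discussion preceding the theorem, so the plan is to establish the two remaining assertions: that $L^m_\beta$ is divergent, and that its real flow $H^t=\Phi^{-1}\circ F^t\circ\Phi$ converges exactly for $t$ in a cyclic subgroup $C$. Here $\Phi=(z\chi(w),\tau(w))$ is the formal special gauge map \eqref{equivalence} and $F^t$ is the genuinely holomorphic real flow of $L^m_0=w^m\dw$.

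For the divergence I would argue by contradiction. Suppose $L^m_\beta=A(w)z\dz+B(w)\dw$ converges, so that both $A$ and $B$ are convergent power series. From \eqref{Lmbeta} one has $B=\tau^m/\tau'$ and $A=-(\ln\chi)'\,B$. The first relation is equivalent to $(\tau^{1-m})'=(1-m)/B$; since $\tau=w+O(w^{m+1})$ fixes both the integration constant and the principal part $w^{1-m}$, and since $\tau$ is a power series (so no logarithmic term can occur), convergence of $B$ forces $\tau^{1-m}$, hence $\tau$, to converge. The second relation gives $(\ln\chi)'=-A/B$, whence convergence of $A,B$ forces $\chi$ (with $\chi(0)=1$) to converge as well. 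Then $\Phi$ would be a genuine biholomorphism, and by Theorem~4.7 a holomorphic equivalence between $M^m_\beta$ and $M^m_0$, contradicting their biholomorphic inequivalence for $\beta\ne l(l-m+1)$. Hence $L^m_\beta$ diverges.

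For the flow the plan is to transport everything to the quadric $\mathcal Q=\{2|Z|^2+|W|^2=1\}$ through the developing map $\Lambda$ of \eqref{maptosphere0}, under which $F^t$ becomes the rotation $g^t:\,W\mapsto e^{2it}W$, of period $\pi$. Because $\Phi$ is a \emph{divergent} formal normalization of the irregular equation $\mathcal E^m_\beta$, I would replace it by its genuine sectorial (Borel) normalizations $\Phi_j$, which are holomorphic on sectors at $w=0$ and are glued to one another by the Stokes matrices of $\mathcal E^m_\beta$. In each sector $H^t=\Phi_j^{-1}\circ F^t\circ\Phi_j$ is holomorphic, and these sectorial pieces glue to a single-valued map extending across $X=\{w=0\}$, giving a convergent automorphism, if and only if $g^t$ commutes with every Stokes matrix. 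In the basis diagonalizing the formal monodromy the Stokes matrices are off-diagonal and unipotent and, by Proposition~4.3, nontrivial precisely when $\beta\ne l(l-m+1)$; a direct computation then shows that $g^t=\mathrm{diag}(1,e^{2it})$ commutes with them exactly when $e^{2it}=1$. Thus $H^t$ converges precisely for $t\in C:=\pi\mathbb{Z}$, a cyclic subgroup of $(\RR{},+)$, and is divergent for all $t\in\RR{}\setminus C$, as asserted.

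The main obstacle is this last step: passing from the single divergent conjugacy $\Phi$ to the sectorial conjugacies $\Phi_j$ and identifying the obstruction to convergence of $H^t$ with the failure of the rotation $g^t$ to commute with the nontrivial Stokes data. This is exactly where the analytic theory of irregular singularities enters, and one must check both that the Stokes matrices are nontrivial for $\beta\ne l(l-m+1)$ and that the rotation commutes with them only when $e^{2it}=1$; the divergence of $L^m_\beta$ established above is, in effect, the infinitesimal shadow of this same Stokes obstruction.
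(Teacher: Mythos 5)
Your first step (divergence of $L^m_\beta$) is correct and is essentially the paper's argument run in the opposite direction. The paper proves directly that $\tau$ diverges: it forms the formal Wronskian $D=z_2'z_1-z_1'z_2$ of the two formal solutions $f_\beta$ and $g_\beta w^{1-m}\exp\bigl(\tfrac{2i}{1-m}w^{1-m}\bigr)$, uses the Liouville--Ostrogradsky formula to see that $D$ is convergent, and deduces that convergence of $g_\beta/(w^{m-1}f_\beta)$ would force convergence of the whole map \eqref{equivalence}, contradicting Proposition~4.3; then $B=(1-m)/(\tau^{1-m})'$ diverges. You instead assume $A$ and $B$ both converge and integrate back to convergence of $\tau$ and $\chi$, contradicting Theorem~4.7. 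Both are fine, but note the difference in strength: the paper obtains that $B$ \emph{itself} is divergent, while your contrapositive only yields that $A$ and $B$ are not both convergent. This matters below.

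For the flow, your route is genuinely different from the paper's and, as written, has a gap at its central step. The paper reduces to the scalar field $Y=B(w)\dw$ (legitimate because the $w$-component of $L^m_\beta$ depends only on $w$), uses that $B$ is divergent, and invokes the Ecalle--Voronin theory of parabolic germs: any convergent element $H^{t_0}$, $t_0\neq0$, of the formal flow of a divergent formal parabolic field is a nonembeddable germ with nontrivial Ecalle--Voronin invariants, and the set of convergent elements of the flow, being contained in its centralizer, lies in a cyclic subgroup. Your Stokes-matrix picture is attractive and would even identify $C$ as $\pi\mathbb Z$, but the biconditional ``$H^t$ converges iff $g^t$ commutes with every Stokes matrix'' is asserted, not proved, and the direction you actually need for the theorem --- convergence of $H^t$ forces the sectorial pieces $\Phi_j^{-1}\circ F^t\circ\Phi_j$ to agree on overlaps --- requires a uniqueness statement for sectorial conjugacies (two sectorial maps asymptotic to the same series need not coincide) that you do not supply; nor do you construct the sectorial normalizations for the coupled, nonlinear object (the hypersurface/foliated submanifold) rather than for the linear ODE alone. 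Two further points: the inference ``nontrivial monodromy $\Rightarrow$ nontrivial Stokes data'' is valid here only because the formal monodromy of \eqref{normalform} is trivial ($1-m\in\mathbb Z$), which should be said explicitly; and if you wished to fall back on the paper's Ecalle--Voronin argument instead, your first step would first have to be sharpened to show that $B$ specifically diverges (which follows from showing $\tau$ diverges, e.g.\ by the Wronskian argument), since divergence of $A$ alone would leave the flow of $B(w)\dw$ convergent.
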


 \begin{proof}
The proof is based on the detailed analysis of the proof of
Proposition~4.2. First, we show that the formal power series
$B(w)$ in \eqref{Lmbeta} is divergent. We denote by $\mathbb
C[[w]]$ the algebra of formal power series in $w$ and by
$\Upsilon$ the linear space of formal series of the form
$f(w)w^{-m}\exp\left(\frac{2i}{1-m}w^{1-m}\right)$, where $f(w)\in
\mathbb C[[w]]$. Recall that $z_1(w)=f_\beta(w)\in \mathbb C[[w]]$
and $z_2(w)=g_\beta(w)\cdot
w^{1-m}\cdot\exp\left(\frac{2i}{1-m}w^{1-m}\right)\in \Upsilon$
form the fundamental system of formal solutions for $\mathcal
E^m_\beta$. It is not difficult to verify, by combining the facts
that $z_1(w)$ and $z_2(w)$ satisfy the ODE $\mathcal E^m_\beta$,
that for the well-defined formal Wronskian
$D(w)=z_2'z_1-z_1'z_2\in \Upsilon$ the classical
Liouville-Ostrogradsky formula holds:
\begin{equation} \label{lo}
D'(w)=\left(\frac{2i}{w^m}-\frac{m}{w}\right)D(w) .
\end{equation}
Since $D(w)\in \Upsilon$, we obtain from \eqref{lo} that
$D(w)=C_0w^{-m}\exp\left(\frac{2i}{1-m}w^{1-m}\right),\,C_0\in\CC{}$,
so that the element $D(w)\in \Upsilon$ is convergent. We claim
that the ratio $\frac{g_\beta(w)}{w^{m-1}f_\beta(w)}\in \mathbb
C[[w]]$ is divergent. For otherwise, we conclude that
$\frac{z_2(w)}{z_1(w)}\in \Upsilon$ is convergent as well, and get
from the relation
$(z_1(w))^2\left(\frac{z_2(w)}{z_1(w)}\right)'=D(w)$ that $z_1(w)$
is convergent, and hence that the mapping \eqref{equivalence} is
convergent, which contradicts Proposition 4.3. Now, from the
definition of $\tau(w)$, we conclude that $\tau(w)$ is divergent,
and \eqref{Lmbeta} shows that $B(w)=(1-m)/(\tau^{1-m})'$ is
divergent, which proves the divergence of the vector field
$L^m_\beta$.

Finally, to prove the divergence of a generic transformation in
the flow of $L^m_\beta$ we consider the one-dimensional divergent
formal vector field $Y=B(w)\dw$, vanishing to order $m$. We then
apply to $Y$ the theory of Ecalle-Voronin (we refer to
\cite{ilyashenko} for details). Denote by $H^t(w)$ the formal flow
of $Y$, and assume that it contains a convergent transformation
$H^{t_0}(w),\,t_0\neq 0$. In the terminology of \cite{ilyashenko},
the convergent transformations in $H^t(w)$ with $t\neq 0$ are \it
parabolic germs, \rm and, as the vector field $Y$ is divergent,
$H^{t_0}(w)$ is a nonembeddable parabolic germ (its Ecalle-Voronin
invariants are nontrivial). As any convergent transformation in
$H^t(w)$ commutes with $H^{t_0}(w)$, it necessarily lies in the
centralizer of $H^{t_0}(w)$, and it follows from the
Ecalle-Voronin theory that the set $\{t\in\CC{}:\, H^t(w)\,\,
\mbox{is convergent}\}$ is contained in a cyclic subgroup of
$(\RR{},0)$, generated by some $c\in\RR{}$. Now the desired
divergence statement follows from the simple relation
\eqref{Lmbeta} between $L^m_\beta$ and $Y$. The theorem is
completely proved now.
\end{proof}

\begin{proof}[Proof of statement (b) of Theorem C.]
The arguments of the proof are similar to those of the  proof of
statement (a) in Theorem C (see Section 4). We fix $m\geq 2$,
$\beta\neq l(l-m+1),\,l\in\mathbb{Z}$, and $N\geq 3$. Arguing
identically to the proof of statement (a), we construct, using the
real hypersurface $M^m_\beta,$ a smooth real-analytic nonminimal
at the origin holomorphically nondegenerate hypersurface
$M\subset\CC{N}$. Then it is not difficult to see from the fact
that the real part of the divergent formal vector field
$L^m_\beta=A(w)z\dz+B(w)\dw$ is formally tangent to $M^m_\beta$
that the real part of the divergent formal vector field
$L=A(w)\left(z_1\frac{\partial}{\partial
z_1}+...+z_{N-1}\frac{\partial}{\partial z_{N-1}}\right)+B(w)\dw$
is formally tangent to $M$. The vector field $L$ vanishes to order
$m$. The divergence statement for the elements of the real flow of
$L$ can be verified in the same way as in the proof of Theorem
5.1. This completely proves Theorem~C.
\end{proof}

Note that Corollary~1.1 follows directly from Theorem C.

\begin{rema} As can be verified, for example, from \cite{yel}, solutions of
the ODEs $\mathcal E^m_\beta$ with arbitrary $\beta\in\RR{}$ can
be described using the Bessel functions. Accordingly, it is
possible to follow the above method and describe the real
hypersurfaces $M^m_\beta$ in terms of Bessel functions. However,
the required computations are quite involved and we do not provide
them here.\end{rema}

\medskip

In conclusion we would like to formulate some of open questions.
The first one concerns the holomorphic and formal isotropy
dimensions (see the Introduction) for a Levi nonflat hypersurface
$M\subset\CC{2}$. The investigation of these two characteristics
of a real hypersurface goes back to Poincare~\cite{poincare}, who
proved the bound $\mbox{dim}\,\mathfrak{aut}\,(M,0)\leq 5$ for the
holomorphic isotropy dimension of a Levi nondegenerate
hypersurface. Combining the known results in the holomorphic
category with the convergence results in \cite{ber1},\cite{jl2},
one can deduce the bounds $\mbox{dim}\,\mathfrak{aut}\,(M,0)\leq
5,\,\mbox{dim}\,\mathfrak{f}\,(M,0)\leq 5$ for all  minimal
hypersurfaces, \rm as well as for 1-nonminimal ones. In the
upcoming paper~\cite{KS-new} the authors prove the bound
$\mbox{dim}\,\mathfrak{aut}\,(M,0)\leq 5$ for an  arbitrary \rm
Levi nonflat hypersurface. Somewhat surprisingly, for the formal
isotropy dimension even its finiteness does not seem to follow
from any known results. As Theorem B shows, the formal and
holomorphic dimensions do not coincide in general, so that the
bound $\mbox{dim}\,\mathfrak{f}\,(M,0)\leq 5$ can not be verified
from the holomorphic case. This leads to the following

\medskip

\noindent{\bf Conjecture 5.3.} The bound
$\mbox{dim}\,\mathfrak{f}\,(M,0)\leq 5$ holds  for an arbitrary
real-analytic Levi nonflat germ $(M,0)\subset\CC{2}$, in
particular, $\mbox{dim}\,\mathfrak{f}\,(M,0)< \infty$.

\medskip

The above question becomes even more delicate if one considers the
isotropy group $\mbox{Aut}\,(M,0)$ as well as the formal isotropy
group $\mathcal F(M,0)$. The group structure results in
\cite{jl1},\cite{jl2} were obtained in the settings where {\it a
posteriori} $\mbox{Aut}\,(M,0)= \mathcal F(M,0)$ and
$\mathfrak{aut}\,(M,0)=\mathfrak{f}\,(M,0)$. Since the
$m$-nonminimal case with $m\geq 2$ is significantly different in
the sense that  $\mbox{Aut}\,(M,0)\subsetneq \mathcal F(M,0)$ and
$\mathfrak{aut}\,(M,0)\subsetneq\mathfrak{f}\,(M,0)$ in general,
 it is interesting to establish a
connection between the objects $\mathfrak{aut}\,(M,0)$,
$\mathfrak{f}\,(M,0)$, $\mbox{Aut}\,(M,0)$ and $\mathcal F(M,0)$,
as well as the group structures for $\mbox{Aut}\,(M,0)$ and
$\mathcal F(M,0)$ in the case $m\geq 2$.


\end{document}